\newtheorem{thm}{Theorem}[section]
\newtheorem*{thm*}{Theorem}
\newtheorem{lem}[thm]{Lemma}
\newtheorem{cor}[thm]{Corollary}
\theoremstyle{definition}
\newtheorem{defn}[thm]{Definition}
\theoremstyle{remark}
\newtheorem{rem}[thm]{Remark}
\numberwithin{equation}{section}
\newcommand{\pt}{\mathrm{pt}}
\newcommand{\Sy}{\Sigma^{(2)}}
\newcommand{\Sg}{\Sigma}
\newcommand{\barl}{\bar\ell}
\newcommand{\rH}{\Xi}
\newcommand{\Qo}{{\mathop{Q}\limits^\circ}}
\newcommand{\Po}{{\mathop{P}\limits^\circ}}
\DeclareMathOperator{\Sqe}{Sq_e}
\DeclareMathOperator{\dist}{dist}
\renewcommand{\epsilon}{\varepsilon}
\begin{document}

\title[Regular embeddings of manifolds \dots]{Regular embeddings of manifolds and topology of configuration spaces}

\author{R.N.~Karasev}
\thanks{This research is supported by the Dynasty Foundation, the President's of Russian Federation grant MK-113.2010.1, the Russian Foundation for Basic Research grants 10-01-00096 and 10-01-00139, the Federal Program ``Scientific and scientific-pedagogical staff of innovative Russia'' 2009--2013}

\email{r\_n\_karasev@mail.ru}
\address{
Roman Karasev, Dept. of Mathematics, Moscow Institute of Physics
and Technology, Institutskiy per. 9, Dolgoprudny, Russia 141700}

\keywords{regular embeddings, configuration spaces, Chebyshev systems}

\subjclass[2000]{41A50, 55M35, 55R25, 55R80, 57R40}

\begin{abstract}
For a topological space $X$ we study continuous maps $f : X\to \mathbb R^m$ such that images of every pairwise distinct $k$ points are affinely (linearly) independent. Such maps are called affinely (linearly) $k$-regular embeddings. 

We investigate the cohomology obstructions to existence of regular embeddings and give some new lower bounds on the dimension $m$ as function of $X$ and $k$, for the cases $X$ is $\mathbb R^n$ or $X$ is an $n$-dimensional manifold. In the latter case, some nonzero Stiefel--Whitney classes of $X$ help to improve the bound.
\end{abstract}

\maketitle

\section{Introduction}

Let $X$ be a topological space. We study continuous maps $f : X\to \mathbb R^m$ such that images of every pairwise distinct $k$ points are affinely (linearly) independent. We call such maps \emph{affinely (linearly) $k$-regular embeddings}. This concept was introduced in~\cite{bors1957}, it is closely related to some questions of approximation by a system of functions (the Chebyshev approximation), see~\cite{coha1978} for detailed explanations.

For a given space $X$ there are some lower bounds on the dimension $m$, and there are also some existence theorems for large enough $m$. In this paper we consider lower bounds for the cases $X=\mathbb R^n$ or $X$ is some other $n$-dimensional manifold. Obviously, the lower bounds for the case of $\mathbb R^n$ give lower bounds for any $n$-manifolds. In~\cite{borysha1960} the first lower bound from the dimension considerations (nevertheless, nontrivial) was made:

\begin{thm*}[Boltyanskii--Ryshkov--Shashkin, 1960]
If there is an affinely $k$-regular map from $\mathbb R^n$ to $\mathbb R^m$ then
$$
m \ge \left\lfloor\frac{k}{2}\right\rfloor n + \left\lfloor\frac{k-1}{2}\right\rfloor.
$$
\end{thm*}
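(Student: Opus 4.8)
The plan is to combine a ``collapsing cluster'' reduction with induction on $k$, passing from the bound for $k$ to the bound for $k-2$. Writing $s=\lfloor k/2\rfloor$, I will show that the existence of an affinely $k$-regular map $f\colon\mathbb R^n\to\mathbb R^m$ forces the existence of an affinely $(k-2)$-regular map $\mathbb R^n\to\mathbb R^{m-n-1}$. Granting this, together with the base cases $k\le 2$ — for $k=2$, injectivity of $f$ and invariance of domain give $m\ge n=\lfloor 2/2\rfloor n+\lfloor 1/2\rfloor$, and $k\le 1$ is vacuous — the bound follows by iteration, since
\[
\left\lfloor\tfrac{k}{2}\right\rfloor n+\left\lfloor\tfrac{k-1}{2}\right\rfloor=(n+1)+\left\lfloor\tfrac{k-2}{2}\right\rfloor n+\left\lfloor\tfrac{k-3}{2}\right\rfloor .
\]

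To carry out the inductive step, fix $p\in\mathbb R^n$ and let $\Sigma_p\subseteq S^{m-1}$ be the set of all limits of $\bigl(f(y)-f(z)\bigr)/\|f(y)-f(z)\|$ as $y\ne z$ tend to $p$; put $W_p=\lin\Sigma_p\subseteq\mathbb R^m$. Two facts are needed. First, $\dim W_p\ge n$: for $\epsilon$ small enough no secant of $f$ inside $B_\epsilon(p)$ is orthogonal to $W_p$ — otherwise its unit direction would lie in $W_p^\perp$ while being realized by points arbitrarily close to $p$, hence would accumulate to a point of $\Sigma_p\subseteq W_p$ — so the orthogonal projection onto $W_p$ is injective on $f(B_\epsilon(p))$, composing with $f$ to a continuous injection of an open $n$-ball into $W_p$, and invariance of domain gives $\dim W_p\ge n$. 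Second, for any distinct $x_1,\dots,x_{k-2}\in\mathbb R^n\setminus\{p\}$ the vectors $f(x_1)-f(p),\dots,f(x_{k-2})-f(p)$ are linearly independent modulo $W_p$: one applies $k$-regularity to the $k$ points $p$, a point $p'$ near $p$, and $x_1,\dots,x_{k-2}$, observes that affine independence of the images is equivalent (by elementary row reduction) to linear independence in $\mathbb R^m$ of $f(p')-f(p)$ together with the $f(x_i)-f(p)$, and lets $p'\to p$ along all directions so that $\bigl(f(p')-f(p)\bigr)/\|f(p')-f(p)\|$ runs over a set with linear span $W_p$. With both facts available, the map $x\mapsto f(x)-f(p)\bmod W_p$, restricted to a small open ball $B\subseteq\mathbb R^n\setminus\{p\}$ on which it takes values in an open half-space of $\mathbb R^m/W_p$ not containing $0$, and then followed by the central projection of that half-space from the origin onto an affine hyperplane, is an affinely $(k-2)$-regular map from $B\cong\mathbb R^n$ to $\mathbb R^{m-\dim W_p-1}$; since $\dim W_p\ge n$, this space is contained in $\mathbb R^{m-n-1}$, which closes the induction.

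The step I expect to be the main obstacle is the second fact, and more precisely the passage to the limit $p'\to p$: the collapsing configuration tends to a degenerate one, and since linear independence is not a closed condition one cannot conclude naively that the limiting tangent directions at $p$, together with the vectors $f(x_i)-f(p)$, remain independent. Making this rigorous for a merely continuous $f$ is the technical heart of the proof: one has to show the degeneration is essentially first order — for instance by passing to a suitable compactification of the configuration space of $\mathbb R^n$ on which $f$ induces a map compatible with the collision strata, or, in the spirit of the original argument, by exploiting that $f$ is an embedding near $p$ so that the relevant difference quotients stay within a compact family and a limiting direction can always be chosen avoiding any prescribed finite list of ``bad'' subspaces. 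The remaining ingredients — the row reduction identifying affine with linear independence, the central-projection trick converting linear $j$-regularity in $\mathbb R^N$ into affine $j$-regularity in $\mathbb R^{N-1}$, and the arithmetic of the recursion — are routine.
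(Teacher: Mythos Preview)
The paper does not give its own proof of this theorem---it is quoted as a classical result from \cite{borysha1960}---but later, in the section on the tangent bundle of $P_q(M)$, it recalls the original method: from an affinely $2q$-regular map $f\colon\mathbb R^n\to\mathbb R^m$ one produces a \emph{continuous injection} $F_q(\mathbb R^n)\times D^{q-1}\to\mathbb R^m$, essentially $((x_1,\dots,x_q),(t_1,\dots,t_q))\mapsto\sum t_i f(x_i)$ with the $t_i$ ranging over an open simplex with $t_1>\dots>t_q$; injectivity is immediate from $2q$-regularity (an affine dependence among at most $2q$ images forces all coefficients to vanish), and invariance of domain then gives $m\ge nq+q-1$ in one stroke. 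No limits and no collapsing configurations are involved.

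Your inductive route is genuinely different, and the arithmetic of the recursion and the base cases are correct. However, the gap you yourself flag is real and is not closed by the fixes you propose. The ``second fact''---that the vectors $f(x_i)-f(p)$ stay independent modulo $W_p=\lin\Sigma_p$---does \emph{not} follow from letting $p'\to p$: linear independence is an open condition, so knowing that $\bigl(f(p')-f(p),f(x_1)-f(p),\dots\bigr)$ is independent for each $p'$ says nothing about the limiting direction, let alone about the span of all such limits. Your suggested repairs do not help. A Fulton--MacPherson--type compactification carries an induced map only for at least $C^1$ maps, not for merely continuous $f$. The ``choose a limiting direction avoiding bad subspaces'' idea runs against your own first fact: that argument needs $W_p$ to contain \emph{every} limit secant direction, so you cannot freely shrink $W_p$; and even if some $\sigma\in\Sigma_p$ avoids $V=\mathrm{span}\{f(x_i)-f(p)\}$, this only gives $W_p\not\subseteq V$, whereas you need $V\cap W_p=0$. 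For a wild continuous $k$-regular map one may well have $\dim W_p>n$, and nothing you have written prevents $V$ from meeting $W_p$ nontrivially. The one-shot barycentric construction above bypasses all of these issues and is strictly simpler.
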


Then some essentially topological methods were applied to improve this lower bound. In the paper~\cite{chis1979} (the case of $n=2$ was considered previously in~\cite{coha1978}) this bound was improved in the case $n=2^l$. 

\begin{defn}
Denote by $\alpha_p(n)$ the sum of digits in $p$-adic representation of an integer $n$.
\end{defn}

\begin{thm*}[Chisholm, 1979]
If there is a linearly $k$-regular map from $\mathbb R^n$ to $\mathbb R^m$, where $n$ is a power of two, then
$$
m \ge n(k - \alpha_2(k)) + \alpha_2(k).
$$
\end{thm*}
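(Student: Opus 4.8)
The plan is to turn linear $k$-regularity into the vanishing of a Stiefel--Whitney class on a configuration space, and then to show that this class is in fact nonzero in the predicted degree. Let $F(\mathbb R^n,k)$ be the configuration space of ordered $k$-tuples of pairwise distinct points of $\mathbb R^n$, with its free $S_k$-action. For any subgroup $G\le S_k$ and any $G$-invariant subspace $T\subseteq F(\mathbb R^n,k)$, the assignment $(x_1,\dots,x_k;\sum c_ie_i)\mapsto(x_1,\dots,x_k;\sum c_if(x_i))$ (in the standard basis $e_1,\dots,e_k$ of $\mathbb R^k$, permuted by $G$) is a $G$-equivariant bundle map from the trivial bundle with fibre $\mathbb R^k$ to the trivial bundle with fibre $\mathbb R^m$ over $T$, and linear $k$-regularity of $f$ is precisely the statement that it is injective on every fibre. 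Passing to $T/G$ it descends to an injection $\xi\hookrightarrow\underline{\mathbb R^m}$, where $\xi$ is the rank-$k$ bundle associated with the permutation representation; the cokernel has rank $m-k$, so its Stiefel--Whitney classes vanish in degrees $>m-k$, and since its total class equals $w(\xi)^{-1}=\bar w(\xi)$ we conclude that $\bar w_j(\xi)=0$ in $H^j(T/G;\mathbb F_2)$ for all $j>m-k$. Hence it suffices to exhibit a pair $(G,T)$ with $\bar w_j(\xi)\neq 0$ for $j=(n-1)\bigl(k-\alpha_2(k)\bigr)$, since this forces $m\ge k+(n-1)(k-\alpha_2(k))=n(k-\alpha_2(k))+\alpha_2(k)$.

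Next I would reduce to a single power of two. Write $k=2^{a_1}+\dots+2^{a_s}$ in binary ($s=\alpha_2(k)$) and take $G=W_{a_1}\times\dots\times W_{a_s}$, where $W_a=\mathbb{Z}_2\wr\dots\wr\mathbb{Z}_2$ ($a$ factors) is the Sylow $2$-subgroup of $S_{2^a}$. For each $a$ I realize $(S^{n-1})^{2^a-1}$ as a $W_a$-invariant compact submanifold $T_a\subseteq F(\mathbb R^n,2^a)$ --- the ``binary tree'' configurations obtained from one point by an $a$-fold $\pm$-bifurcation, with one unit direction vector attached to each of the $2^a-1$ internal nodes and with fixed, rapidly decreasing bifurcation scales (chosen fast enough that the $2^a$ leaves stay pairwise distinct for every choice of direction vectors). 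Then $W_a$ acts freely --- as $\mathbb{Z}_2\wr W_{a-1}$ it acts antipodally on the root sphere and swaps the two subtrees --- and $Z_a:=T_a/W_a$ is the $a$-fold iterated quadratic construction $Z_a=S^{n-1}\times_{\mathbb{Z}_2}(Z_{a-1}\times Z_{a-1})$, $Z_0=\mathrm{pt}$: a closed connected manifold of dimension $(n-1)(2^a-1)$, with $Z_1=\mathbb{RP}^{n-1}$. Over $Z_a$ the rank-$2^a$ permutation bundle becomes the correspondingly iterated quadratic construction $\eta_a$ of the trivial line bundle, so $\eta_1=\underline{\mathbb R}\oplus\gamma$ over $\mathbb{RP}^{n-1}$ with $\gamma$ the tautological line bundle. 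Placing the $s$ blocks far apart, for $T=\prod_iT_{a_i}$ we get $T/G=\prod_iZ_{a_i}$ and $\xi|_T=\bigoplus_i\eta_{a_i}$, hence $\bar w(\xi|_T)=\prod_i\bar w(\eta_{a_i})$; by the K\"unneth formula the top class of $\prod_iZ_{a_i}$ is the product of the top classes, so the whole theorem reduces to the single assertion that $\bar w(\eta_a)$ contains the fundamental class of $Z_a$, i.e.\ $\bar w_{(n-1)(2^a-1)}(\eta_a)\neq0$.

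To prove that I would first record the behaviour of Stiefel--Whitney classes under the quadratic construction $E\mapsto E\wr2$ over $Y\mapsto S^{n-1}\times_{\mathbb{Z}_2}Y^2$: it is additive in $E$, and the class of $E\wr2$ is determined by $w(E)$, its Steenrod squares, and the class $t\in H^1$ coming from $\mathbb{RP}^{n-1}=S^{n-1}/\mathbb{Z}_2$ (on a line bundle with $w_1=x$ it is the multiplicative transfer, or ``norm'', of $1+x$). Running this recursively from $w(\eta_1)=1+t$ over $\mathbb{RP}^{n-1}$ --- where already $\bar w(\eta_1)=1+t+\dots+t^{n-1}$ has nonzero top term $t^{n-1}$ --- one computes $w(\eta_a)$, hence $\bar w(\eta_a)$, and verifies by induction on $a$ that the coefficient of the top monomial remains $1$. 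This is exactly the step that uses $n=2^l$: for $n-1=2^l-1$ all binomial coefficients $\binom{n-1}{i}$, $0\le i\le n-1$, are odd (Lucas' theorem), so the Steenrod operations occurring in the quadratic-construction formula act on $H^*(\mathbb{RP}^{n-1};\mathbb F_2)$ favourably enough for the top class to survive every stage of the recursion; for general $n$ the relevant coefficients may vanish and this fails. Granting the nonvanishing, the bound $m\ge n(k-\alpha_2(k))+\alpha_2(k)$ follows.

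The main obstacle is this last computation: propagating the top-degree class through the iterated quadratic construction. The Cartan--Leray (Serre) spectral sequences of $S^{n-1}\times_{\mathbb{Z}_2}(Z_{a-1}\times Z_{a-1})\to\mathbb{RP}^{n-1}$ carry nontrivial differentials --- the familiar phenomenon that Steenrod squares appear in the cohomology of an extended square --- so a priori the top class of $Z_a$ need not be detected by $\bar w(\eta_a)$, and one must pin down $\bar w(\eta_a)$ precisely enough, using the known structure of $H^*(Z_a;\mathbb F_2)$ (equivalently of $H^*(BW_a;\mathbb F_2)$, computed by Nakaoka). It is precisely the hypothesis that $n$ is a power of two that makes the attendant binomial-coefficient bookkeeping come out in one's favour. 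A minor point needing care is the construction of $T_a$ itself: the bifurcation scales must decrease fast enough that $T_a$ genuinely lies in $F(\mathbb R^n,2^a)$ for every configuration of direction vectors.
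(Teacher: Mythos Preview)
Your approach is essentially the same as the paper's: the same binary-tree configuration subspaces $T_a=\Qo_{2^a}(\mathbb R^n)$, the same iterated wreath product $W_a=\Sy_{2^a}$, the same product-of-blocks reduction, and the same target nonvanishing statement $\bar w_{(n-1)(2^a-1)}(\eta_a)\neq 0$. Where you diverge is only in how you propose to finish that last computation, and here the paper's route is cleaner than the recursive bookkeeping you sketch.

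The paper observes (Lemma~\ref{Qq-fro}) that the Frobenius map $x\mapsto x^N$ vanishes on $\tilde H^*(\Po_q(\mathbb R^n))$ for $N=N(n)$, so $w(\alpha_q)^N=1$ and hence $\bar w(\alpha_q)=w(\alpha_q)^{N-1}$. When $n$ is itself a power of two, $N=n$ and the top component of $w(\alpha_q)^{n-1}$ is exactly $e(\alpha_q)^{n-1}$, which is nonzero by the geometric Lemma~\ref{Qq-euler}. This replaces your inductive tracking of the top monomial through the quadratic construction by a one-line identity plus a nonvanishing fact proved independently. Your intuition about Lucas' theorem is precisely what makes $N=n$ here.

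One small correction: the Serre spectral sequence for $K\times K\to (K\times K\times S^{n-1})/\mathbb Z_2\to\mathbb RP^{n-1}$ actually \emph{collapses} at $E_2$ (Lemma~\ref{ext-sq}); the appearance of Steenrod squares in the extended-square cohomology is a multiplicative/extension phenomenon, not a matter of nontrivial differentials. So the obstacle you flag is milder than you fear, and the structure of $H^*(Z_a;\mathbb F_2)$ is fully described by Lemma~\ref{ext-sq} without invoking Nakaoka.
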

\begin{rem}
This theorem was also rediscovered in~\cite{vas1992,vas1994}.
\end{rem}

The Chisholm theorem was established by considering some equivariant maps, and this method does not distinguish between the affine and linear cases (see Lemma~\ref{inv-bundle} below). From here on we state the results for linearly $k$-regular maps, noting that for affinely $k$-regular maps the lower bound is $1$ less. The Chisholm theorem gives a good estimate for the growth of $m$ as a function of $k$, since every $n$-manifold can be linearly $k$-regularly mapped to $\mathbb R^m$ with $m=(n+1)k+1$ (see~\cite{ha1996,vas1994} for the explanation of the upper bound, in fact every ``general position'' map in such dimension is $k$-regular). 

For the case of manifolds other than $\mathbb R^n$ there is a result from~\cite{ha1996}, using characteristic classes of the manifold:

\begin{thm*}[Handel, 1996]
Suppose $M$ is an $n$-dimensional manifold, $k$ is even, and suppose that the $d$-th dual Stiefel--Whitney class of $M$ is nonzero. If there is a $k$-regular map of $M$ to $\mathbb R^m$ then 
$$
m \ge \frac{k}{2}(n+d+1).
$$
Moreover, if $M$ is compact then
$$
m \ge \frac{k}{2}(n+d+1) + 1.
$$
\end{thm*}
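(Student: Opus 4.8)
The plan is to turn a $k$-regular map into an inequality between dual Stiefel--Whitney classes of a tautological bundle over a configuration space, and then to estimate the relevant class using $\bar w_d(M)\neq 0$; the evenness of $k$ is what produces the factor $k/2$. Over the space $B(M,k)$ of unordered $k$-point subsets of $M$ there is a canonical rank-$k$ real bundle $\xi_k$ whose fibre over $S=\{x_1,\dots,x_k\}$ is $\mathbb R^S$, the space of real functions on $S$. If $f\colon M\to\mathbb R^m$ is linearly $k$-regular, then for every $S$ the map $\mathbb R^m\to\mathbb R^S$, $v\mapsto(\langle v,f(x_i)\rangle)_i$, is surjective (the $f(x_i)$ are independent), so $\xi_k$ is a quotient, hence a summand, of $\underline{\mathbb R^m}$: there is a bundle $\eta$ of rank $m-k$ with $\xi_k\oplus\eta\cong\underline{\mathbb R^m}$, whence $\bar w_j(\xi_k)=w_j(\eta)=0$ for $j>m-k$. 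Thus it is enough to exhibit a nonzero class $\bar w_j(\xi_k)$ with $j=\tfrac k2(n+d-1)$, which yields $m\ge k+\tfrac k2(n+d-1)=\tfrac k2(n+d+1)$. (The affine case is identical after the standard device $v\mapsto(v,1)$ into $\mathbb R^{m+1}$, which is why the affine bound is one smaller; note also $d\le n-1$ always, since $H^n(M;\mathbb Z/2)=0$ for open $M$ and, for closed $M$, $\bar w_i(M)=0$ for $i>n-\alpha_2(n)$ by Massey's bound, so the restriction arguments below always apply.)

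Next, use that $k=2l$ is even to restrict $\xi_{2l}$ to a subspace of $B(M,2l)$ modelling ``$l$ infinitesimal unordered pairs sitting at $l$ distinct points, the $i$-th pair oriented along a chosen tangent line''. Concretely let $\widetilde Y$ be the total space over the ordered configuration space $F(M,l)$ of the fibrewise projectivised tangent bundle, i.e.\ the fibre over $(y_1,\dots,y_l)$ is $\prod_{i=1}^l\mathbb{RP}(T_{y_i}M)$; sending $(y_i,[v_i])$ to the nearby pair along $\pm v_i$ defines a map $\widetilde Y\to B(M,2l)$ (through $\widetilde Y/S_l$). Splitting each $\mathbb R^{\{\mathrm{pair}\}}$ into its symmetric and antisymmetric line identifies the pullback of $\xi_{2l}$ to $\widetilde Y$ with $\underline{\mathbb R^l}\oplus(L_1\oplus\cdots\oplus L_l)$, where $L_i$ is the tautological line bundle on the $i$-th projectivised tangent space. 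Since a class nonzero after pullback to $\widetilde Y$ was already nonzero on $B(M,2l)$, it suffices to prove $\bar w_{l(n+d-1)}\bigl(\underline{\mathbb R^l}\oplus\bigoplus_iL_i\bigr)\neq 0$ in $H^{*}(\widetilde Y;\mathbb Z/2)$.

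Now the computation. With $\ell_i=w_1(L_i)$ we have $\bar w\bigl(\underline{\mathbb R^l}\oplus\bigoplus_iL_i\bigr)=\prod_{i=1}^l(1+\ell_i)^{-1}$, and by the projective bundle theorem $H^{*}(\widetilde Y;\mathbb Z/2)$ is a free $H^{*}(F(M,l);\mathbb Z/2)$-module on the monomials $\ell_1^{a_1}\cdots\ell_l^{a_l}$, $0\le a_i\le n-1$, with relations $\ell_i^{\,n}=\sum_{j\ge 1}\pi_i^{*}w_j(TM)\,\ell_i^{\,n-j}$, where $\pi_i\colon F(M,l)\to M$ is the $i$-th evaluation. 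Fixing a metric and writing $(1+\ell_i)^{-1}=w(L_i^{\perp})\cdot\pi_i^{*}\bar w(TM)$ inside the cohomology of the $i$-th projective bundle (so that the coefficient of $\ell_i^{\,n-1}$ in $(1+\ell_i)^{-1}$ is exactly $\pi_i^{*}\bar w(TM)$), one finds that the coefficient of the top monomial $\ell_1^{n-1}\cdots\ell_l^{n-1}$ in the degree-$l(n+d-1)$ part of $\prod_i(1+\ell_i)^{-1}$ is the degree-$ld$ component of $\prod_{i=1}^l\pi_i^{*}\bar w(TM)$, which contains the term $\prod_{i=1}^l\pi_i^{*}\bar w_d(M)$. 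Hence everything comes down to showing $\prod_{i=1}^l\pi_i^{*}\bar w_d(M)\neq 0$ in $H^{ld}(F(M,l);\mathbb Z/2)$; granting this, $\bar w_{l(n+d-1)}(\xi_{2l})\neq 0$ and the first inequality follows.

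For the compact case one refines the last step by Poincaré--Lefschetz duality, which becomes available once $M$ (and with it the relevant closed pieces entering the construction, such as $\mathbb{RP}(TM)$) is closed: detecting the obstruction class against the fundamental class coming from $\bar w_d(M)\frown[M]$ forces the complement bundle $\eta$ to have rank at least $l(n+d-1)+1$, giving the extra unit, exactly as in the earlier Boltyanskii--Ryshkov--Shashkin and Handel estimates. I expect the genuine obstacle to be the nonvanishing of $\prod_i\pi_i^{*}\bar w_d(M)$ in $H^{*}(F(M,l);\mathbb Z/2)$: this class is visibly nonzero in $H^{*}(M^l)$ by K\"unneth, but restricting to the open subset $F(M,l)$ could destroy it, and already the restriction of each factor to a fibre of $F(M,l)\to F(M,l-1)$ is constant. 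For $ld<n$ nonvanishing is immediate because the fat diagonal has codimension $n$, so $H^{<n}(M^l)\hookrightarrow H^{<n}(F(M,l))$; in general one must induct along the Fadell--Neuwirth fibrations $F(M,l)\to F(M,l-1)$, using the Serre spectral sequence together with the fact (valid since $d<n$) that $\pi_l^{*}\bar w_d(M)$ restricts isomorphically to $\bar w_d(M)$ on the fibre $M$ minus $l-1$ points, and checking that the resulting permanent cycle is not hit by a differential.
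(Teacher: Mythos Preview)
Your reduction is correct up to the point you yourself flag as the ``genuine obstacle'': you need $\prod_{i=1}^l\pi_i^*\bar w_d(M)\neq 0$ in $H^*(F(M,l);\mathbb F_2)$, and you do not prove it. This is not a technicality---it is exactly the place where the original argument in Handel's paper breaks down. Your map $\widetilde Y\to B(M,2l)$ is only defined over the open set $F(M,l)\subset M^l$ (where the midpoints are distinct, so the $2l$ nearby points are distinct), and therefore the K\"unneth argument that trivially gives nonvanishing over $M^l$ is unavailable. The sketch via Fadell--Neuwirth is not a proof: the class $\pi_l^*\bar w_d(M)$ does restrict nontrivially to each fibre $M\setminus\{l-1\ \text{points}\}$, but this alone does not prevent the product from dying in the spectral sequence, and you give no argument that it survives. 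Your treatment of the compact case is also only a gesture, not an argument.

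The paper avoids this problem by abandoning the pullback from $B(M,2l)$ altogether. Its Lemma~\ref{k-partition} (proved as Lemma~5.1) constructs, directly from the $k$-regularity of $f$, a fibrewise injective linear map from the bundle $\mathbb R\times\prod_{i=1}^l\alpha_2(Q_2(M,\delta_i))$ over the \emph{full product} $\prod_{i=1}^l Q_2(M,\delta_i)$ into the trivial bundle $\mathbb R^m$. The key point is a combinatorial ``tree'' argument: with the scales $\delta_i$ chosen so that pairs at different levels cannot close a cycle, the edges $(x_j^i,x_{j+1}^i)$ on the set of \emph{distinct} image points form a forest, so the corresponding difference vectors $f(x_j^i)-f(x_{j+1}^i)$ are independent even when points from different factors coincide. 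This lets one compute $\bar\ell$ over $\prod_i Q_2(M)$, where K\"unneth applies and $\bar\ell(\alpha_2(Q_2(M)))=n+\bar\ell(\tau M)$ immediately gives the bound (in fact one unit stronger than Handel's, because of the extra $\mathbb R$ summand). The compact improvement then comes from replacing one $Q_2(M)$ by the space $R^2(M)$ of pairs at distance $\ge\delta$ and invoking the Wu--McCrory computation $\bar\ell(\alpha_2(R^2(M)))=n+\bar\ell(\tau M)+1$.
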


The proof of this theorem in~\cite{ha1996} was incorrect, the map between the configuration spaces (the third formula from the page bottom in~\cite[page~1611]{ha1996}) was defined incorrectly. Informally, $k$ pairwise distinct pairs of points do not necessarily constitute $2k$ pairwise distinct points. Still, in this paper this theorem is rehabilitated and a slightly stronger result (Theorem~\ref{handel2} in Section~\ref{k-partition-sec}) is proved.

We start from an observation (see Lemma~\ref{k-partition}) that it is important to decompose $k$ into a sum of powers of two. It was already known (and obvious) for $M=\mathbb R^n$, but it also works for arbitrary manifold $M$, if we define the appropriate subspaces of the configuration space (see Section~\ref{Qq-defn-sec}). The power of two sum may be the standard binary expansion, as in the theorem of Chisholm, or it may be the sum of $2$'s, as in the theorem of Handel, or something between these two cases.

Using the above remark, we concentrate on the case $k$ a power of two, and denote it by $q$ in this case. In Sections~\ref{ext-sq-sec} and \ref{Sq-coh-sec} the external Steenrod square construction is used to describe the cohomology mod $2$ of the Sylow subgroups of the symmetric group and corresponding configuration subspaces. The results of these sections were previously obtained in~\cite{hung1990}, but we give a self-contained explanation of them.

In Sections~\ref{Qq-coh-sec}, \ref{Rn-emb-sec}, and \ref{Qq-bundle-sec} we give some explicit formulas that allow us to calculate the lower bounds on $m$ for given $M$. In the case $M=\mathbb R^n$ the formulas are almost explicit (see Theorem~\ref{Qq-Rn}). For arbitrary manifolds and $q\ge 4$ there is no general explicit formula (except for the case $q=2$ in the theorem of Handel), but the problem is reduced to some straightforward algebraic calculations with the cohomology of configuration spaces and the Stiefel--Whitney classes of $M$.

In Section~\ref{dim-plus-barl-sec} some particular manifolds $M$ (with a restriction on the dimension and the dual Stiefel--Whitney classes) are considered, and explicit bounds for the dimension of a regular embedding are given. In particular, some products of a projective space with a circle can be taken as $M$.

In Section~\ref{coinc} we apply the computations in the cohomology of the configuration spaces to another problem. We prove the existence of multiple points of continuous maps of a projective space to a Euclidean space, generalizing previous results of the author~\cite{kar2010}.

\vskip 0.5cm
{\bf Acknowledgments.}
The author thanks Peter Landweber and Pavle Bla\-go\-je\-vi\'c for the discussions and useful remarks.

\section{Configuration spaces}

In order to study the images of $k$-tuples of points under some continuous map $f : M\to\mathbb R^n$, it is natural to introduce the configuration space:

\begin{defn} For a topological space $X$ define the \emph{configuration space} by
$$
F_k(X) = \{(x_1, \ldots, x_k)\in X^{\times k} : x_i\neq x_j\ \text{if}\ i\neq j\}.
$$
\end{defn}

Note that the permutation group $\Sg_k$ acts freely on $F_k(X)$.

Denote by $V_k$ the natural $k$-dimensional representation of $\Sg_k$ by permuting the basis vectors. This representation induces a $\Sg_k$-equivariant vector bundle 
$$
V_k\times Y \to Y
$$
over any space $Y$ with action of $\Sg_k$ ($\Sg_k$-space). 

\begin{defn}
For an (equivariant) vector bundle $\xi : E(\xi)\to X$, denote by $\xi^\perp$ an (equivariant) vector bundle of minimal dimension, such that the bundle $\xi\oplus\xi^\perp$ is trivial. This bundle need not be uniquely determined up to isomorphism, but its stable isomorphism class is determined uniquely.
\end{defn}

The main tool in proving the lower bounds for the dimension of $k$-regular maps is the following lemma from~\cite{coha1978}:

\begin{lem}
\label{inv-bundle}
Consider the equivariant bundle $\nu_k(X) : V_k\times F_k(X)\to F_k(X)$. If $\dim\nu_k(X)^\perp = l$ then there is no linearly $k$-regular map $X\to \mathbb R^{k+l-1}$, and no affinely $k$-regular map $X\to \mathbb R^{k+l-2}$.
\end{lem}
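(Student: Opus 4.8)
The plan is to argue by contradiction: a linearly $k$-regular map will be turned into an equivariant monomorphism of $\nu_k(X)$ into a trivial bundle, after which a dimension count against the minimality in the definition of $\nu_k(X)^\perp$ finishes the linear case, and the affine case follows by the standard homogenization trick $x\mapsto(x,1)$.

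First I would set up the linear case. Suppose $f:X\to\mathbb R^m$ is linearly $k$-regular, and over the base $F_k(X)$ consider the trivial bundle $\underline{\mathbb R^m}=\mathbb R^m\times F_k(X)$ with $\Sigma_k$ acting trivially on the fiber; since the fiber is a (trivial) representation, this bundle is equivariantly trivial. Define $\Phi:\nu_k(X)\to\underline{\mathbb R^m}$ over a point $(x_1,\dots,x_k)\in F_k(X)$ by sending the $i$-th basis vector $e_i\in V_k$ to $f(x_i)$ and extending linearly; this is continuous and $\Sigma_k$-equivariant. By definition of linear $k$-regularity the vectors $f(x_1),\dots,f(x_k)$ are linearly independent at every point of $F_k(X)$, so $\Phi$ is fiberwise injective, i.e. it identifies $\nu_k(X)$ with an equivariant subbundle of $\underline{\mathbb R^m}$. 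Now equip $\underline{\mathbb R^m}$ with a $\Sigma_k$-invariant fiberwise inner product (the constant standard one works, as $\Sigma_k$ acts trivially on the fiber) and let $\eta$ be the fiberwise orthogonal complement of $\Phi(\nu_k(X))$. Then $\eta$ is an equivariant bundle of dimension $m-k$ with $\nu_k(X)\oplus\eta\cong\underline{\mathbb R^m}$ equivariantly trivial, whence $m-k\ge\dim\nu_k(X)^\perp=l$, i.e. $m\ge k+l$. This rules out a linearly $k$-regular map $X\to\mathbb R^{k+l-1}$.

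For the affine case, if $f:X\to\mathbb R^{k+l-2}$ were affinely $k$-regular, then $g:X\to\mathbb R^{k+l-1}$, $g(x)=(f(x),1)$, would be linearly $k$-regular: $k$ points of $\mathbb R^{k+l-2}$ are affinely independent exactly when their lifts (appending a last coordinate equal to $1$) are linearly independent in $\mathbb R^{k+l-1}$. This contradicts the linear case, so no affinely $k$-regular map $X\to\mathbb R^{k+l-2}$ exists.

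The only delicate point is the existence of the equivariant orthogonal complement $\eta$ as a genuine subbundle with $\nu_k(X)\oplus\eta$ trivial — that is, the splitting of the bundle monomorphism $\Phi$ — which is standard over a paracompact base (and, since $\Sigma_k$ acts freely on $F_k(X)$, one may equivalently work with the associated bundles over the quotient $F_k(X)/\Sigma_k$); I would simply invoke the usual partition-of-unity argument rather than reprove it. One should also record that ``trivial'' in the definition of $\xi^\perp$ is meant in the equivariant sense, so that $\underline{\mathbb R^m}$ with the trivial representation qualifies; this is exactly what legitimizes the dimension count $m-k\ge l$.
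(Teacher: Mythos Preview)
Your argument is correct and is exactly the standard one: a linearly $k$-regular map $f$ yields the equivariant monomorphism $(t_1,\dots,t_k;x_1,\dots,x_k)\mapsto\sum t_if(x_i)$ of $\nu_k(X)$ into the trivial bundle $\underline{\mathbb R^m}$, the orthogonal complement witnesses $m-k\ge l$, and the affine case reduces to the linear one via $x\mapsto(f(x),1)$. The paper itself does not prove this lemma but simply cites it from \cite{coha1978}; your write-up is essentially the proof found there, so there is nothing further to compare.
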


It follows from this lemma that we have to study the Stiefel--Whitney (or Pontryagin) classes of $\nu_k(X)^\perp$ and prove that these classes are nonzero in $H_{\Sg_k}^l(F_k(X))$ (with coefficients $\mathbb F_2$ or $\mathbb Z$ respectively) for large enough $l$. The characteristic classes of $\nu_k(X)^\perp$ are usually called the dual Stiefel--Whitney (or Pontryagin) classes of $\nu_k(X)$ and denoted by $\bar w(\nu_k(X))$ and $\bar p(\nu_k(X))$ respectively.

\section{Special configuration subspaces}
\label{Qq-defn-sec}

Let us define a subspace $Q_q(M)$ of the configuration space $F_q(M)$; this subspace is a smooth manifold provided $M$ is a smooth manifold. Such subspaces were introduced for $M=\mathbb R^n$ in~\cite{hung1990} and proved to be useful in determining the cohomology of the symmetric group. They were also used in~\cite{kar2010} to establish some theorems on multiple points of continuous maps, see also Section~\ref{coinc}. We start with the case $M=\mathbb R^n$.

\begin{defn}
Let $q=2^l$ and $\delta>0$. Let $\Qo_1(\mathbb R^n)$ be the configuration, consisting of one point at the origin. 

Let by induction $\Qo_q(\mathbb R^n, \delta)$ be the set of all $q$-point configurations, such that the first $q/2$ points form a configuration of $\Qo_{q/2}(\mathbb R^n, \delta/3)$, shifted by a vector $u$ of length $\delta$, and the other $q/2$ points form a configuration of $\Qo_{q/2}(\mathbb R^n, \delta/3)$, shifted by the vector $-u$.
\end{defn}

Note that $\Qo_q(\mathbb R^n) = \tilde M(n, \log_2 q)$ in the notation of~\cite{hung1990}.

\begin{defn}
\label{Qq-defn2}
A configuration in $\Qo_q(\mathbb R^n, \delta)$ can also be described inductively as $x_1,\ldots, x_q\in\mathbb R^n$ such that all the distances $\dist(x_{2i-1}, x_{2i}) = \dfrac{2\delta}{3^{l-1}}$ and the midpoints of $[x_{2i-1}, x_{2i}]$ form a configuration of $\Qo_{q/2}(\mathbb R^n, \delta)$. 
\end{defn}

Note that $\Qo_q(\mathbb R^n)$ is always a product of $q-1$ spheres of dimension $n-1$. We shall omit $\delta$ from the notation since it does not change the diffeomorphism type of $Q_q(\mathbb R^n)$. Then we can naturally define the fiberwise configuration space $Q_q(\xi)$ for any vector bundle $\xi$ as a bundle of corresponding to the union of $\Qo_q(\xi^{-1}(x))$ for all $x\in M$. This is a subspace of the full fiberwise configuration space $F_q(\xi)$, defined in a similar manner.

Note that Definition~\ref{Qq-defn2} (distance and midpoint characterization) can be applied to any Riemannian manifold $M$, if we allow the last center point (configuration $\Qo_1$) to be any $x\in M$.

\begin{defn}
Let $M$ be a Riemannian manifold. Define $Q_q(M, \delta)\subset F_q(M)$ for $q=2^l$ inductively as follows:

1. $Q_1(M) = M$;

2. For $q\ge 2$ let $Q_q(M, \delta)$ be the set of $q$-tuples $x_1,\ldots, x_q\in M$ such that all the distances $\dist(x_{2i-1}, x_{2i}) = \dfrac{2\delta}{3^{l-1}}$ and the midpoints of $[x_{2i-1}, x_{2i}]$ form a configuration of $Q_{q/2}(M, \delta)$. 
\end{defn}

The following lemma describes $Q_q(M,\delta)$ as a bundle over $M$.

\begin{lem}
\label{metric-Qq}
Let the injectivity radius of $M$ be $r$ and $2\delta < r$. Then $Q_q(M, \delta)$ is a fiber bundle (the bundle map is the last stage midpoint) over $M$, and is naturally homeomorphic to $Q_q(\tau M)$
\end{lem}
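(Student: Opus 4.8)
The plan is to build the homeomorphism $Q_q(M,\delta)\cong Q_q(\tau M)$ by induction on $l=\log_2 q$, using the exponential map of $M$ to identify, in each fiber, the metric configuration of pairs-at-fixed-distance with the corresponding linear configuration in the tangent space. For $q=1$ both spaces are just $M$ and there is nothing to prove. For the inductive step, one unwinds the two inductive definitions in parallel: a point of $Q_q(M,\delta)$ is a $q/2$-tuple of midpoints forming a configuration of $Q_{q/2}(M,\delta)$ together with, for each midpoint $m_i$, an unordered-pair-with-labels $(x_{2i-1},x_{2i})$ at distance $2\delta/3^{l-1}$ whose midpoint is $m_i$; a point of $Q_q(\tau M)$ is, fiberwise over a point $y\in M$, a configuration of $\Qo_q(T_yM,\delta)$, which by Definition~\ref{Qq-defn2} is the analogous data in the linear world. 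So I would first establish a fiberwise map $Q_q(M,\delta)\to M$ (the ``last-stage midpoint'', i.e.\ the common image in $Q_1(M)=M$ obtained by iterating midpoints), show it is well defined, and then exhibit over each $y\in M$ a homeomorphism of the fiber onto $\Qo_q(T_yM,\delta)$.

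The key geometric input is this: if $m\in M$ and $2\delta<r$, the set of unordered pairs $\{x,x'\}\subset M$ with midpoint $m$ and $\dist(x,x')=\rho$ (for $\rho\le 2\delta/3^{l-1}<r$) is in canonical bijection with the sphere of radius $\rho/2$ in $T_mM$, via $v\mapsto\{\exp_m(v),\exp_m(-v)\}$; here I am using that within the injectivity radius the midpoint of a minimizing geodesic is unique and depends smoothly on its endpoints, and conversely that $\exp_m$ restricted to the ball of radius $r$ is a diffeomorphism onto its image. This gives, at the top stage, a fiber bundle $Q_q(M,\delta)\to Q_{q/2}(M,\delta)$ with fiber $(S^{n-1})^{q/2}$; composing with the bundle structure on $Q_{q/2}(M,\delta)$ supplied by the inductive hypothesis (which by induction is $Q_{q/2}(\tau M)\to M$) realizes $Q_q(M,\delta)$ as a bundle over $M$. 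To identify it with $Q_q(\tau M)$, I would, over a fixed $y\in M$, use $\exp_y$ to carry the whole metric fiber into $T_yM$: the $q/2$ midpoints land on a configuration of $\Qo_{q/2}(T_yM,\delta)$ by the inductive hypothesis applied fiberwise, and each attached geodesic pair is carried to a linear pair symmetric about its midpoint, so the image is exactly $\Qo_q(T_yM,\delta)$ as described in Definition~\ref{Qq-defn2}. One checks this assignment is a homeomorphism on each fiber and varies continuously in $y$, hence is a bundle isomorphism over $\mathrm{id}_M$.

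The main obstacle — really the only subtle point — is that the midpoint and the pair-recovery operations are geometrically natural but a priori only defined up to the ambiguity in choosing the ``first vs.\ second'' point of each pair and in choosing which geodesic realizes a given midpoint; the bound $2\delta<r$ is exactly what removes this ambiguity and makes all the maps single-valued and continuous, and one must be careful that at stage $j$ from the bottom the relevant distance $2\delta/3^{j-1}$ is indeed $<r$ (it is, since $2\delta<r$ and the factors $3^{-(j-1)}\le 1$), so uniqueness of minimizing geodesics and smoothness of $\exp$ apply at every stage. A secondary point is to verify that the ``last stage midpoint'' really is a well-defined continuous map $Q_q(M,\delta)\to M$ and agrees with the bundle projection obtained inductively; this is a routine diagram chase once the stagewise midpoint maps are known to be continuous. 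Granting these, the induction closes and the naturality of $\exp$ gives the asserted homeomorphism with $Q_q(\tau M)$.
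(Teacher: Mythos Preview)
Your inductive strategy---realize $Q_q(M,\delta)$ as a $(S^{n-1})^{q/2}$-bundle over $Q_{q/2}(M,\delta)$, identify each sphere with a sphere in a tangent space via the exponential map, and then invoke the inductive hypothesis---is exactly the paper's approach. The gap is in the identification step. You write that over a fixed $y\in M$ one can ``use $\exp_y$ to carry the whole metric fiber into $T_yM$'' so that ``each attached geodesic pair is carried to a linear pair symmetric about its midpoint''. This is false in general: $\exp_y$ sends geodesics through $y$ to straight lines, but it does \emph{not} send a geodesic through some other point $m_i\neq y$ to a straight line, nor does it preserve midpoints or distances of segments not passing through $y$. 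So $\exp_y^{-1}$ applied to the points $x_1,\dots,x_q$ lands in $(T_yM)^q$, but the image is not the set $\Qo_q(T_yM,\delta)$; the $q/2$ midpoints in particular need not satisfy the distance constraints of $\Qo_{q/2}(T_yM,\delta)$, and invoking the inductive hypothesis does not help here because that identification is not $\exp_y^{-1}$.

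The correct bookkeeping, which the paper supplies, is this: the sphere attached at the $i$-th midpoint $m_i$ is canonically the unit sphere of $T_{m_i}M$, i.e.\ of $\pi_i^*(\tau M)$ where $\pi_i:Q_{q/2}(M,\delta)\to M$ records the $i$-th point. These bundles are different for different $i$, and none of them is $\pi^*(\tau M)$ (where $\pi$ is the centerpoint map) on the nose. What makes the identification work is that each $\pi_i$ is \emph{homotopic} to $\pi$---one deforms $m_i$ along the successive midpoint contractions down to $y$---so $\pi_i^*(\tau M)\cong\pi^*(\tau M)$ as vector bundles over $Q_{q/2}(M,\delta)$. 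This replaces all the spheres by spheres in $T_yM$ in a way that varies continuously and equivariantly, and then the inductive hypothesis finishes the job. Your argument would be complete once you insert this bundle-homotopy step in place of the single application of $\exp_y$.
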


\begin{proof}
We prove this by induction. For any configuration $(x_1,\ldots, x_q)\in Q_q(M,\delta)$ the midpoints of pairs $[x_1, x_2], [x_2,x_3],\ldots,[x_{q-1}, x_q]$ form a configuration in $Q_{q/2}(M,\delta)$. Since $2\delta_k<r$, then knowing the midpoint of $[x_1,x_2]$, the possible positions of the points $x_1,x_2$ form a sphere. 

So $Q_q(M,\delta)$ is a product-of-spheres bundle over $Q_{q/2}(M,\delta)$. Moreover, these spheres are spheres of the pullbacks of the tangent bundle $\pi_i^*(\tau M)$, where $\pi_i : Q_{q/2}(M,\delta/3)\to M$ is the map, assigning to a configuration its $i$-th point. Note that the maps $\pi_i$ are all homotopic to the centerpoint map $\pi :Q_{q/2}(M,\delta/3)\to M$ (the homotopy can be obtained by deforming a point $x_{2i-1}$ or $x_{2i}$ to the midpoint of $[x_{2i-1}, x_{2i}]$, and then repeating inductively), hence all the vector bundles are equivalent to $\pi^*(\tau M)$. Now the proof is completed by applying the inductive assumption.
\end{proof}

The space $\Qo_q$ (or $Q_q$) is not invariant under the natural action of $\Sg_q$, but it is invariant under the action of its $2$-Sylow subgroup.

\begin{defn}
Let $q=2^k$. Denote by $\Sy_q$ the Sylow subgroup of $\Sg_q$, generated by all permutations of two consecutive blocks $[a2^l + 1, a2^l + 2^{l-1}]$ and $[a2^l + 2^{l-1}+1, (a+1)2^l]$, where $1\le l \le k$ and $0\le a \le 2^{k-l}-1$. 
\end{defn}

Denote by $A_q$ the subspace of the natural $q$-dimensional representation $V_q$ of $\Sg_q$, consisting of the vectors with zero coordinate sum. As in the previous section, $A_q$ induces the equivariant bundle $\alpha_q(X)$ over any $\Sg_q$-space $X$, the group $\Sg_q$ can be changed to $\Sy_q$.  The following lemma is proved in~\cite{kar2010} by a simple geometric reasoning, it also follows from the results in~\cite{hung1990}.

\begin{lem}
\label{Qq-euler}
The manifold $\Qo_q(\mathbb R^n)$ is $\Sy_q$-invariant. The cohomology $H_{\Sy_q}^{(q-1)(n-1)}( \Qo_q(\mathbb R^n); \mathbb F_2)$ is generated by the Euler class (the topmost Stiefel--Whitney class)
$$
e\left(\alpha_q(\Qo_q(\mathbb R^n))\right)^{n-1}.
$$
\end{lem}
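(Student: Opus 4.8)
The plan is to prove the two assertions of Lemma~\ref{Qq-euler} in sequence: first that $\Qo_q(\mathbb R^n)$ is invariant under the Sylow subgroup $\Sy_q$, then the cohomological statement about the top-dimensional classes.

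\textbf{Invariance.} I would argue by induction on $l$, where $q = 2^l$. The generators of $\Sy_q$ come in two kinds: the single ``outermost'' swap of the two blocks $[1,q/2]$ and $[q/2+1,q]$, and the generators supported inside one of these two blocks. By the inductive characterization in the Definition, a configuration of $\Qo_q(\mathbb R^n,\delta)$ is an ordered pair $(C_+, C_-)$, where $C_+ \in \Qo_{q/2}(\mathbb R^n, \delta/3)$ shifted by $u$ and $C_- \in \Qo_{q/2}(\mathbb R^n, \delta/3)$ shifted by $-u$, for some $u$ of length $\delta$. Swapping the two blocks replaces $(C_+, C_-, u)$ by $(C_-, C_+, u)$, which is again of this form with the roles of $\pm u$ exchanged (replace $u$ by $-u$); hence the outermost generator preserves $\Qo_q(\mathbb R^n)$. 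A generator supported inside the first block acts only on $C_+$; by the inductive hypothesis $\Qo_{q/2}(\mathbb R^n)$ is invariant under $\Sy_{q/2}$, which contains exactly the generators supported in $[1,q/2]$, so $C_+$ is carried to another configuration of $\Qo_{q/2}$, and likewise for the second block. Since these three types exhaust a generating set of $\Sy_q$, invariance follows.

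\textbf{Cohomology.} Write $N = (q-1)(n-1)$ for the dimension of $\Qo_q(\mathbb R^n)$. By the iterated sphere-bundle structure (as in Lemma~\ref{metric-Qq} specialized to $M = \mathbb R^n$, where all tangent bundles are trivial), $\Qo_q(\mathbb R^n)$ is a product of $q-1$ copies of $S^{n-1}$, so $H^N(\Qo_q(\mathbb R^n);\mathbb F_2) \cong \mathbb F_2$, generated by the product of the fundamental classes of all the sphere factors. The key point is to identify this generator $\Sy_q$-equivariantly. I would set up the relative Gysin/Serre spectral sequence of the bundle $\Qo_q(\mathbb R^n) \to \Qo_{q/2}(\mathbb R^n)$ in Borel equivariant cohomology: the fiber over a configuration is the product of $q/2$ spheres $S(\pi_i^*\tau\mathbb R^n)$, all of which — equivariantly, through the action of $\Sy_q$ — are copies of the same sphere bundle $S(\alpha_{?})$ associated to the standard representation. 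The Euler class $e(\alpha_q(\Qo_q(\mathbb R^n)))$ is the top Stiefel--Whitney class $w_{q-1}$ of the rank-$(q-1)$ bundle $\alpha_q$ restricted to $\Qo_q(\mathbb R^n)$; raising it to the $(n-1)$st power lands in $H^N_{\Sy_q}$. To see it is nonzero there and generates, one restricts to the fiber $\Qo_q(\mathbb R^n)$ over a point of $B\Sy_q$ and checks that $e(\alpha_q)^{n-1}$ restricts to the nonzero top class of the product of spheres — this is a direct computation, since $\alpha_q|_{\Qo_q(\mathbb R^n)}$ splits (after restriction, non-equivariantly) as the sum of the $q-1$ tautological line-plus-bundles coming from the iterated sphere bundles, and the product of their top classes is exactly the fundamental class. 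Since $H^N(\Qo_q(\mathbb R^n);\mathbb F_2)$ is one-dimensional and $e(\alpha_q)^{n-1}$ maps onto it, the restriction map $H^N_{\Sy_q}(\Qo_q(\mathbb R^n)) \to H^N(\Qo_q(\mathbb R^n))$ is onto, and its source is therefore generated by $e(\alpha_q(\Qo_q(\mathbb R^n)))^{n-1}$ together with classes pulled back from $B\Sy_q$ in lower fiber-degree; but in top fiber-degree the Leray--Hirsch/Gysin argument shows the module structure is free of rank one over $H^*(B\Sy_q)$ in each fiber degree, so the top piece is cyclic generated by that Euler power. Alternatively one may simply cite the identification $\Qo_q(\mathbb R^n) = \tilde M(n,l)$ and quote the corresponding computation from~\cite{hung1990}; I would present the geometric Gysin-sequence argument for self-containedness, as the paper promises.

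\textbf{Main obstacle.} The delicate step is the equivariant bookkeeping: one must be careful that the ``product of spheres'' decomposition is only valid after forgetting the $\Sy_q$-action, and that what is canonical equivariantly is the single bundle $\alpha_q$ and its Euler class, not the individual sphere factors (which $\Sy_q$ permutes in a nontrivial, iterated-wreath-product pattern). Getting the right statement — that the \emph{equivariant} top cohomology is cyclic generated by $e(\alpha_q)^{n-1}$ — requires tracking how the Serre spectral sequence of the tower of bundles collapses and how the Euler classes at successive stages multiply together to give precisely $e(\alpha_q)^{n-1}$ rather than some other product of $w$-classes; this is where I expect the real work to lie, and where appeal to the structure established in~\cite{hung1990} (or a careful induction paralleling the construction of $\Qo_q$) is most useful.
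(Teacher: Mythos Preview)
Your invariance argument is correct and is exactly the induction one expects from the wreath-product structure $\Sy_{2q}=(\Sy_q\times\Sy_q)\rtimes\mathbb Z_2$.

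The cohomology argument, however, has a genuine gap. You propose to detect the nonvanishing of $e(\alpha_q)^{n-1}$ by restricting along the fiber inclusion $\Qo_q(\mathbb R^n)\hookrightarrow \Qo_q(\mathbb R^n)\times_{\Sy_q} E\Sy_q$ of the Borel fibration. But $\alpha_q(\Qo_q(\mathbb R^n)) = A_q\times\Qo_q(\mathbb R^n)$ is, non-equivariantly, a trivial product bundle; its pullback to the fiber therefore has Euler class zero. The ``tautological line bundles'' you invoke over the sphere factors $S^{n-1}$ are likewise trivial for $n\ge 2$ (line bundles over a simply connected space), so their top classes vanish and cannot possibly assemble into the fundamental class. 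In short, \emph{all} of the nontrivial characteristic-class information of $\alpha_q$ lives in the $B\Sy_q$ direction, not in the fiber direction, and restriction to the fiber throws it away.

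The paper's route goes the other way. Since the action is free, $H^*_{\Sy_q}(\Qo_q(\mathbb R^n))=H^*(\Po_q(\mathbb R^n))$, and $\Po_q(\mathbb R^n)$ is analyzed as a tower of $\mathbb RP^{n-1}$-bundles via the external Steenrod square fibration~(\ref{Qq-constr}). Lemma~\ref{aq-split} splits $\alpha_q=\bigoplus_i\Sqe^{l-i}\gamma_i$, giving $e(\alpha_q)=\Sqe^{l-1}c_1\cdots c_l$ where each $c_i$ is the hyperplane class of the $i$-th $\mathbb RP^{n-1}$ stage. Then $e(\alpha_q)^{n-1}=\prod_i(\Sqe^{l-i}c_i)^{n-1}$ is precisely the product of the top classes of all the $\mathbb RP^{n-1}$ fibers, hence the generator of $H^{(q-1)(n-1)}(\Po_q(\mathbb R^n))$; this is what the paper means when it says the lemma follows from Lemma~\ref{ext-sq} and the formulas after Lemma~\ref{aq-split}. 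If you want a spectral-sequence proof, run it for the tower of $\mathbb RP^{n-1}$-bundles building $\Po_q(\mathbb R^n)$ over a point, not for the Borel fibration over $B\Sy_q$.
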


In is well known~\cite{admi2004} that the $\Sg_q$-equivariant cohomology with coefficients $\mathbb F_2$ is mapped injectively to the $\Sy_q$-equivariant cohomology; so we do not lose anything. Actually, we could consider arbitrary $q$, not necessarily a power of two, and define the corresponding subspace $Q_q(M, \delta)$ inductively. It is again invariant under the $2$-Sylow subgroup of $\Sg_q$, but its topmost cohomology is not generated by a power of $e(A_q)$, since the latter class is zero already in $H^*(\Sg_q; \mathbb F_2)$.

\section{Generalization of Lemma~\ref{inv-bundle}}
\label{k-partition-sec}

We are going to generalize Lemma~\ref{inv-bundle}, in order to prove the strengthening of the theorem of Handel~\cite{ha1996} and some more results.

Let us introduce some notation, needed to state the generalizations of Lemma~\ref{inv-bundle}.

\begin{defn} For an (equivariant) vector bundle $\xi : E(\xi)\to X$ denote by $\barl(\xi)$ the maximum $k$ such that the dual (equivariant) Stiefel--Whitney class $\bar w_k(\xi)$ is nonzero.
\end{defn}

It follows from the K\"unneth formula and the multiplicativity of the Stiefel--Whitney classes that for the $\times$-product of vector bundles we have
$$
\barl(\xi\times \zeta) = \barl(\xi) + \barl(\zeta).
$$

Now we are going to state the lemma. It is stated for linearly $k$-regular maps, for affinely $k$-regular maps the lower bound is less by $1$.

\begin{lem}
\label{k-partition} 
Let $k=q_1+\dots+q_l$, where $q_i$ are powers of two. Let $M$ be a smooth manifold. If there exists a linearly $k$-regular map $f:M\to \mathbb R^m$, then
$$
m\ge k - l + 1 + \barl \left( \prod_{i=1}^l \alpha_{q_i}( Q_{q_i}(M)) \right) = k - l + 1 + \sum_{i=1}^l \barl \left(\alpha_{q_i}( Q_{q_i}(M))\right).
$$
\end{lem}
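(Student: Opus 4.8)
The plan is to reduce the statement to Lemma~\ref{inv-bundle} by finding, inside the full configuration space $F_k(M)$, a suitable invariant subspace on which the relevant dual Stiefel--Whitney class is nonzero. Given the decomposition $k = q_1 + \dots + q_l$ into powers of two, I would first build a map from the product of the special configuration subspaces $Q_{q_1}(M)\times\dots\times Q_{q_l}(M)$ into $F_k(M)$: concatenate the $l$ tuples into one $k$-tuple. The key technical point here is that this concatenation lands in $F_k(M)$, i.e.\ all $k$ points are pairwise distinct; for this one shrinks the parameters $\delta_i$ so that the $l$ clusters $Q_{q_i}(M,\delta_i)$ are supported in disjoint neighborhoods (using compactness/local finiteness on $M$, or passing to configurations whose ``centerpoints'' are separated), which is exactly the fix for the gap in Handel's argument described in the introduction. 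This map is equivariant for the product group $\Sy_{q_1}\times\dots\times\Sy_{q_l}$, which sits inside $\Sg_k$ (acting on the respective blocks), and it pulls the bundle $\alpha_k(F_k(M))$ (restricted to this subgroup) back to the $\times$-product $\bigoplus_{i=1}^l \alpha_{q_i}(Q_{q_i}(M))$, up to a trivial summand accounting for the difference between $A_k$ and $\bigoplus A_{q_i}$ (dimension $l-1$); similarly $\nu_k$ restricts to $\bigoplus \nu_{q_i}$ on the blocks.

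Next I would translate the hypothesis: a linearly $k$-regular map $f: M\to\mathbb R^m$ gives, via Lemma~\ref{inv-bundle} and its proof, a $\Sg_k$-equivariant trivialization of $\nu_k(M)$ of the appropriate dimension — more precisely, $f$ produces a fiberwise linear injection $V_k \times F_k(M) \hookrightarrow \mathbb R^m \times F_k(M)$ (sending the $j$-th basis vector over a configuration $(x_1,\dots,x_k)$ to $f(x_j)$), hence a complement to $\nu_k(M)$ of dimension $m-k$; equivalently, restricting to the $A_k$-part, a complement to $\alpha_k(F_k(M))$ of dimension $m-k+1$. Restricting everything along the concatenation map to the subspace $Q:=\prod Q_{q_i}(M)$ with the subgroup $G:=\prod\Sy_{q_i}$, we obtain a $G$-equivariant complement to $\alpha_k(F_k(M))|_Q \cong \mathbb R^{l-1}\oplus\bigoplus_i\alpha_{q_i}(Q_{q_i}(M))$ of dimension $m-k+1$. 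Hence $\bigoplus_i\alpha_{q_i}(Q_{q_i}(M))$ admits a $G$-equivariant complement of dimension $(m-k+1)-(l-1) = m-k-l+2$, so by the definition of $\barl$ and the multiplicativity/Künneth identity $\barl(\bigoplus_i\alpha_{q_i}(Q_{q_i})) = \sum_i\barl(\alpha_{q_i}(Q_{q_i}))$, we get $m-k-l+2 \ge \sum_i\barl(\alpha_{q_i}(Q_{q_i}(M))) + 1$ — wait, the bookkeeping gives $m \ge k+l-1 + \barl(\dots)$, matching the claim once one is careful that a complement of dimension $d$ forces $\bar w_j = 0$ for $j>d$, i.e.\ $d \ge \barl$.

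The main obstacle I anticipate is entirely in the first paragraph: making the concatenation $\prod Q_{q_i}(M,\delta_i)\to F_k(M)$ honestly well-defined and equivariant, uniformly over a non-compact $M$, and verifying that under this map the equivariant bundle $\alpha_k$ really does restrict to $\mathbb R^{l-1}\oplus\bigoplus\alpha_{q_i}(Q_{q_i}(M))$ with the correct trivial summand — this is where the erroneous argument in \cite{ha1996} broke down, and the whole point of Section~\ref{Qq-defn-sec} was to set up $Q_q(M)$ so that this works. The cohomological/index-theoretic part (Lemma~\ref{inv-bundle}, Künneth, multiplicativity of $\bar w$) is then routine, and the arithmetic $m \ge k-l+1+\barl(\cdots)$ is just counting the dimension of the equivariant complement produced from $f$.
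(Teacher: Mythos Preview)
Your proposal has a genuine gap precisely where you flagged the difficulty, and the fix you sketch does not work for general $M$. The concatenation map $\prod_i Q_{q_i}(M,\delta_i)\to F_k(M)$ cannot be made well-defined by shrinking the $\delta_i$: each $Q_{q_i}(M,\delta_i)$ is a bundle whose centerpoint ranges over \emph{all} of $M$, so for any choice of $\delta_i$'s two different factors can have coinciding (or arbitrarily close) centerpoints, and hence overlapping configurations. Restricting to a subspace where the centerpoints are separated destroys the product structure, so you lose the K\"unneth identity $\barl(\prod\alpha_{q_i})=\sum\barl(\alpha_{q_i})$ that the statement asserts. (Your idea \emph{does} work when $M=\mathbb R^n$, where one can translate the clusters apart; that is exactly Lemma~\ref{k-partition-Rn}, which indeed yields the stronger bound $m\ge k+\sum\barl$.)

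The paper's proof avoids $F_k(M)$ altogether. It writes down directly a $G$-equivariant fiberwise-linear map
\[
g:\ \epsilon^1\oplus\bigoplus_{i=1}^l \alpha_{q_i}(Q_{q_i}(M,\delta_i))\ \longrightarrow\ \epsilon^m,
\qquad
g(t,(w^i_j)) = t\sum_{i,j} f(x^i_j) + \sum_{i,j} w^i_j f(x^i_j),
\]
which makes sense even when points $x^i_j$ from different blocks coincide. The substance of the argument is then to show $g$ is injective on each fiber. Here the $\delta_i$'s are chosen not to separate the clusters, but so that the minimum intra-cluster distance at level $i$ exceeds the total diameter of all lower levels; this forces the graph on the (possibly repeated) point set with edges $(x^i_j,x^i_{j+1})$ to be a \emph{tree}, whence the images of the standard basis vectors form a triangular system in the basis $f(V)$ (which is independent by $k$-regularity of $f$). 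The rank of the source is $1+\sum(q_i-1)=k-l+1$, giving $m\ge k-l+1+\barl(\prod\alpha_{q_i})$. Your dimension count is also off: if $\alpha_k|_Q\cong\epsilon^{l-1}\oplus\bigoplus\alpha_{q_i}$ had a complement of rank $m-k+1$, then $\bigoplus\alpha_{q_i}$ would have one of rank $m-k+l$, not $m-k-l+2$; in fact $\bar w$ is unchanged by trivial summands, so your route (were it valid) would give $m\ge k+\barl$, the $\mathbb R^n$ bound.
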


We postpone the proof of Lemma~\ref{k-partition} till the next section. Now let us discuss its consequences. If we apply this lemma to the case $M=\mathbb R^n$, $n$ is a power of two, $k=q_1+\ldots+q_l$ is the binary expansion, then we obtain a slightly weaker result than the Chisholm theorem with the inequality
$$
m\ge n(k-\alpha_2(k)) + 1.
$$
It follows from the fact that $\mathbb R^n$ contains any number of copies of $\mathbb R^n$, and the configuration space $F_k(\mathbb R^n)$ contains the product $\prod_{i=1}^l F_{q_i}(\mathbb R^n)$. In other words, in the case $M=\mathbb R^n$ Lemma~\ref{k-partition} can be modified as follows.

\begin{lem}
\label{k-partition-Rn} 
Let $k=q_1+\dots+q_l$, where $q_i$ are powers of two. Let $n$ be an integer. If there exists a linearly $k$-regular map $f:\mathbb R^n\to \mathbb R^m$, then
$$
m\ge k + \sum_{i=1}^l \barl \left(\alpha_{q_i}( Q_{q_i}(\mathbb R^n))\right).
$$
\end{lem}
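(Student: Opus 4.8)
The plan is to derive Lemma~\ref{k-partition-Rn} from Lemma~\ref{k-partition} by exploiting the fact that $\mathbb R^n$ is, in a homotopy-theoretic sense, much more flexible than an arbitrary manifold. The key geometric observation is that $\mathbb R^n$ contains $l$ pairwise disjoint open balls $B_1,\dots,B_l$, each of which is homeomorphic (in fact diffeomorphic) to $\mathbb R^n$ itself. Placing the $i$-th block of $q_i$ points inside $B_i$ and using that points in distinct balls are automatically distinct, one gets an embedding
$$
\prod_{i=1}^l F_{q_i}(\mathbb R^n) \hookrightarrow F_k(\mathbb R^n),
$$
equivariant with respect to the inclusion $\Sy_{q_1}\times\cdots\times\Sy_{q_l}\hookrightarrow\Sy_k$, and this restricts to an embedding $\prod_i Q_{q_i}(\mathbb R^n)\hookrightarrow Q_k(\mathbb R^n)$ compatible with the fiberwise structure.

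Next I would track how the relevant bundle $\nu_k$ behaves under this inclusion. The point is that $\nu_k(\mathbb R^n)$, restricted to $\prod_i F_{q_i}(\mathbb R^n)$, is $\times$-equivalent to $\prod_i \nu_{q_i}(\mathbb R^n)$ up to a trivial summand coming from the splitting $V_k \cong V_{q_1}\oplus\cdots\oplus V_{q_l}$ as a representation of the product subgroup; more precisely the reduced bundle $\alpha_k$ restricts to $\prod_i\alpha_{q_i}$ plus a trivial $(l-1)$-dimensional bundle (the complement of the $l$-dimensional "block-sum" subspace inside $A_k$). Since for $M=\mathbb R^n$ the base $Q_{q_i}(\mathbb R^n)$ is just a product of spheres $S^{n-1}$ and the tangent bundle is trivial, there is no "$\tau M$" contribution and no correction term, so the dual Stiefel--Whitney classes multiply cleanly under the $\times$-product as recorded in the excerpt: $\barl(\xi\times\zeta)=\barl(\xi)+\barl(\zeta)$. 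Therefore the restriction map on mod-$2$ equivariant cohomology sends the top dual class of $\alpha_k(Q_k(\mathbb R^n))$ in the relevant degree onto the product of the top dual classes of the factors, which is nonzero in $H^*_{\Sy_{q_1}\times\cdots}(\prod_i Q_{q_i}(\mathbb R^n))$ by Künneth.

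Now I would combine this with Lemma~\ref{k-partition} itself, but applied to the larger space. A $k$-regular map $\mathbb R^n\to\mathbb R^m$ restricts to a $k$-regular map on the disjoint union of balls, and the non-vanishing of $\bar w$ on the product configuration space gives, via Lemma~\ref{inv-bundle} (or rather its proof, which is what the $k-l+1$ bookkeeping in Lemma~\ref{k-partition} encodes), the bound $m \ge k + \sum_i \barl(\alpha_{q_i}(Q_{q_i}(\mathbb R^n)))$. The saving of $l-1$ compared to the generic statement $m\ge k-l+1+\sum_i\barl(\cdots)$ comes precisely from the fact that over $\mathbb R^n$ one does not merely have the $\times$-product of the small reduced bundles but the full $\alpha_k$, whose extra trivial $(l-1)$-dimensional summand does not affect the dual classes but does restore the $k$ in place of $k-l+1$; equivalently, the balls being genuinely disjoint lets us use $F_k$ rather than the intersection-type subspace $Q_k$ behaviour that forced the loss in the general manifold case.

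The main obstacle, I expect, will be making the bundle-restriction bookkeeping fully rigorous: one must check that under the inclusion $\prod_i F_{q_i}(\mathbb R^n)\hookrightarrow F_k(\mathbb R^n)$ the equivariant bundle $\nu_k^\perp$ restricts to something stably equivalent to $\prod_i\nu_{q_i}^\perp$ \emph{with the correct dimension count}, and that the dual Stiefel--Whitney class in the top relevant degree survives restriction (non-vanishing of a product of classes via Künneth is easy; the subtlety is identifying the right degree and confirming the class in that degree is exactly the external product). Once the identification of bundles and the multiplicativity $\barl(\xi\times\zeta)=\barl(\xi)+\barl(\zeta)$ are in hand, together with Lemma~\ref{k-partition} and Lemma~\ref{inv-bundle}, the inequality follows formally.
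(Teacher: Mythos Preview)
Your approach is essentially the same as the paper's: the paper simply remarks that ``$\mathbb R^n$ contains any number of copies of $\mathbb R^n$, and the configuration space $F_k(\mathbb R^n)$ contains the product $\prod_{i=1}^l F_{q_i}(\mathbb R^n)$,'' and leaves the rest implicit. Your bundle bookkeeping (that $\nu_k$ restricts to $\prod_i\nu_{q_i}$, i.e.\ $\alpha_k$ restricts to $\prod_i\alpha_{q_i}\oplus\epsilon^{l-1}$) is exactly the right computation, and together with Lemma~\ref{inv-bundle} and K\"unneth multiplicativity of $\barl$ this gives the bound directly; there is no need to route through Lemma~\ref{k-partition} or through $Q_k(\mathbb R^n)$ (which the paper only defines for $k$ a power of two), and the aside about triviality of $\tau(\mathbb R^n)$ is irrelevant here.
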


Let us give another application of Lemma~\ref{k-partition}, which is a stronger version of the theorem of Handel~\cite{ha1996}.

\begin{thm}
\label{handel2}
Suppose $M$ is an $n$-dimensional manifold, $k$ is an even number. If there is a $k$-regular map of $M$ to $\mathbb R^m$ then 
$$
m \ge \frac{k}{2}(n+\barl(\tau M)+1) + 1.
$$
Moreover, if $M$ is compact then
$$
m \ge \frac{k}{2}(n+\barl(\tau M)+1) + 2.
$$
\end{thm}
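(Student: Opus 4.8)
The plan is to feed the generalized obstruction, Lemma~\ref{k-partition}, with the decomposition of the even number $k$ into $k/2$ copies of $2$, i.e. $k=2+\dots+2$, so that $l=k/2$ and every $q_i=2$. Lemma~\ref{k-partition} then yields
$$
m\ge k-\tfrac{k}{2}+1+\tfrac{k}{2}\,\barl\bigl(\alpha_2(Q_2(M))\bigr)
 =\tfrac{k}{2}\bigl(\barl(\alpha_2(Q_2(M)))+1\bigr)+1,
$$
so, up to the additive refinements, everything reduces to computing the single number $\barl(\alpha_2(Q_2(M)))$.

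First I would pin down the geometry of $Q_2(M)$. By Lemma~\ref{metric-Qq}, for small $\delta$ the space $Q_2(M)$ is $\Sg_2$-homeomorphic to $Q_2(\tau M)$, which is the unit tangent sphere bundle $S(\tau M)$ with $\Sg_2$ acting fibrewise by $v\mapsto -v$. This action is free, so the Borel construction of $Q_2(M)$ is the projectivized tangent bundle $\mathbb P(\tau M)=S(\tau M)/\Sg_2$, and the equivariant bundle $\alpha_2(Q_2(M))$, whose fibre is the one-dimensional sign representation $A_2$ of $\Sg_2$, descends to the tautological line bundle $\gamma$ over $\mathbb P(\tau M)$ with $w_1(\gamma)=c$ the Leray--Hirsch class.

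Next comes the characteristic-class computation. Since $\gamma$ is a line bundle, $\bar w(\alpha_2(Q_2(M)))=(1+c)^{-1}=1+c+c^2+\cdots$, so $\barl(\alpha_2(Q_2(M)))$ is the largest power of $c$ nonzero in $H^*(\mathbb P(\tau M);\mathbb F_2)$. Writing that ring by Leray--Hirsch as a free $H^*(M;\mathbb F_2)$-module on $1,c,\dots,c^{n-1}$ with the Grothendieck relation $c^n=\sum_{i=1}^n\pi^*w_i(\tau M)\,c^{n-i}$, and using that integration over the fibre carries $c^{\,n-1+d}$ to the dual Stiefel--Whitney class $\bar w_d(\tau M)$, one gets $\barl(\alpha_2(Q_2(M)))\ge n-1+\barl(\tau M)$; combined with the display above this already gives a bound of Handel type, and the remaining units come from the refinement below.

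The extra additive $+1$ (and $+2$ for compact $M$) is not detected by dual Stiefel--Whitney classes alone. To obtain it I would exploit that for compact $M$ the subspace $Q_k(M)$ is itself a closed manifold on which the Sylow group $\Sy_k$ acts freely, so that $Q_k(M)/\Sy_k$ is a closed manifold of explicitly known dimension; a Poincar\'e-duality / top-class argument there shows that a linear $k$-regular map forces not merely the vanishing of $\bar w$ but the vanishing of the Euler class of the complementary bundle in top degree, which costs one more dimension (the noncompact case being analogous with one fewer). The step I expect to be the main obstacle is precisely this bookkeeping of the additive constants, together with — for general even $k$ rather than $k=2$ — squeezing out the full main term $\tfrac{k}{2}(n+\barl(\tau M)+1)$, which will force one to work with the nested configuration space $Q_k(M)$ and its wreath-product symmetry instead of the Cartesian product of $k/2$ copies of $Q_2(M)$ used in the first step.
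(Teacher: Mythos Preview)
Your opening move---feeding Lemma~\ref{k-partition} the decomposition $k=2+\dots+2$ and reducing everything to the single number $\barl(\alpha_2(Q_2(M)))$---is exactly the paper's approach to the non-compact bound. Your identification of $Q_2(M)/\Sigma_2$ with $\mathbb P(\tau M)$ and of $\alpha_2$ with the tautological line bundle is correct, and the Gysin formula $\pi_!(c^{\,n-1+d})=\bar w_d(\tau M)$ indeed yields $\barl(\alpha_2(Q_2(M)))\ge n-1+\barl(\tau M)$ (in fact equality holds; this is also what Lemma~\ref{dim-plus-barl-pow2} records for $q=2$). Note, however, that the paper's proof quotes from Conner--Floyd the value $n+\barl(\tau M)$, one larger than yours; with your value, Lemma~\ref{k-partition} gives only $m\ge\tfrac{k}{2}(n+\barl(\tau M))+1$, which is $\tfrac{k}{2}$ short of the stated non-compact bound. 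So either you must squeeze one more unit out of the $\mathbb P(\tau M)$ computation, or accept the Conner--Floyd input as a black box the way the paper does; the ``refinement below'' will not supply this missing main-term unit.

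The genuine gap is your plan for the additive refinements. The paper does \emph{not} pass to the nested space $Q_k(M)$ with its wreath-product symmetry, and it does not run a Poincar\'e-duality or Euler-class argument on $Q_k(M)/\Sy_k$. The compact improvement comes from a much more concrete substitution inside the proof of Lemma~\ref{k-partition}: the \emph{last} factor $Q_2(M,\delta_l)$ (the one at the largest scale, so that the distance inequalities~(\ref{dist-incr}) remain valid) is replaced by the ``thick'' pair space
\[
R^2(M)=\{(x,y)\in M\times M:\dist(x,y)\ge\delta\},
\]
the complement of a tubular neighbourhood of the diagonal. For compact $M$ this is a compact manifold with boundary homotopy equivalent to $F_2(M)$, and results of Wu and McCrory give $\barl(\alpha_2(R^2(M)))=\barl(\alpha_2(Q_2(M)))+1$. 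That single substitution in the sum of Lemma~\ref{k-partition} buys exactly the extra $+1$ in the compact case. Your proposed detour through $Q_k(M)$ would at best reproduce the product bound you already have, since the restriction from $H^*_{\Sy_k}(Q_k(M))$ to the product of the $H^*_{\Sigma_2}(Q_2(M))$'s is precisely what underlies Lemma~\ref{k-partition} in the first place.
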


\begin{proof}
In~\cite{cf1960} it is shown that 
$$
\barl(\alpha_2(Q_2(M))) = n + \barl (\tau M),
$$
and the case of non-compact $M$ follows from Lemma~\ref{k-partition}. For compact $M$ (see also~\cite{ha1996}) it is possible to replace the last $Q_2(M)$ (see the proof of Lemma~\ref{k-partition} below), which is the space of pairs in $M$ with distance $\delta$, by the space $R^2(M)$, which is the space of pairs in $M$ with distance $\ge \delta$. In~\cite{wu1958,mcc1978} it is shown that 
$$
\barl(\alpha_2(R^2(M))) = \dim M + \barl (\tau M) + 1,
$$
and the estimate on $m$ increases by $1$.
\end{proof}

\section{Proof of Lemma~\ref{k-partition}} 

If the manifold $M$ is compact, then $Q_q(M)$ is a compact manifold again. If $M$ is not compact then we use the following convention. We assume that there exists a compact subset $C\subset M$ (it can be chosen to be a compact manifold with boundary) such that the cohomology map $H^*(M; \mathbb F_2)\to H^*(C; \mathbb F_2)$ is injective, at least on some given finite-dimensional subspace of $H^*(M; \mathbb F_2)$. If we have to make $Q_q(M)$ compact, we consider it as a bundle over $M$, and restrict it to a bundle over $C$. 

We also suppose $M$ to have some Riemannian metric.

\begin{defn}
Denote by $d(Q_q(M,\delta))$ and $D(Q_q(M, \delta))$ the minimum and the maximum distance between some pair of points in a configuration from $Q_q(M,\delta)$. They exist and they are positive under the compactness assumptions above, and they depend continuously on $\delta$.
\end{defn}

We have some freedom to choose $\delta_i$'s in the definitions of $Q_{q_i}(M, \delta_i)$. We are going to choose them in such a way that for each $i=2,\ldots, l$
\begin{equation}
\label{dist-incr}
d(Q_{q_i}(M, \delta_i)) > \sum_{j=1}^{i-1} q_i D(Q_{q_j}(M, \delta_j)),
\end{equation}
and so that the last $\delta_l$ is less than the injectivity radius of $M$. From the continuous dependance of $d(Q_{q_i}(M, \delta_i))$ and $D(Q_{q_i}(M, \delta_i))$ it is possible to satisfy these inequalities if the first $\delta_1$ is chosen small enough. 

Denote by $G=\Sy_{q_1}\times \dots \times \Sy_{q_l}$ the natural symmetry group of $Q_{q_1}(M)\times\dots\times Q_{q_l}(M)$. Now Lemma~\ref{k-partition} is deduced from the following.

\begin{lem}
Under the above assumptions there exists a fiberwise $G$-equivariant map of vector bundles
$$
g: \mathbb R\times \prod_{i=1}^l \alpha_{q_i}(Q_{q_i}(M,\delta_i))\to \mathbb R^m,
$$
where $G$ acts trivially on $\mathbb R$ and $\mathbb R^m$, these two spaces are considered as bundles over one point.
\end{lem}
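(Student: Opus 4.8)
The strategy is to construct the equivariant bundle map $g$ in two stages: first produce a $G$-equivariant map from the total space of $\mathbb R\times\prod_i\alpha_{q_i}(Q_{q_i}(M,\delta_i))$ into a copy of the configuration-space bundle $V_k\times F_k(M)$, and then compose with a $\Sigma_k$-equivariant bundle map coming from a hypothetical $k$-regular embedding $f:M\to\mathbb R^m$ (which exists by the contrapositive assumption in Lemma~\ref{k-partition}). The first stage is the geometric heart of the argument; the second stage is formal.

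\emph{Stage 1: assembling a $k$-point configuration.} Given a point $x\in M$ lying over a configuration in $Q_{q_l}(M,\delta_l)$ (the ``outermost'' level), the nested construction of the $Q_{q_i}$ produces, after unwinding, a $q_l$-tuple of points in $M$; but we must glue in the smaller configurations $Q_{q_1},\dots,Q_{q_{l-1}}$ at disjoint locations. Here is where the distance inequalities~(\ref{dist-incr}) do the work: choosing $\delta_1\ll\delta_2\ll\cdots\ll\delta_l$ so that~(\ref{dist-incr}) holds, one places the configuration from $Q_{q_{i}}$ inside a ball around one point of the configuration already built from the first $i-1$ blocks, at a scale so small that it does not collide with any previously placed point. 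Concretely I would, over each $x\in M$, take the union of the (translated, rescaled) $q_1,\dots,q_l$ configurations; inequality~(\ref{dist-incr}) guarantees the $q_1+\cdots+q_l=k$ resulting points are pairwise distinct, so we land in $F_k(M)$. On the representation side, $A_{q_i}\subset V_{q_i}$ (zero-sum vectors) and the product $\bigoplus A_{q_i}$ sits inside $V_k=V_{q_1}\oplus\cdots\oplus V_{q_l}$ as a codimension-$(l-1)$ subspace; adding the trivial $\mathbb R$ compensates one dimension, so $\mathbb R\oplus\bigoplus_i A_{q_i}$ embeds $G$-equivariantly into $V_k$ (with $G\hookrightarrow\Sigma_k$ acting block-diagonally), covering the base map $\prod Q_{q_i}(M,\delta_i)\to F_k(M)$. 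This gives a fiberwise $G$-equivariant map $\mathbb R\times\prod_i\alpha_{q_i}(Q_{q_i}(M,\delta_i))\to V_k\times F_k(M)=\nu_k(M)$ (restricted over the image).

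\emph{Stage 2: using the regular embedding.} If $f:M\to\mathbb R^m$ is linearly $k$-regular, then for every configuration $(x_1,\dots,x_k)\in F_k(M)$ the vectors $f(x_1),\dots,f(x_k)$ are linearly independent, so the linear map $V_k\to\mathbb R^m$ sending the $i$-th basis vector to $f(x_i)$ is injective on each fiber; this is exactly the statement, from \cite{coha1978} and the discussion around Lemma~\ref{inv-bundle}, that $\nu_k(M)$ admits a $\Sigma_k$-equivariant fiberwise injection into $\mathbb R^m$ (with $\Sigma_k$ acting trivially on $\mathbb R^m$). Restricting this injection to the sub-bundle produced in Stage~1 and precomposing yields the desired $G$-equivariant fiberwise map $g:\mathbb R\times\prod_i\alpha_{q_i}(Q_{q_i}(M,\delta_i))\to\mathbb R^m$. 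To then deduce Lemma~\ref{k-partition}, one observes that a fiberwise injection of a bundle $\eta$ of rank $r$ into trivial $\mathbb R^m$ forces $\bar w_j(\eta)=0$ for $j>m-r$, hence $m-r\ge\barl(\eta)$ with $\eta=\mathbb R\oplus\prod_i\alpha_{q_i}(Q_{q_i}(M))$ of rank $1+\sum(q_i-1)=k-l+1$; the additivity of $\barl$ over $\times$-products (noted before the lemma) gives the stated bound, and $\barl(\mathbb R)=0$.

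\emph{Main obstacle.} The delicate point is not the representation-theoretic bookkeeping but the geometric gluing in Stage~1: one must verify that the inductively defined nesting of the $Q_{q_i}$, with the scale separation~(\ref{dist-incr}), genuinely produces pairwise distinct points and does so continuously and $G$-equivariantly over all of $M$ (or over the compact core $C$ in the non-compact case), including checking that the ``place block $i$ near one chosen point of the partial configuration'' recipe is well-defined and compatible with the symmetry groups $\Sy_{q_i}$ acting by swapping consecutive blocks. Making this placement canonical — e.g.\ always attaching the next block at a fixed point of the current configuration using the exponential map, which is injective below the injectivity radius — is what the hypothesis ``$\delta_l$ less than the injectivity radius'' is for, and is the step that requires care.
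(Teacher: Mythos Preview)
Your Stage~1 contains a genuine gap: the inequality~(\ref{dist-incr}) does \emph{not} guarantee that the union of the $l$ configurations consists of $k$ pairwise distinct points, so you cannot factor through $F_k(M)$. The quantities $d(Q_{q_i}(M,\delta_i))$ and $D(Q_{q_i}(M,\delta_i))$ are the minimum and maximum distance between two points \emph{within a single configuration} of $Q_{q_i}$; they say nothing about distances between a point of the $i$-th configuration and a point of the $j$-th one. For instance with $l=2$, $q_1=q_2=2$, a pair $(x_1,x_2)\in Q_2(M,\delta_1)$ and a pair $(y_1,y_2)\in Q_2(M,\delta_2)$ can perfectly well satisfy $x_1=y_1$, regardless of how $\delta_1,\delta_2$ are chosen. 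Your alternative idea of ``placing the configuration from $Q_{q_i}$ inside a ball around one chosen point of the partial configuration'' would require singling out a point, which breaks $\Sy_{q_j}$-equivariance, and in any case changes the base space away from $\prod_i Q_{q_i}(M,\delta_i)$.

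The paper's proof avoids $F_k(M)$ entirely. It writes down $g$ directly as
\[
g(t; (x^i_j); (w^i_j)) \;=\; t\sum_{i,j} f(x^i_j) \;+\; \sum_{i,j} w^i_j\, f(x^i_j),
\]
which is manifestly $G$-equivariant, and then proves fiberwise injectivity even when some $x^i_j$ coincide for different $(i,j)$. The argument is combinatorial: on the vertex set $V=\{x^i_j\}$ one draws an edge for each consecutive pair $(x^i_j,x^i_{j+1})$, and~(\ref{dist-incr}) is used to show this graph has no cycles (any putative cycle would force two distinct points of some $Q_{q_i}$-configuration to be joined by a path through lower-index edges of total length less than $d(Q_{q_i})$). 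The tree structure then makes the images $g(e^i_j)=f(x^i_j)-f(x^i_{j+1})$ upper-triangular in the basis $\{f(v):v\in V\}$ (which is independent since $|V|\le k$ and $f$ is $k$-regular), and $g(e_0)=\sum f(x^i_j)$ is orthogonal to all of these. So the role of~(\ref{dist-incr}) is acyclicity of this auxiliary graph, not disjointness of the point sets; your Stage~1 misreads what the inequality buys.
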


\begin{proof}
Remind that the bundles $\alpha_{q_i}(Q_{q_i}(M,\delta_i))$ are simply the products $A_{q_i}\times Q_{q_i}(M, \delta_i)$. More precisely, an element of $A_{q_i}\times Q_{q_i}(M, \delta_i)$ is a $q_i$-tuple of points $(x^i_1,\ldots, x^i_{q_i})$ in $M$ along with a set of real coefficients $w^i_1, \ldots, w^i_{q_i}$ with zero sum. The action of $\Sy_{q_i}$ is given by the permutation of the points, and the corresponding permutation of the coefficients.

The required map $g$ of bundles is defined as follows. Suppose we have $l$ sets of $q_i$ ($i=1,\ldots, l$) points each, let the points $x^i_j$ be as above. Suppose we also have the respective coefficients $w^i_j$, and another coefficient $t$. Denote the map $g$ on this combination by
$$
g(\ldots) = t\sum_{i, j} f(x^i_j) + \sum_{i,j} w^i_j f(x^i_j).
$$
This map is obviously $G$-equivariant (if the points are permuted, the coefficients are permuted accordingly), so it is left to show its injectivity on fibers.

The fiber of the vector bundle 
$$
\eta : \mathbb R\times \prod_{i=1}^l \alpha_{q_i}(Q_{q_i}(M,\delta_i))\to \prod_{i=1}^l Q_{q_i}(M,\delta_i)
$$
over the point set $x^i_j$ has the following base: the first vector $e_0$ is given by 
$t=1, w^i_j=0$, then for a given $i$ and $j=1,\ldots, q_i-1$ we have a vector $e^i_j$ with coordinates $w^i_j=1$, $w^i_{j+1}=-1$, the other coordinates being zero. Let us show that the images of the system $\{e_0\}\cup\{e^i_j\}_{i=1,\ldots,l,\ j=1,\ldots,q_i-1}$ are linearly independent.

Suppose that the points $x^i_j$ constitute the point set $V\subset M$, note that the points $x^i_j$ may coincide for different index pairs $(i,j)$. Define the graph $T$ on vertices $V$ as follows: the images of all the pairs $(x^i_j, x^i_{j+1})$ form an edge. 

We claim that $T$ is a tree. Indeed, suppose $T$ has a simple cycle $C\subset V$, consider the maximum index $i$ appearing in an edge $(u,v) = (x^i_j, x^i_{j+1})$ of this cycle. Let $(w, y)$ be the next edge of $C$ with the same index $i$, it may happen that $(w,y) = (u,v)$. The segment of $C$ between $v$ and $w$ goes by the edges $(x^m_j, x^m_{j+1})$ with $m<i$, hence its total length is at most 
$$
\sum_{j=1}^{i-1} q_i D(Q_{q_j}(M, \delta_j)),
$$
but the points $v$ and $w$ are different points in some configuration of $Q_{q_i}(M, \delta_i)$, and the distance between them is at least $d(Q_{q_i}(M, \delta_i))$, which contradicts (\ref{dist-incr}). Thus $T$ is a tree.

Now we see that the images $g(e^i_j)$, written in the basis $f(V)$ (it is a basis since $f$ if linearly $k$-regular), have nonzero coordinate pairs that form a tree $T$. The tree can be reconstructed by adding one new edge and one new vertex at a time, hence the coordinates of these vectors form an upper triangular matrix with nonzero diagonal, after such a reordering of vertices and vectors (edges). In follows that $e^i_j$ are linearly independent. The vector $e_0$ is orthogonal to all of them (if the scalar product is Euclidean in the basis $f(V)$), hence it is independent of the other $e^i_j$.
\end{proof}

\section{External Steenrod squares}
\label{ext-sq-sec}

In order to describe the $\Sy_q$-equivariant cohomology of $\Qo_q(\mathbb R^n)$ and the similar spaces, we have to use the construction of external Steenrod squares. We mostly follow~\cite[Ch.~V]{brs1976}, where the Steenrod squares were defined in the unoriented cobordism. The cobordism was defined using mock bundles, if we allow the mock bundles to have codimension $2$ singularities, we obtain ordinary cohomology modulo $2$. In the sequel we consider the cohomology mod $2$ and omit the coefficients from notation. The similar construction was used in~\cite{hung1990} to calculate the cohomology of $\Qo_q(\mathbb R^n)$, based on the Steenrod decomposition theorem for the cohomology of $(K\times K\times S^n)/\mathbb Z_2$ instead of mock bundles.

The construction of the external Steenrod squares on a polyhedron $K$ starts with the fiber bundle (for some integer $n>0$)
$$
\sigma_{K, n}: (K\times K\times S^n)/\mathbb Z_2\to S^n/\mathbb Z_2=\mathbb RP^n.
$$
The group $\mathbb Z_2$ acts by permuting $K\times K$, and antipodally on $S^n$.
Consider a cohomology class $\xi\in H^*(K)$, represented by a mock bundle $\xi : E(\xi)\to K$. Then the mock bundle 
$$
(\xi\times \xi\times S^n)/\mathbb Z_2\to (K\times K\times S^n)/\mathbb Z_2
$$
is the external Steenrod square $\Sqe \xi$. The operation $\Sqe$ is evidently multiplicative, in~\cite[Ch.~V, Proposition~3.3]{brs1976} it is claimed that $\Sqe$ is also additive. We are going to show that it is not true, first we need a definition.

\begin{defn}
The difference $\Sqe(\xi+\eta) - \Sqe\xi - \Sqe\eta$ is represented by the mock bundle
$$
\xi \odot \eta = (\xi\times\eta\times S^n + \eta\times\xi\times S^n)/\mathbb Z_2,
$$
where $\mathbb Z_2$ exchanges the components $\xi\times\eta$ and $\eta\times\xi$. 
\end{defn}

Since the fiber of $\sigma_{K, n}$ is $K\times K$, the restriction of $\xi\odot \eta$ to the fiber is $\xi\times\eta+\eta\times\xi$, which is nonzero if $\eta\not=\xi$ as cohomology classes. Thus the operation $\odot$ is not trivial.

We need a lemma about the $\odot$-multiplication.

\begin{lem}
\label{odot-ann}
Denote by $c$ the hyperplane class in $H^1(\mathbb RP^n)$. Then for any $\xi,\eta\in H^*(K)$ the product 
$$
(\xi\odot\eta)\smile\sigma_{K,n}^*(c) = 0
$$
in $H^*((K\times K\times S^n)/\mathbb Z_2)$.
\end{lem}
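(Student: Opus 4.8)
The plan is to exploit the geometry of the bundle $\sigma_{K,n}\colon (K\times K\times S^n)/\mathbb Z_2\to\mathbb{RP}^n$ together with the fact that $\sigma_{K,n}^*(c)$ is the first Stiefel--Whitney class of the line bundle associated to the $\mathbb Z_2$-cover. The key observation is that the mock bundle $\xi\odot\eta$ carries a free $\mathbb Z_2$-action, induced by the antipodal map on the $S^n$ factor, in a way compatible with the deck transformation covered by $\sigma_{K,n}$. More precisely, write $Y=(K\times K\times S^n)/\mathbb Z_2$ and let $p\colon K\times K\times S^n\to Y$ be the double cover. Then the total space of $\xi\odot\eta$ is the $\mathbb Z_2$-quotient of $E(\xi)\times E(\eta)\times S^n\sqcup E(\eta)\times E(\xi)\times S^n$, where $\mathbb Z_2$ swaps the two pieces and acts antipodally on $S^n$; after restricting to the preimage of $Y$ this still admits a free involution over $Y$ lying over the free deck involution.

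With this in hand the argument is: first I would identify $\sigma_{K,n}^*(c)$ as the Euler class (equivalently $w_1$) of the real line bundle $L$ on $Y$ classified by the double cover $p$. Next I would observe that the pullback of $\xi\odot\eta$ along $p$, i.e. its restriction to the trivialized double cover $K\times K\times S^n$, splits as a disjoint union of two copies interchanged by the involution — so as a mock bundle on the cover it is ``the double'' of $\xi\times\eta\times S^n$. The standard transfer/Gysin mechanism then says that $\xi\odot\eta$ is itself the image under the transfer $p_!\colon H^*(K\times K\times S^n)\to H^*(Y)$ of the class $\xi\times\eta\times 1$. The final step is the projection-formula identity $p_!(a)\smile w_1(L)=p_!\bigl(a\smile p^*w_1(L)\bigr)$, combined with the fact that $p^*L$ is trivial so $p^*w_1(L)=0$; hence $(\xi\odot\eta)\smile\sigma_{K,n}^*(c)=p_!(\xi\times\eta\times 1)\smile w_1(L)=p_!(0)=0$.

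I expect the main obstacle to be making the identification of $\xi\odot\eta$ with a transfer class fully rigorous in the mock-bundle (codimension-$2$-singular) formalism of~\cite{brs1976}, rather than in ordinary singular cohomology where the transfer and projection formula are textbook. One has to check that the geometric double cover $p$ induces a transfer on mock-bundle cohomology, that this transfer agrees with the algebraic one, and that the class represented geometrically by $\xi\odot\eta$ is indeed $p_!$ applied to $\xi\times\eta\times S^n$. An alternative, perhaps cleaner, route that avoids developing transfer in this setting is purely geometric: represent $c$ by the mock bundle over $\mathbb{RP}^n$ given by $S^{n-1}/\mathbb Z_2=\mathbb{RP}^{n-1}\hookrightarrow\mathbb{RP}^n$ (a codimension-$1$ mock bundle), so that $\sigma_{K,n}^*(c)$ is represented by $(K\times K\times S^{n-1})/\mathbb Z_2\subset Y$; then the cup product $(\xi\odot\eta)\smile\sigma_{K,n}^*(c)$ is represented by the transverse intersection mock bundle, whose total space is the $\mathbb Z_2$-quotient of $\bigl(E(\xi)\times E(\eta)\sqcup E(\eta)\times E(\xi)\bigr)\times S^{n-1}$. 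One then exhibits an explicit mock-bundle null-bordism of this space: since $S^{n-1}$ bounds the disk $D^n$ equivariantly (with the involution extended as the antipodal-on-the-boundary cone, which is free away from the center — but the center is a single codimension-$n\le$-arbitrary point, hence within the allowed singularities only if $n\ge 2$), the product with $D^n$ gives a bordism whose boundary is exactly the intersection mock bundle. Handling the fixed point $0\in D^n$ — removing a small equivariant ball around it and checking the resulting singularity is codimension $\ge 2$, which needs $n\ge 2$, matching the hypothesis $n>0$ in~\cite{brs1976} refined to $n\ge 2$ — will be the delicate point; I would flesh out whichever of the two approaches the surrounding text makes most natural.
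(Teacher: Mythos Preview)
Your first approach via the transfer is correct and is a genuinely different, more conceptual route than the paper's. The identification $\xi\odot\eta=p_!(\xi\times\eta)$ for the double cover $p\colon K\times K\times S^n\to Y$ is exactly right in the mock-bundle formalism (the free $\mathbb Z_2$-quotient of the two swapped copies is canonically one copy, composed with $p$), and the projection formula plus $p^*\sigma_{K,n}^*(c)=w_1(p^*L)=0$ finishes. This argument has the advantage of working uniformly and of making clear why the whole image of the transfer is annihilated by $c$.

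The paper instead gives the explicit geometric null-bordism that your second approach was reaching for, but without any fixed-point difficulty. Represent $(\xi\odot\eta)\smile\sigma_{K,n}^*(c)$ by $\alpha/\mathbb Z_2$, where $\alpha=\xi\times\eta\times S^{n-1}+\eta\times\xi\times S^{n-1}$ and $S^{n-1}\subset S^n$ is the equator. Now split $S^n$ into hemispheres $H^+,H^-$ and set
\[
\beta=\xi\times\eta\times H^+ + \eta\times\xi\times H^-,
\]
a mock bundle with boundary over $K\times K\times S^n$. The antipodal map exchanges $H^+$ with $H^-$ and simultaneously swaps the two summands, so $\mathbb Z_2$ acts \emph{freely} on $\beta$; then $\partial(\beta/\mathbb Z_2)=\alpha/\mathbb Z_2$ as mock bundles over $Y$. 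Your worry about a fixed point came from trying to cone $S^{n-1}$ to an abstract disk $D^n$; the point is that the ``disk'' should be the hemisphere already sitting inside $S^n$, which both makes the bordism live over $Y$ and removes any fixed locus. (Note also that in your $D^n$ picture the $\mathbb Z_2$-action on the \emph{disjoint union} would already be free, since it swaps the two components; the genuine issue is getting the bordism to map to $Y$, which the hemisphere choice handles automatically.)
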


\begin{proof}
Consider the mock bundle 
$$
\alpha = \xi\times\eta\times S^{n-1} + \eta\times\xi\times S^{n-1},
$$
which has the natural $\mathbb Z_2$-action, it represents $(\xi\odot\eta)\smile\sigma_{K,n}^*(c)$ after taking the quotient by the $\mathbb Z_2$-action.

Now divide $S^n$ into the upper and the lower half-spheres $H^+$ and $H^-$. Consider the mock bundle (with boundary) 
$$
\beta = \xi\times\eta\times H^+ + \eta\times\xi\times H^-
$$
over $K\times K\times S^n$. The action of $\mathbb Z_2$ on $\beta$ is defined by permuting the summands and the antipodal identification of $H^+$ and $H^-$. Now it is clear that $\alpha$ is the boundary of $\beta$, and $\alpha/\mathbb Z_2$ is the boundary of $\beta/\mathbb Z_2$. Hence it is zero in the cohomology, and the similar statement is true for the unoriented bordism.
\end{proof}

We have to introduce another operation.

\begin{defn}
Let $\xi :E(\xi)\to K$, $\eta : E(\eta)\to K$ be two mock bundles. Let $p_+,p_-$ be the north and the south poles of $S^n$. Denote the mock bundle over $(K\times K\times S^n)/\mathbb Z_2$ by
$$
\iota(\xi\times\eta) = (\xi\times\eta\times\{p_+\} + \eta\times\xi\times\{p_-\})/\mathbb Z_2.
$$
\end{defn}

It is obvious from the definition that we have relation
$$
\iota(\xi\times\eta)\smile \sigma_{K,n}^*(c) = 0,
$$
it is also obvious that 
$$
\iota(\xi\times\xi) = \Sqe\xi\smile \sigma_{K,n}^*(c)^n.
$$
Let us describe the $\smile$-multiplication of the Steenrod squares, $\odot$, and $\iota(\ldots)$ classes. The following formulas are obvious from the definition:
$$ 
(\xi\odot\eta)\smile(\zeta\odot\chi) = (\xi\smile\zeta)\odot(\eta\smile\chi) + (\xi\smile\chi)\odot(\eta\smile\zeta),
$$
$$
(\xi\odot\eta)\smile(\Sqe\zeta) = (\xi\smile\zeta)\odot(\eta\smile\zeta),
$$
$$
(\xi\odot\eta)\smile\iota(\zeta\odot\chi) = \iota((\xi\smile\zeta)\times(\eta\smile\chi)) + \iota((\xi\smile\chi)\times(\eta\smile\zeta)),
$$
$$
\Sqe\xi\smile\Sqe\eta = \Sqe(\xi\smile\eta),
$$
$$
\Sqe\xi\smile \iota(\eta\times\zeta) = \iota((\xi\smile\eta)\times(\xi\smile\zeta)),
$$
$$
\iota(\xi\times\eta)\smile\iota(\zeta\times\chi)=0.
$$
Now we can describe the structure of the cohomology $H^*( (K\times K\times S^n)/\mathbb Z_2)$.

\begin{defn}
Consider a graded $\mathbb F_2$-algebra $A$ with linear basis $v_1,\ldots, v_m$. Denote by $A\odot A$ the subalgebra of $A\otimes A$, invariant w.r.t. $\mathbb Z_2$-action by permuting the factors. The linear base of $A$ is 
$$
\{v_i\otimes v_i\}_{i=1}^n,\ \{v_i\otimes v_j+v_j\otimes v_i\}_{i<j}. 
$$
\end{defn}

\begin{defn}
Consider a graded $\mathbb F_2$-algebra $A$ with linear basis $v_1,\ldots, v_m$. Denote by $\iota(A\otimes A)$ the quotient vector space $A\otimes A/(v_i\otimes v_j + v_j\otimes v_i)$. As $\mathbb F_2$-algebra it has zero multiplication.
\end{defn}

\begin{lem}
\label{ext-sq}
The maps $\Sqe$ and $\odot$ map the algebra $H^*(K)\odot H^*(K)$ to $H^*( (K\times K\times S^n)/\mathbb Z_2)$. The map $\iota$ maps $\iota(H^*(K)\otimes H^*(K))$ to $H^*( (K\times K\times S^n)/\mathbb Z_2)$. The images of these maps together with the generator $c\in H^1(S^n/\mathbb Z_2)$ multiplicatively generate the cohomology $H^*( (K\times K\times S^n)/\mathbb Z_2)$.

The latter cohomology can be described as the quotient of $H^*(K)\odot H^*(K)\otimes \mathbb F_2[c]\oplus \iota(H^*(K)\otimes H^*(K))$ by the relations
$$
c^{n+1}=0,\ (\xi\odot\eta)\otimes c = 0, \Sqe\xi\otimes c^n = \iota(\xi\otimes\xi).
$$
\end{lem}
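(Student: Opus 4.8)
The plan is to compute $H^*\bigl((K\times K\times S^n)/\mathbb Z_2\bigr)$ with the Leray--Serre spectral sequence of the fibre bundle $\sigma_{K,n}$, whose fibre is $K\times K$, and then to match the answer with the presented algebra through the mock bundles $\Sqe$, $\odot$, $\iota$ and $c$. Write $E=(K\times K\times S^n)/\mathbb Z_2$; with $\mathbb F_2$-coefficients the $E_2$-page is $E_2^{p,q}=H^p\bigl(\mathbb RP^n;\mathcal H^q(K\times K)\bigr)$ for the local system whose monodromy is the transposition of the two factors of $K\times K$.

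First I would describe $E_2$. Fixing a homogeneous $\mathbb F_2$-basis $v_1,\dots,v_m$ of $H^*(K)$, split the $\mathbb F_2[\mathbb Z_2]$-module $H^*(K)\otimes H^*(K)$ into the trivial summands $\langle v_i\otimes v_i\rangle$ and the free summands $\langle v_i\otimes v_j,\ v_j\otimes v_i\rangle$ ($i<j$). Over a trivial summand the coefficients are constant, contributing $H^*(\mathbb RP^n;\mathbb F_2)=\mathbb F_2[c]/(c^{n+1})$, i.e.\ one class in every base degree $0,\dots,n$. A free summand carries the local system which is the direct image of $\mathbb F_2$ along the double cover $S^n\to\mathbb RP^n$, so it contributes $H^*(S^n;\mathbb F_2)$, i.e.\ one class each in base degrees $0$ and $n$ and nothing in between. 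Thus $E_2$ is: in base degree $0$, the ring $H^*(K)\odot H^*(K)$; in base degrees $1,\dots,n-1$, the span of the $c^p\otimes(v_i\otimes v_i)$; and in base degree $n$, the span of the $c^n\otimes(v_i\otimes v_i)$ together with the $\binom m2$ classes from the free summands. In particular $\dim_{\mathbb F_2}E_2=m(n+1)+2\binom m2$.

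Next I would prove collapse at $E_2$ and then exhibit a basis of $H^*(E)$ out of the named classes. For collapse: $E_2^{0,*}=H^*(K)\odot H^*(K)$ consists entirely of fibre restrictions of the globally defined mock bundles $\Sqe\xi_i$ and $\xi_i\odot\xi_j$, hence of permanent cycles, so it survives to $E_\infty$; since $c=\sigma_{K,n}^*(c)\in H^1(E)$ is a permanent cycle and the spectral sequence is a module over $H^*(\mathbb RP^n)$, the $c^p$-multiples of these survivors survive too, which exhausts the trivial summands; and the free summands live only in base degrees $0$ and $n$, where (the former being in the image of the fibre restriction, the latter leaving no room for a differential) no nonzero $d_r$ with $r\ge 2$ can originate or land. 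Hence $E_2=E_\infty$ and $\dim H^*(E)=m(n+1)+2\binom m2$. The classes $\Sqe\xi_i\smile c^p$ ($0\le p\le n$), $\xi_i\odot\xi_j$ and $\iota(\xi_i\otimes\xi_j)$ ($i<j$) are exactly this many; restriction to a fibre detects the base-degree-$0$ ones, multiplication by $c$ detects the $\Sqe\xi_i\smile c^p$ with $1\le p\le n-1$, and the classes $\Sqe\xi_i\smile c^n$ and $\iota(\xi_i\otimes\xi_j)$ — which all lie in the top filtration $F^n=E_\infty^{n,*}$, since $\iota(\xi_i\otimes\xi_j)$ restricts to $0$ over the equatorial $\mathbb RP^{n-1}$ dual to $c$ — are seen to be independent by realizing them as transfers along the double cover $K\times K\times S^n\to E$, whose transfer has kernel equal to the image of the pullback. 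So these classes form an $\mathbb F_2$-basis of $H^*(E)$, and in particular $\Sqe\xi\smile c^n=\iota(\xi\otimes\xi)$.

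Finally, this basis is precisely the image of the obvious basis of $\bigl(H^*(K)\odot H^*(K)\otimes\mathbb F_2[c]\bigr)\oplus\iota(H^*(K)\otimes H^*(K))$ modulo $c^{n+1}=0$, $(\xi\odot\eta)\otimes c=0$ (which holds in $H^*(E)$ by Lemma~\ref{odot-ann}) and $\Sqe\xi\otimes c^n=\iota(\xi\otimes\xi)$. The displayed $\smile$-multiplication formulas preceding the statement show that the tautological map from the presented algebra to $H^*(E)$ is a ring homomorphism killing these relations, and the count above shows it is a graded-vector-space isomorphism; hence it is the asserted ring isomorphism, and multiplicative generation by $\Sqe$, $\odot$, $\iota$ and $c$ is just its surjectivity. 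The step I expect to be the real obstacle is confirming collapse together with the linear independence of the top-filtration classes $\iota(\xi_i\otimes\xi_j)$ — equivalently, pinning down the transfer of that double cover — since everything else is bookkeeping with the mock-bundle definitions; an alternative to the transfer argument is to run the whole computation by induction on $n$ via the decomposition $S^n=H^+\cup_{S^{n-1}}H^-$ and a Mayer--Vietoris sequence.
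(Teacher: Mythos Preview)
Your proof follows essentially the same route as the paper's: the Leray--Serre spectral sequence of $\sigma_{K,n}$, the decomposition of the local system $H^*(K\times K)$ into trivial and free $\mathbb Z_2$-summands (the latter identified with $\pi_*\mathbb F_2$ for $\pi:S^n\to\mathbb RP^n$), collapse at $E_2$ because the edge column consists of fibre restrictions of the globally defined classes $\Sqe\xi$ and $\xi\odot\eta$ while the last column admits no outgoing differentials, and finally matching the resulting basis with the named classes. Your treatment is a bit more explicit about the extension problem in top filtration (invoking the transfer of the double cover), whereas the paper simply appeals to the mock-bundle definitions of $\Sqe$, $\odot$, $\iota$ to pass from $E_\infty$ to $H^*$.
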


Compare this lemma with~\cite[Theorem~2.1]{hung1990}, see also~\cite{step1962}. Note the important particular case: if $n\to\infty$, we image of $\iota(\ldots)$ disappears, and we also can take the quotient of $H^*(K)\odot H^*(K)$ by the linear span of all $\xi\odot\eta$ for $\xi,\eta\in H^*(K)$. Hence, the cohomology $H^*((K\times K\times S^\infty)/\mathbb Z_2)$ has a quotient isomorphic to $\Sqe(H^*(K))\otimes \mathbb F_2[c]$. Here $\Sqe(H^*(K))$ is the same algebra as $H^*(K)$, but with twice larger degrees.  

\begin{proof}
The Leray--Serre spectral sequence for $\sigma_{K, n}$ starts with 
$$
E_2^{p,q} = H^p(\mathbb RP^n; H^q(K\times K)),
$$
where $\mathbb Z_2=\pi_1(\mathbb RP^n)$ permutes the factors of $H^q(K\times K)=H^q(K)\otimes H^q(K)$. Let us decompose the coefficient sheaf $H^*(K\times K)$. If  $v_1,\ldots, v_m$ is the linear basis of $H^*(K)$, then an element $v_i\otimes v_i$ gives a subsheaf, isomorphic to the constant sheaf $\mathbb F_2$. The two elements $v_i\otimes v_j$ and $v_j\otimes v_i$ generate a non-constant sheaf $\mathcal A = \mathbb F_2\oplus \mathbb F_2$ with permutation action of $\pi_1(\mathbb RP^n)$. The cohomology $H^*(\mathbb RP^n; \mathcal A) = H^*(S^n; \mathbb F_2)$, since $\mathcal A$ is the direct image of $\mathbb F_2$ under the natural projection $\pi: S^n\to \mathbb RP^n$. Thus we know the additive structure of $E_2^{*,*}$.

The first column of $E_2$ consists of $\mathbb Z_2$-invariant elements of $H^*(K\times K)$, and all those elements are the restrictions of either $\Sqe\xi$ or $\xi\odot \eta$ to the fiber. Hence all the differentials of the spectral sequence are zero on the first column. The columns between the first and the last ($n$-th) are generated by multiplication with $c$, and the differentials are zero on them too. The last column is isomorphic to $\iota(H^*(K)\otimes H^*(K))$ and the differentials are zero on it from the dimension considerations.

Hence this spectral sequence collapses, that is $E_2=E_\infty$. Let $v_1,\ldots v_m$ be a linear base of $H^*(K)$. The first column of $E_2$ has the linear base 
$$
\{v_i\times v_i\}_{i=1}^m,\ \{v_i\times v_j+ v_j\times v_i\}_{1\le i<j\le m},
$$
the columns $E_2^{j, *}$ with $j=1,2,\ldots,n-1$ have the linear base
$$
\{(v_i\times v_i)c^j\}_{i=1}^m,
$$ 
and the last column has the linear base
$$
\{\iota(v_i\times v_j)\}_{i,j=1}^m.
$$
From the definition of $\Sqe$, $\odot$, and $\iota(\ldots)$ the final cohomology $H^*( (K\times K\times S^n)/\mathbb Z_2)$ is described the same way with $v_i\times v_i$ replaced by $\Sqe v_i$, and $v_i\times v_j+ v_j\times v_i$ replaced by $v_i\odot v_j$. From the relations on $\Sqe$, $\odot$, and $\iota$ it follows that the isomorphism $E_2\cong H^*( (K\times K\times S^n)/\mathbb Z_2)$ is an isomorphism of graded algebras.
\end{proof}

Now consider a vector bundle $\nu : E(\nu)\to K$ and define 
$$
\Sqe\nu : (E(\nu)\times E(\nu)\times S^n)/\mathbb Z_2\to (K\times K\times S^n)/\mathbb Z_2.
$$
The Stiefel--Whitney classes of $\Sqe\nu$ are described by the following lemma.

\begin{lem}
\label{sw-sq}
Let $\dim \nu = k$, and let the Stiefel--Whitney class of $\nu$ be 
$$
w(\nu) = w_0+w_1+\dots + w_k.
$$
Then
$$
w(\Sqe\nu) = \sum_{0\le i < j \le k} w_i\odot w_j + \sum_{i=0}^k (1+c)^{k-i} \Sqe w_i,
$$
where $c$ is the image of the hyperplane class in $H^1(\mathbb RP^n)$.
\end{lem}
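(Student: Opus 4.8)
The plan is to use the splitting principle to reduce $\nu$ to a sum of line bundles, compute $w(\Sqe\ell)$ for a single line bundle $\ell$ by hand, and then expand the resulting product. First I would reduce to the case $n=\infty$: for finite $n$ the stated formula is obtained by restricting the $n=\infty$ identity along the inclusion $(K\times K\times S^n)/\mathbb Z_2\hookrightarrow(K\times K\times S^\infty)/\mathbb Z_2$, which carries $c\mapsto c$, $\Sqe w_i\mapsto\Sqe w_i$, $w_i\odot w_j\mapsto w_i\odot w_j$ and restricts $\Sqe\nu$ to $\Sqe\nu$. Next, by the splitting principle I would replace $K$ by the flag bundle of $\nu$ and $\nu$ by its pullback, so that $\nu=\ell_1\oplus\dots\oplus\ell_k$ with $x_a:=w_1(\ell_a)$ and $w_i=\sigma_i(x_1,\dots,x_k)$; this is legitimate since a cohomology-injective map $K'\to K$ induces a cohomology-injective map on the construction $(\,\cdot\times\cdot\times S^\infty)/\mathbb Z_2$, as one reads off Lemma~\ref{ext-sq} (it is injective on each summand $H^*(-)\odot H^*(-)$ and $\mathbb F_2[c]$). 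Finally, directly from the definition one has $\Sqe(\xi\oplus\eta)=\Sqe\xi\oplus\Sqe\eta$ for vector bundles, so $w(\Sqe\nu)=\prod_{a=1}^kw(\Sqe\ell_a)$ by the Whitney formula.

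Next comes the line bundle case. For a line bundle $\ell$ with $x=w_1(\ell)$ the bundle $\Sqe\ell$ has rank $2$, so $w(\Sqe\ell)=1+w_1(\Sqe\ell)+w_2(\Sqe\ell)$. The class $e(\ell)=x$ is represented by the zero locus of a generic section of $\ell$, and applying $\Sqe$ to this mock bundle identifies the top class $w_2(\Sqe\ell)=e(\Sqe\ell)=\Sqe x$. For $w_1$: over a fibre $K\times K$ of $\sigma_{K,\infty}$ the bundle $\Sqe\ell$ is $p_1^*\ell\oplus p_2^*\ell$ (with $p_1,p_2$ the projections), so $w_1(\Sqe\ell)$ restricts to $x\times1+1\times x$, which is the fibre restriction of $1\odot x$; over $\mathbb{RP}^\infty=(\{\pt\}\times\{\pt\}\times S^\infty)/\mathbb Z_2$, where $\ell$ is trivial, $\Sqe\ell$ restricts to $(\mathbb R^2\times S^\infty)/\mathbb Z_2=\underline{\mathbb R}\oplus\gamma$ (the sign summand of the swap action on $\mathbb R^2$ giving the tautological line bundle $\gamma$ with $w_1(\gamma)=c$), so $w_1(\Sqe\ell)$ restricts to $c$. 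Since a degree-$1$ class is determined by these two restrictions (Lemma~\ref{ext-sq}), $w_1(\Sqe\ell)=c+1\odot x$. Hence, using $\Sqe1=1$ and $\Sqe(\xi+\eta)=\Sqe\xi+\Sqe\eta+\xi\odot\eta$,
$$
w(\Sqe\ell)=1+(c+1\odot x)+\Sqe x=c+\Sqe(1+x)=c+\Sqe(w(\ell)).
$$

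At this point $w(\Sqe\nu)=\prod_{a=1}^k\bigl(c+\Sqe(1+x_a)\bigr)$, and I would pass to the quotient of $H^*((K\times K\times S^\infty)/\mathbb Z_2)$ by the ideal $N$ of classes $\xi\odot\eta$. By Lemma~\ref{ext-sq} this quotient is $\Sqe(H^*(K))\otimes\mathbb F_2[c]$, on which $\Sqe$ is a ring homomorphism (doubling degrees), so modulo $N$
$$
w(\Sqe\nu)\equiv\prod_{a=1}^k\bigl((1+c)+\Sqe x_a\bigr)=\sum_{m=0}^k(1+c)^{k-m}\sigma_m(\Sqe x_1,\dots,\Sqe x_k)=\sum_{m=0}^k(1+c)^{k-m}\Sqe w_m,
$$
because $\sigma_m(\Sqe x_1,\dots,\Sqe x_k)=\Sqe\,\sigma_m(x_1,\dots,x_k)=\Sqe w_m$. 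Thus $w(\Sqe\nu)=\sum_m(1+c)^{k-m}\Sqe w_m+\zeta$ for some $\zeta\in N$. To pin down $\zeta$, restrict to a fibre $K\times K$: there $w(\Sqe\nu)$ becomes $\sum_{i,j}w_i\times w_j$ and $\sum_m(1+c)^{k-m}\Sqe w_m$ becomes $\sum_mw_m\times w_m$, so $\zeta|_{K\times K}=\sum_{i\ne j}w_i\times w_j=\sum_{i<j}(w_i\times w_j+w_j\times w_i)$, which is exactly the fibre restriction of $\sum_{i<j}w_i\odot w_j$. Since the fibre restriction is injective on $N$ — the classes $v_i\times v_j+v_j\times v_i$ with $i<j$ are linearly independent in $H^*(K\times K)$ — this forces $\zeta=\sum_{i<j}w_i\odot w_j$, which is the claimed formula.

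The step I expect to be the main obstacle is the line bundle computation, and specifically fixing the $c$-summand of $w_1(\Sqe\ell)$: the fibre restriction alone recovers the $1\odot x$ and $\Sqe x$ contributions but not the coefficient of $c$, so one genuinely needs the restriction to $\mathbb{RP}^\infty$ together with the description of $H^1$ from Lemma~\ref{ext-sq}. Once the clean form $w(\Sqe\ell)=c+\Sqe(w(\ell))$ is available, the rest is the routine symmetric-function bookkeeping above, using only that $\Sqe$ induces a ring homomorphism on the quotient by $N$ and that $N$ embeds under fibre restriction.
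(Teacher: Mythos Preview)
Your proof is correct and follows essentially the same route as the paper: reduce to $n$ large, use the splitting principle, compute $w(\Sqe\ell)$ for a line bundle by combining the fibre restriction with the restriction over $\mathbb{RP}^n$ (the paper phrases the latter as pulling back along a section $s:\mathbb{RP}^n\to(\{x\}\times\{x\}\times S^n)/\mathbb Z_2$), and then multiply out. The only organizational difference is that where the paper simply writes ``the result follows by removing parentheses'' in $\prod_a(1+x_a\odot 1+c+\Sqe x_a)$, you spell out the expansion by first working modulo the $\odot$-ideal $N$ and then recovering the $N$-part via fibre restriction; this is a perfectly valid and somewhat more transparent bookkeeping of the same computation.
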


\begin{proof}
Consider the case of one-dimensional $\nu$ first. Taking $n$ large enough we do not have to consider the image of $\iota(\ldots)$, then we can return to lesser $n$ by the natural inclusion 
$$
(K\times K\times S^n)/\mathbb Z_2\to (K\times K\times S^{n+m})/\mathbb Z_2. 
$$
The restriction of $\Sqe\nu$ to the fiber $K\times K$ has the Stiefel--Whitney class
$$
w(\nu\times\nu) = 1 + w_1(\nu)\times 1 + 1\times w_1(\nu) + w_1(\nu)\times w_1(\nu).
$$
Hence $w(\Sqe\nu)$ is either $1 + w_1(\nu)\odot 1 + \Sqe w_1(\nu)$, or $1 + w_1(\nu)\odot 1 + c + \Sqe w_1(\nu)$. Any point $x\in K$ gives a natural section
$$
s : S^n/\mathbb Z_2\to (\{x\}\times\{x\}\times S^n)/\mathbb Z_2
$$
of the bundle $\sigma_{K, n}$, and the bundle $s^*(\Sqe\nu)$ over $\mathbb RP^n$ is isomorphic to $\gamma\oplus\epsilon$, where $\gamma$ is the canonical bundle of the projective space, $\epsilon$ is the trivial bundle. Hence we must have 
$$
w(\Sqe\nu) = 1 + w_1(\nu)\odot 1 + c + \Sqe w_1(\nu).
$$

The general formula for $k>1$ follows from the splitting principle, suppose $\nu=\tau_1\oplus\dots\oplus\tau_k$, then
$$
w(\Sqe\nu) = \prod_{i=1}^k (1 + w_1(\tau_i)\odot 1 + c + \Sqe w_1(\tau_i)),
$$
and the result follows by removing parentheses.
\end{proof}

\section{Cohomology mod $2$ of the symmetric group}
\label{Sq-coh-sec}

There are several approaches to the cohomology of the symmetric group, see the books~\cite{admi2004,vas1994}. Here we apply the results of the previous section to describe the cohomology $H^*(\Sy_q; \mathbb F_2)$. This description was obtained by the same method in~\cite{hung1990} but we reproduce it here for completeness.

Consider the groups $\Sy_q$, where $q$ is a power of two. They have an inductive definition as 
$$
\Sy_{2q} = (\Sy_q\times\Sy_q)\rtimes \mathbb Z_2,
$$ 
where the last factor $\mathbb Z_2$ acts by permuting the first two factors $\Sy_q$. This construction is also known as the wreath product 
$$
\Sy_{2q} = \Sy_q\wr \mathbb Z_2.
$$
Hence, the mod $2$ cohomology of $\Sy_{2q}$ can be approximated by the Cartan--Leray spectral sequence (see~\cite{admi2004}) with initial term 
$$
E_2^{p,*} = H^p(\mathbb Z_2; H^*(\Sy_q)\otimes H^*(\Sy_q)),
$$
where $\mathbb Z_2$ acts on $H^*(\Sy_q)\otimes H^*(\Sy_q)$ by permuting the factors. It can be easily seen that this spectral sequence corresponds to the fiber bundle
$$
\begin{CD}
B\Sy_q\times B\Sy_q @>>> B\Sy_{2q}\\
@. @VVV\\
@. B\mathbb Z_2,
\end{CD}
$$
which is the limit case $n\to\infty$ of the external Steenrod square fiber bundle of Section~\ref{ext-sq-sec}. Hence the cohomology $H^*(\Sy_{2q})$ is generated by $H^*(\Sy_q)\odot H^*(\Sy_q)$ and $H^*(\mathbb Z_2)$ with the relations of the form $x\odot y\otimes c = 0$, where $c$ is the generator of $H^1(\mathbb Z_2)$.

We obtain a way to describe the cohomology of $\Sy_q$ by applying repeatedly the external Steenrod square construction. Denote the cohomology algebras of the respective $\mathbb Z_2$ groups in the wreath product $\Sy_q=\mathbb Z_2\wr\dots\wr \mathbb Z_2$ by $\mathbb F_2[c_1],\ldots, \mathbb F_2[c_l]$ (for $q=2^l$). Then we have the inductive formula by Lemma~\ref{ext-sq}:
$$
H^*(\Sy_{2^l}) = \left(H^*(\Sy_{2^{l-1}})\odot H^*(\Sy_{2^{l-1}})\right)\otimes \mathbb F_2[c_l] / (x\odot y\otimes c_l).
$$

The following statement follows from Lemma~\ref{ext-sq} and gives an explicit description of certain quotient algebra of $H^*(\Sy_q)$ (compare with the definition of $\mathcal M^\perp(\ldots)$ in~\cite{hung1990} and~\cite[Proposition~2.8]{hung1990}):

\begin{defn}
Define inductively the ideal $I_q\subset H^*(\Sy_q)$ as generated by the sets
$$
\{x\odot y : x,y\in H^*(\Sy_{q/2})\}\ \text{and}\ \Sqe I_{q/2}.
$$
\end{defn} 

\begin{lem}
\label{reduced-coh}
The algebra $H^*(\Sy_q)/I_q$ (for $q=2^l$) is the polynomial ring 
$$
H^*(\Sy_q)/I_q = \mathbb F_2[\Sqe^{l-1}c_1, \Sqe^{l-2}c_2, \ldots, c_l],
$$
the subalgebra $\mathbb F_2[\Sqe^{l-1}c_1, \Sqe^{l-2}c_2, \ldots, c_l]\subset H^*(\Sy_q)$ is projected onto $H^*(\Sy_q)/I_q$ isomorphically.
\end{lem}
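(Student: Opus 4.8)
The plan is to induct on $l$, where $q=2^l$, using the inductive description of $H^*(\Sy_{2q})$ supplied by Lemma~\ref{ext-sq} (and the remark following it) in the stable range $n\to\infty$. The base case $l=1$ is immediate: $\Sy_2=\mathbb Z_2$, $H^*(\Sy_2)=\mathbb F_2[c_1]$, and $I_2=0$ since $H^*(\Sy_1)$ lives in degree $0$, so that every class $x\odot y$ vanishes; the asserted equality then reads $\mathbb F_2[c_1]=\mathbb F_2[c_1]$.

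For the inductive step, write $K=B\Sy_{q/2}$, so that $B\Sy_q=(K\times K\times S^\infty)/\mathbb Z_2$ and, by Lemma~\ref{ext-sq} with $n=\infty$, $H^*(\Sy_q)$ is the quotient of $\bigl(H^*(\Sy_{q/2})\odot H^*(\Sy_{q/2})\bigr)\otimes\mathbb F_2[c_l]$ by the relations $(x\odot y)\otimes c_l=0$. Let $J\subset H^*(\Sy_q)$ be the linear span of all classes $x\odot y$ with $x,y\in H^*(\Sy_{q/2})$. Using the multiplication rules for $\odot$ and $\Sqe$ recorded in Section~\ref{ext-sq-sec} together with the relation $(x\odot y)c_l=0$, one checks directly that $J$ is an ideal, and since it is spanned by the generators $x\odot y$ it is exactly the ideal those generators generate. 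By the remark following Lemma~\ref{ext-sq}, $J$ is the kernel of a surjection onto $\Sqe(H^*(\Sy_{q/2}))\otimes\mathbb F_2[c_l]$, where $\Sqe(H^*(\Sy_{q/2}))$ is $H^*(\Sy_{q/2})$ with all degrees doubled; moreover, because $\Sqe(x+y)=\Sqe x+\Sqe y+x\odot y$ and $x\odot y\in J$, the composite of $\Sqe$ with the projection to $H^*(\Sy_q)/J$ is additive as well as multiplicative, hence a ring isomorphism $H^*(\Sy_{q/2})\to\Sqe(H^*(\Sy_{q/2}))$ doubling degrees.

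Now further divide by $\Sqe I_{q/2}$. By construction $I_q=J+\langle\Sqe I_{q/2}\rangle$, so $H^*(\Sy_q)/I_q$ is the quotient of $\Sqe(H^*(\Sy_{q/2}))\otimes\mathbb F_2[c_l]$ by the ideal generated by the image of $\Sqe I_{q/2}$; under the ring isomorphism of the preceding paragraph this image corresponds to $I_{q/2}$, hence the ideal it generates corresponds to $I_{q/2}\otimes\mathbb F_2[c_l]$ and
$$
H^*(\Sy_q)/I_q\;\cong\;\bigl(H^*(\Sy_{q/2})/I_{q/2}\bigr)\otimes\mathbb F_2[c_l],
$$
with the first factor's degrees doubled by $\Sqe$. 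By the inductive hypothesis the first factor is $\mathbb F_2[\Sqe^{l-2}c_1,\ldots,c_{l-1}]$; applying $\Sqe$ replaces each generator $\Sqe^{l-1-i}c_i$ by $\Sqe^{l-i}c_i$, and tensoring with $\mathbb F_2[c_l]$ yields $\mathbb F_2[\Sqe^{l-1}c_1,\Sqe^{l-2}c_2,\ldots,c_l]$, as claimed. The relabelling records the fact that the $i$-th copy of $\mathbb Z_2$, whose generator $c_i$ is born at the $i$-th stage of the wreath product, is carried into $H^*(\Sy_q)$ by $l-i$ applications of the external square.

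It remains to see that the polynomial subalgebra of $H^*(\Sy_q)$ generated by $\Sqe^{l-1}c_1,\ldots,c_l$ projects isomorphically onto $H^*(\Sy_q)/I_q$. Let $P=\mathbb F_2[x_1,\ldots,x_l]$ be an abstract polynomial ring and $\phi:P\to H^*(\Sy_q)$ the substitution $x_i\mapsto\Sqe^{l-i}c_i$. The computation above shows that composing $\phi$ with the quotient map sends $x_i$ to the $i$-th free generator of $H^*(\Sy_q)/I_q$, so this composite is an isomorphism of polynomial rings; hence $\phi$ is injective, $\phi(P)$ is a polynomial subalgebra, and the quotient map restricts to an isomorphism $\phi(P)\to H^*(\Sy_q)/I_q$. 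The only delicate points are the verification that $J$ is precisely the ideal generated by the $\odot$-classes and that the reduced $\Sqe$ is a genuine degree-doubling ring isomorphism, together with the bookkeeping of the generators $c_i$ through the iterated external square; the rest is a routine induction on top of Lemma~\ref{ext-sq}.
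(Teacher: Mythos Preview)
Your argument is correct and is precisely the inductive unwinding that the paper has in mind: the paper does not give a separate proof of this lemma but states that it ``follows from Lemma~\ref{ext-sq}'', and your proof is exactly the step-by-step verification of that claim via the wreath-product description $H^*(\Sy_{2q})=(H^*(\Sy_q)\odot H^*(\Sy_q))\otimes\mathbb F_2[c_l]/((x\odot y)\otimes c_l)$. One cosmetic remark: in the base case the vanishing of $I_2$ is cleanest to justify by noting that $\odot$ is bilinear and alternating (from its mock-bundle definition), so on the one-dimensional algebra $H^*(\Sy_1)=\mathbb F_2$ it is identically zero; ``lives in degree~$0$'' is not by itself the reason.
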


If we consider some $\Sy_q$-space $X$ then the natural equivariant map $\pi_X :X\to \pt$ induces the natural map 
$$
\pi_X^* : H^*(B\Sy_q)=H^*(\Sy_q)\to H_{\Sy_q}^*(X),
$$
thus we speak informally that $\pi_X^*(H^*(\Sy_q))$ is the image of $H^*(\Sy_q)$ in $H_{\Sy_q}^*(X)$. In the sequel we usually consider the subquotient of the cohomology $H_{\Sy_q}^*(X)$, defined as follows:

\begin{defn}
$$
\rH_{\Sy_q}(X) = \pi_X^*(H^*(\Sy_q))/\pi_X^*(I_q).
$$
\end{defn}

Actually, the above reasoning also allows to describe the cohomology of $\Sg_k$ with coefficients $\mathbb F_2$ for $k$ not a power of two. If we consider the binary decomposition $k=q_1+\dots+q_m$, then the Sylow subgroup $\Sy_k = \Sy_{q_1}\times\dots\times\Sy_{q_m}$, and the cohomology algebra is the tensor product of the respective algebras $H^*(\Sy_{q_i})$, described above.

This approach can be applied similarly to the case of cohomology modulo $p$ for odd prime $p$ (compare~\cite[IV.1, Theorem~1.7]{admi2004}). Instead of $\odot$-product we have to use the cyclic product, defined on mock bundles over $K$ as (indexes are modulo $p$)
$$
c(\xi_1, \ldots, \xi_p) = \left(\sum_{i=1}^p \xi_i\times\xi_{i+1}\times\dots\times \xi_{i-1}\right)/\mathbb Z_p.
$$
These cyclic products along with the ordinary external Steenrod $p$-th powers generate the cohomology of $K^{\times p}\times_{\mathbb Z_p} B\mathbb Z_p$. This is obvious at the level of spectral sequences; and it is true on the level of cohomology, since the leftmost column of the spectral sequence survives and multiplicatively generates (along with $H^*(\mathbb Z_p;\mathbb F_p)$) the entire spectral sequence. Then we note that for the $p$-adic decomposition $n=\sum_i p^{k_i}$ we have
$$
\Sigma^{(p)}_n = \prod_i \Sigma^{(p)}_{p^{k_i}}
$$
and
$$
\Sigma^{(p)}_{p^k} = \underbrace{\mathbb Z_p\wr\dots\wr \mathbb Z_p}_k.
$$

\section{Equivariant cohomology of spaces $\Qo_q(\mathbb R^n)$}
\label{Qq-coh-sec}

The results of this section describe the cohomology $H^*(\Qo_q(\mathbb R^n); \mathbb F_2)$ in terms of external Steenrod squares, following mostly~\cite{hung1990}.

The space $\Qo_q(\mathbb R^n)$ is a product of $(n-1)$-dimensional spheres and when $n\to \infty$ we obtain a homotopy trivial space with free $\Sy_q$-action, i.e. a realization of $E\Sy_q$. Denote $\Qo_q(\mathbb R^n)/\Sy_q = \Po_q(\mathbb R^n)$ for brevity, for $q=2$ it is the $(n-1)$-dimensional projective space. It can be easily seen that the inclusion $\Qo_q(\mathbb R^n)\to E\Sy_q$ along with the Steenrod square fibration of the classifying spaces gives a fiber bundle
\begin{equation}
\label{Qq-constr}
\begin{CD}
\Po_q(\mathbb R^n)\times \Po_q(\mathbb R^n) @>>> \Po_{2q}(\mathbb R^n)\\
@. @VVV\\
@. \mathbb RP^{n-1},
\end{CD}
\end{equation}
which is also a particular case of the external Steenrod square fiber bundle. Note that we have the natural cohomology map 
$$
\pi_{\Qo_q(\mathbb R^n)} : H^*(\Sy_q)\to H^*(\Po_q(\mathbb R^n)),
$$
whose image spans a ``large part'' of $H^*(\Po_q(\mathbb R^n))$, but there are also some cohomology classes generated by $\iota(\ldots)$ operation that are not in this image. Note that if we replace $\Qo_q(\mathbb R^n)$ by $F_q(\mathbb R^n)$, then we have the surjectivity for the map $H^*(\Sg_q)\to H^*(F_q(\mathbb R^n)/\Sg_q)$ using the certain cellular structure on $F_q(\mathbb R^n)$, see~\cite{fuks1970,vas1994}, this fact was used in~\cite{kar2010}, but we do not use this fact in this paper. Another interesting fact (not used here) is that the natural restriction $H^*(F_q(\mathbb R^n)/\Sg_q)\to H^*(\Po_q(\mathbb R^n))$ is injective, see~\cite[Theorem~D]{hung1990}.

Still we can describe the subquotient of the cohomology algebra.

\begin{lem}
\label{Qq-reduced-coh}
Let $q=2^l$. The subquotient 
$$
\rH_{\Sy_q} (\Qo_q(\mathbb R^n)) = \rH (\Po_q(\mathbb R^n))
$$
is the polynomial ring $\mathbb F_2[\Sqe^{l-1}c_1, \Sqe^{l-2}c_2, \ldots, c_l]$ with relations 
$$
\forall i = 1,\ldots, l,\ (\Sqe^{l-i}c_i)^n = 0,
$$
where $c_1,\ldots,c_l$ are the generators of the respective $H^1(\mathbb Z_2)$ in the representation 
$$
\Sy_q = \underbrace{\mathbb Z_2\wr \dots \wr \mathbb Z_2}_l.
$$
\end{lem}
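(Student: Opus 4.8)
The plan is to induct on $l$ (equivalently on $q=2^l$), running the computation of Section~\ref{Sq-coh-sec} through the external Steenrod square fiber bundle~(\ref{Qq-constr})
$$
\Po_{q/2}(\mathbb R^n)\times\Po_{q/2}(\mathbb R^n)\ \to\ \Po_q(\mathbb R^n)\ \to\ \mathbb{RP}^{n-1},
$$
which is the special case of $\sigma_{K,n}$ with $K=\Po_{q/2}(\mathbb R^n)$ and $S^{n-1}$ in place of $S^n$. For $l=1$ one has $\Qo_2(\mathbb R^n)=S^{n-1}$, $\Po_2(\mathbb R^n)=\mathbb{RP}^{n-1}$, the map $\pi^*_2:H^*(\Sy_2)=\mathbb F_2[c_1]\to H^*(\mathbb{RP}^{n-1})=\mathbb F_2[c_1]/(c_1^n)$ is the evident surjection, and $I_2=0$, so the claim holds. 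Since~(\ref{Qq-constr}) is pulled back from the Steenrod square bundle of classifying spaces along $\Po_q(\mathbb R^n)\to B\Sy_q$, the operations $\Sqe$, $\odot$ and the class $c_l$ are all compatible with $\pi^*$; hence, writing $R:=\pi^*_{q/2}(H^*(\Sy_{q/2}))$, the image $\pi^*_q(H^*(\Sy_q))$ is generated by $\Sqe(R)$, the $\odot$-products of elements of $R$, and $c_l$, while $\pi^*_q(I_q)$ is the ideal of $\pi^*_q(H^*(\Sy_q))$ generated by these $\odot$-products together with $\Sqe(\pi^*_{q/2}(I_{q/2}))$.

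Passing to $\rH(\Po_q(\mathbb R^n))=\pi^*_q(H^*(\Sy_q))/\pi^*_q(I_q)$ kills all $\odot$-products, so the identity $\Sqe(\xi+\eta)=\Sqe\xi+\Sqe\eta+\xi\odot\eta$ together with the multiplicativity of $\Sqe$ shows that $\Sqe$ descends to a degree-doubling ring homomorphism $\rH(\Po_{q/2}(\mathbb R^n))\to\rH(\Po_q(\mathbb R^n))$ carrying $\Sqe^{l-1-i}c_i$ to $\Sqe^{l-i}c_i$. Thus $\rH(\Po_q(\mathbb R^n))$ is generated by $\Sqe^{l-1}c_1,\dots,\Sqe\,c_{l-1},c_l$; the relations $(\Sqe^{l-i}c_i)^n=0$ for $i<l$ are the $\Sqe$-images of the relations for $\Po_{q/2}(\mathbb R^n)$ supplied by induction, while $c_l^n=0$ because $c_l$ is pulled back from $H^1(\mathbb{RP}^{n-1})$. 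Hence there is a surjection of graded algebras
$$
\mathbb F_2[\Sqe^{l-1}c_1,\dots,\Sqe\,c_{l-1},c_l]\big/\big((\Sqe^{l-i}c_i)^n:\ i=1,\dots,l\big)\ \longrightarrow\ \rH(\Po_q(\mathbb R^n)),
$$
and it remains to prove that this map is an isomorphism, i.e. that there are no further relations.

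For this I would make the basis bookkeeping of Lemma~\ref{ext-sq} explicit. Fix a homogeneous basis $\{v_s\}$ of $H^*(\Po_{q/2}(\mathbb R^n))$ adapted to the flag $\pi^*_{q/2}(I_{q/2})\subset R\subset H^*(\Po_{q/2}(\mathbb R^n))$; Lemma~\ref{ext-sq} then gives $H^*(\Po_q(\mathbb R^n))$ the basis formed by the classes $\Sqe(v_s)\,c_l^{\,a}$ with $0\le a\le n-1$ (where $\Sqe(v_s)c_l^{\,n-1}=\iota(v_s\times v_s)$), the $\odot$-products $v_s\odot v_t$ with $s<t$, and the remaining $\iota$-classes $\iota(v_s\times v_t)$ with $s<t$ — and this last kind never lies in $\pi^*_q(H^*(\Sy_q))$. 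Expanding products of $\Sqe(R)$, $\odot$-products and $c_l$ (using $(\xi\odot\eta)\smile\sigma_{K,n}^*(c)=0$), one gets that $\pi^*_q(H^*(\Sy_q))$ is spanned by the $v_s\odot v_t$ with $v_s,v_t\in R$ and the $\Sqe(v_s)c_l^{\,a}$ with $v_s\in R$, $0\le a<n$; that $\pi^*_q(I_q)$ is spanned by the $\odot$-products and those $\Sqe(v_s)c_l^{\,a}$ with $v_s\in\pi^*_{q/2}(I_{q/2})$; and hence that $\rH(\Po_q(\mathbb R^n))$ has a basis indexed by pairs $(v_s,a)$ with $v_s$ in a basis of $\rH(\Po_{q/2}(\mathbb R^n))$ and $0\le a<n$. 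Therefore $P_{\rH(\Po_q(\mathbb R^n))}(t)=P_{\rH(\Po_{q/2}(\mathbb R^n))}(t^2)\,(1+t+\dots+t^{n-1})$, which by the inductive formula equals $\prod_{j=0}^{l-1}(1-t^{n2^j})/(1-t^{2^j})$, the Poincaré series of the complete intersection above; comparison with the surjection completes the induction.

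The delicate point is precisely this last step: one must verify that passing from $H^*(\Po_q(\mathbb R^n))$ to the subquotient $\rH$ creates no hidden linear dependences, i.e. that $\pi^*_q(I_q)$ is neither larger nor smaller than the span of the $\odot$-products and the $c_l$-multiples of $\Sqe\,\pi^*_{q/2}(I_{q/2})$. This is where one genuinely uses that $\Sqe$ carries a basis of $R$ to a set that is linearly independent modulo the $\odot$-span, that the relations $(\xi\odot\eta)\smile\sigma_{K,n}^*(c)=0$ and $\Sqe\xi\smile\sigma_{K,n}^*(c)^{\,n-1}=\iota(\xi\times\xi)$ of Section~\ref{ext-sq-sec} are the only identities in play, and that the genuinely new $\iota$-classes are invisible to $\pi^*_q$. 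Everything else is formal manipulation with $\Sqe$, $\odot$, $\iota$ and the inductive definitions of $I_q$ and $\Sy_q$.
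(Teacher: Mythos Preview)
Your argument is correct, but it proves injectivity of the surjection by a different mechanism than the paper does. The paper's proof of ``no further relations'' is a top-class argument: assuming a relation $\sum c(k_1,\dots,k_l)h_{k_1\dots k_l}\in\pi^*(I_q)$, it multiplies by the complementary monomial $h_{n-1-k_1\,\dots\,n-1-k_l}$ to force $h_{n-1\,\dots\,n-1}\in\pi^*(I_q)$, and then observes (by an easy induction) that the image of $I_q$ vanishes in the top degree $(q-1)(n-1)$, since $\odot$-products of classes on a closed manifold $K$ never reach the top of $H^*((K\times K\times S^{n-1})/\mathbb Z_2)$. Your route instead does the full basis bookkeeping from Lemma~\ref{ext-sq} to compute the Poincar\'e series of $\rH(\Po_q(\mathbb R^n))$ recursively and match it against that of the truncated polynomial ring. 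Both are valid; the paper's argument is shorter and exploits that $\Po_q(\mathbb R^n)$ is a closed manifold (and incidentally yields Lemma~\ref{Qq-euler}), while your dimension count is purely algebraic and would survive in settings where there is no fundamental class to aim at. The ``delicate point'' you flag---that $\pi_q^*(I_q)$ is exactly the span of the $\odot$-products together with $\Sqe(\pi_{q/2}^*(I_{q/2}))\cdot c_l^{\,a}$, and that the off-diagonal $\iota$-classes stay outside $\pi_q^*(H^*(\Sy_q))$---is indeed the place where the work happens, and your justification (via the multiplication rules of Section~\ref{ext-sq-sec} and the identity $\iota(r\times r)=\sum\mu_s\,\iota(v_s\times v_s)$ over $\mathbb F_2$) is sound.
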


\begin{proof}
The cohomology $H^*(\Po_q(\mathbb R^n))$ is obtained from $l$ copies of $H^*(\mathbb RP^{n-1})$ by successive external Steenrod square construction. 

Let us use induction and Lemma~\ref{ext-sq}. From the description of the cohomology of the group $\Sy_q$, the cohomology $\rH^*(\Po_q(\mathbb R^n))$ is generated by $\rH^*(\Po_{q/2}(\mathbb R^n))$ with $\Sqe$ and $\odot$ operations, $\iota(\ldots)$ operation is not used.

The relation on $n$-th powers is obvious, since in every $H^*(\mathbb RP^{n-1})$ we have $c_i^n=0$. Let us prove that there are no other relations in $\rH^*(\Po_q(\mathbb R^n))$. Denote 
$$
h_{k_1 k_2\dots k_l} = (\Sqe^{l-1}c_1)^{k_1}(\Sqe^{l-2}c_2)^{k_2}, \ldots, c_l^{k_l}
$$
and assume the contrary
$$
\sum_{0\le k_1,\ldots, k_l\le n-1} c(k_1,\ldots, k_l) h_{k_1 \dots k_l} = x,
$$
where $x$ is an element from the ideal $I_q$. Choose the lexicographically smallest index $(k_1,\ldots, k_l)$ with nonzero $c(k_1, \ldots, k_l)$ and multiply by $h_{n-1-k_1 \dots n-1-k_l}$, from the $n$-th power relations we have 
$$
h_{n-1\dots n-1} = y
$$
for some $y\in I_q$. It may be proved by induction that all elements $y\in I_q$ of dimension $(q-1)(n-1)$ are mapped to zero under the natural map $\pi_{\Qo_q(\mathbb R^n)} : H^*(\Sy_q) \to H^*(\Po_q(\mathbb R^n))$, informally it follows from the fact that the elements $x\odot y$ do not have the largest possible dimension in the cohomology $H^*((K\times K\times S^{n-1})/\mathbb Z_2)$, if $K$ is a manifold. Thus we have obtained a contradiction.
\end{proof}

Now consider the equivariant bundles over $\Qo_q(\mathbb R^n)$. Denote the bundle 
$$
\alpha_q(\Qo_q(\mathbb R^n)) = \Qo_q(\mathbb R^n)\times A_q
$$ 
simply by $\alpha_q$, it is $\Sy_q$-equivariant and can be also considered as a vector bundle over $\Po_q(\mathbb R^n)$, after going to the quotient by $\Sy_q$ action. 

\begin{lem}
\label{aq-split}
Let $q=2^l$. We have the inductive formula $\alpha_{2^l} = \Sqe(\alpha_{2^{l-1}})\oplus \gamma_l$, where $\gamma_l$ is the pullback of the canonical bundle over $\mathbb RP^{n-1}$ under the natural projection $\Po_{2^l}(\mathbb R^n)\to \mathbb RP^{n-1}$. Applying it repeatedly we obtain
$$
\alpha_{2^l} = \bigoplus_{i=1}^l \Sqe^{l-i}\gamma_i,
$$ 
with $\gamma_i$ being the appropriate pullback on the $i$-th stage of squaring.
\end{lem}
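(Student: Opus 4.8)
The plan is to establish the one-step identity $\alpha_{2^l}=\Sqe(\alpha_{2^{l-1}})\oplus\gamma_l$ by unwinding the inductive description of $\Qo_{2^l}(\mathbb R^n)$ together with the block decomposition of the permutation representation $V_{2^l}$, and then to iterate it; everything else is additivity and an induction on $l$.

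First I would record the geometry of the base together with the representation-theoretic splitting. By the definition of $\Qo_q(\mathbb R^n)$, the space $\Qo_{2^l}(\mathbb R^n)$ is canonically $\Qo_{2^{l-1}}(\mathbb R^n)\times\Qo_{2^{l-1}}(\mathbb R^n)\times S^{n-1}$, the last factor being the sphere of shift vectors $u$ of length $\delta$; the group $\Sy_{2^l}=\Sy_{2^{l-1}}\wr\mathbb Z_2$ acts via $\Sy_{2^{l-1}}\times\Sy_{2^{l-1}}$ on the two configuration factors and via the wreath $\mathbb Z_2$ by swapping them while acting antipodally on $S^{n-1}$. Passing to the $\Sy_{2^l}$-quotient, this is precisely the external Steenrod square fibration~(\ref{Qq-constr}) over $\mathbb RP^{n-1}=S^{n-1}/\mathbb Z_2$. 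Now decompose the permutation representation as $V_{2^l}=V_{2^{l-1}}^{(1)}\oplus V_{2^{l-1}}^{(2)}$ according to the two blocks of $2^{l-1}$ coordinates, and further $V_{2^{l-1}}^{(j)}=A_{2^{l-1}}^{(j)}\oplus\mathbb R\mathbf 1^{(j)}$ with $\mathbf 1^{(j)}$ the characteristic vector of the $j$-th block; the wreath $\mathbb Z_2$ interchanges the superscripts $(1)\leftrightarrow(2)$. Since $\mathbf 1^{(j)}$ has coordinate sum $2^{l-1}\neq0$, the zero-coordinate-sum subspace is
$$
A_{2^l}=A_{2^{l-1}}^{(1)}\oplus A_{2^{l-1}}^{(2)}\oplus\mathbb R(\mathbf 1^{(1)}-\mathbf 1^{(2)}),
$$
the last summand being the sign representation of the wreath $\mathbb Z_2$, with $\Sy_{2^{l-1}}\times\Sy_{2^{l-1}}$ acting trivially on it.

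Then I would translate this summand by summand into a decomposition of the associated bundle $\alpha_{2^l}$ over $\Po_{2^l}(\mathbb R^n)$, using the identification above. The piece $A_{2^{l-1}}^{(1)}\oplus A_{2^{l-1}}^{(2)}$ is, before taking the $\mathbb Z_2$-quotient, the pullback to $\Po_{2^{l-1}}^{\times 2}\times S^{n-1}$ of $\mathrm{pr}_1^*\alpha_{2^{l-1}}\oplus\mathrm{pr}_2^*\alpha_{2^{l-1}}$; comparing with the construction of $\Sqe$ on a vector bundle $\nu$ — whose total space is $(E(\nu)\times E(\nu)\times S^{n-1})/\mathbb Z_2$ with fiber $\nu_{p_1}\oplus\nu_{p_2}$ over $[(p_1,p_2,u)]$ — this descends exactly to $\Sqe(\alpha_{2^{l-1}})$. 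The remaining line $\mathbb R(\mathbf 1^{(1)}-\mathbf 1^{(2)})$ gives the line bundle attached to the free $\mathbb Z_2$-action on $\Po_{2^{l-1}}^{\times 2}\times S^{n-1}$ through the sign character; because this $\mathbb Z_2$ already acts freely on the $S^{n-1}$ factor alone, that line bundle is the pullback of $S^{n-1}\times_{\mathbb Z_2}\mathbb R_{\mathrm{sgn}}=\gamma$ over $\mathbb RP^{n-1}$ along the projection $\Po_{2^l}(\mathbb R^n)\to\mathbb RP^{n-1}$ of~(\ref{Qq-constr}), i.e.\ it is $\gamma_l$. Hence $\alpha_{2^l}=\Sqe(\alpha_{2^{l-1}})\oplus\gamma_l$; as a check the dimensions add up, $2(2^{l-1}-1)+1=2^l-1=\dim A_{2^l}$.

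Finally I would iterate. The square construction is additive on honest vector bundles, $\Sqe(\xi\oplus\zeta)=\Sqe\xi\oplus\Sqe\zeta$, since a Whitney sum corresponds to the fiber product of total spaces and $\Sqe$ merely forms the $\mathbb Z_2$-balanced square and crosses with $S^{n-1}$; the $\odot$-correction of Section~\ref{ext-sq-sec} is invisible at the level of bundles and appears only in $w(\Sqe\nu)$ (Lemma~\ref{sw-sq}). Starting from the base case $\alpha_2=\gamma_1$ — here $\Po_2(\mathbb R^n)=\mathbb RP^{n-1}$ and $A_2$ is the sign line — repeated application of the one-step formula together with additivity of $\Sqe$ yields $\alpha_{2^l}=\Sqe^{l-1}\gamma_1\oplus\Sqe^{l-2}\gamma_2\oplus\dots\oplus\gamma_l=\bigoplus_{i=1}^l\Sqe^{l-i}\gamma_i$, with $\gamma_i$ the canonical-bundle pullback introduced at the $i$-th squaring stage. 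The one place demanding care is the identification in the third paragraph: one must verify that $\Sqe$ applied to the vector bundle $\alpha_{2^{l-1}}$ over $\Po_{2^{l-1}}(\mathbb R^n)$, in the total-space sense of Section~\ref{ext-sq-sec}, literally coincides with the $\Sy_{2^l}$-bundle over $\Po_{2^l}(\mathbb R^n)$ coming from the submodule $A_{2^{l-1}}^{(1)}\oplus A_{2^{l-1}}^{(2)}\subset V_{2^l}$. This is exactly what the already-established identification of $\Po_{2^l}(\mathbb R^n)$ with the Steenrod square fibration~(\ref{Qq-constr}) provides, so the remainder is routine bookkeeping with associated bundles.
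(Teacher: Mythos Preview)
Your argument is correct and follows essentially the same route as the paper: both proofs decompose $A_{2q}$ as the sign line $\mathbb R(\mathbf 1^{(1)}-\mathbf 1^{(2)})$ (giving $\gamma_l$) plus the swapped pair $A_q^{(1)}\oplus A_q^{(2)}$ (giving $\Sqe\alpha_{q}$), and then pass to the associated bundles over $\Po_{2q}(\mathbb R^n)$. Your write-up is in fact more complete, since you spell out the base case $\alpha_2=\gamma_1$, the additivity $\Sqe(\xi\oplus\zeta)=\Sqe\xi\oplus\Sqe\zeta$ needed for the iteration, and the dimension check, all of which the paper leaves implicit.
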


\begin{proof}
The representation $A_{2q}$ has a linear summand, consisting of vectors with the first $q$ coordinates equal, and the last $q$ coordinates equal. This summand is induced from the antipodal action of the quotient $\mathbb Z_2 = \Sy_{2q}/(\Sy_q\times\Sy_q)$ on $\mathbb R$.

The rest of $A_{2q}$ is the direct sum of $A_q$ for the first factor $\Sy_q$ and $A_q$ for the second factor $\Sy_q$, the quotient $\mathbb Z_2 = \Sy_{2q}/(\Sy_q\times\Sy_q)$ acting on it by permuting the summands. This construction corresponds to the $\Sqe$ operation for vector bundles of the form $(X\times A_q)/\Sy_q\to X/\Sy_q$.
\end{proof}

It follows from Lemmas~\ref{aq-split} and \ref{sw-sq} that in the above terms (at $i$-th stage $w(\gamma) = 1 + c_i$)
$$
e(\alpha_q) = \Sqe^{l-1}c_1\Sqe^{l-2}c_2\dots c_l
$$
and
$$
e(\alpha_q)^{n-1} = \Sqe^{l-1}c_1^{n-1}\Sqe^{l-2}c_2^{n-1}\dots c_l^{n-1}.
$$
Note that now Lemma~\ref{Qq-euler} follows from these formulas and Lemma~\ref{ext-sq}. We can also describe the full Stiefel--Whitney class of $\alpha_q$, at least modulo the ideal $I_q$. But according to Lemma~\ref{k-partition-Rn}, we have to describe the bundle $\alpha_q^\perp$ and give a formula for its Stiefel--Whitney class. We need a lemma first.

\begin{lem}
\label{Qq-fro}
Let $n$ be a positive integer, and let $N$ be the least power of two such that $N\ge n$. Then the operator $F_N : x\mapsto x^N$ is zero on the reduced cohomology $\tilde H^*(\Po_q(\mathbb R^n))$.
\end{lem}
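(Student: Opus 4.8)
The plan is to induct on $l$ (where $q=2^l$), using the Steenrod-square description of $H^*(\Po_q(\mathbb R^n))$ coming from Lemma~\ref{ext-sq} and the fibration~(\ref{Qq-constr}). The base case $l=1$ is the classical fact that $x^N=0$ on $\tilde H^*(\mathbb{RP}^{n-1})$: indeed $\tilde H^*(\mathbb{RP}^{n-1})$ is spanned by $c,c^2,\dots,c^{n-1}$, and $(c^j)^N=c^{jN}=0$ since $jN\ge N\ge n$. So $F_N$ annihilates the reduced cohomology of $\Po_2(\mathbb R^n)=\mathbb{RP}^{n-1}$.

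For the inductive step, suppose $F_N$ kills $\tilde H^*(\Po_{q/2}(\mathbb R^n))$, where by the choice of $N$ the same $N$ works (it depends only on $n$). By Lemma~\ref{ext-sq}, the cohomology $H^*(\Po_q(\mathbb R^n)) = H^*((K\times K\times S^{n-1})/\mathbb Z_2)$ with $K=\Po_{q/2}(\mathbb R^n)$ is multiplicatively generated by the classes $\Sqe\xi$, $\xi\odot\eta$ (for $\xi,\eta\in H^*(K)$), the classes $\iota(\xi\times\eta)$, and $c\in H^1(\mathbb{RP}^{n-1})$. Since $F_N$ is the Frobenius $x\mapsto x^N$ on an $\mathbb F_2$-algebra and $N$ is a power of two, $F_N$ is a ring endomorphism; it therefore suffices to check that $F_N$ sends each generator to something in the appropriate annihilated piece, and that it is additive on the relevant sums — here one must be slightly careful, since $F_N$ is multiplicative but only additive after raising to a $2$-power, which is exactly what $F_N$ does, so $F_N(a+b)=F_N(a)+F_N(b)$ holds. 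Concretely: $F_N(c)=c^N=0$ because $N\ge n$ and $c\in H^1(\mathbb{RP}^{n-1})$ with $c^n=0$; $F_N(\iota(\xi\times\eta)) = \iota(\xi\times\eta)^N$, which vanishes since $\iota(\ldots)\smile\iota(\ldots)=0$ already (so any square of such a class is $0$, and $N\ge2$); $F_N(\xi\odot\eta) = (\xi\odot\eta)^N$, and using $(\xi\odot\eta)\smile(\zeta\odot\chi) = (\xi\smile\zeta)\odot(\eta\smile\chi) + (\xi\smile\chi)\odot(\eta\smile\zeta)$ one gets $(\xi\odot\eta)^2 = \xi^2\odot\eta^2$ (the cross terms coincide mod $2$), hence $(\xi\odot\eta)^N = \xi^N\odot\eta^N = F_N(\xi)\odot F_N(\eta) = 0$ by the inductive hypothesis; and $F_N(\Sqe\xi) = (\Sqe\xi)^N = \Sqe(\xi^N) = \Sqe(F_N\xi) = \Sqe(0)=0$, again by induction, using $\Sqe\xi\smile\Sqe\eta=\Sqe(\xi\smile\eta)$. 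A general reduced class is a sum of products of these generators with at least one reduced factor; multiplicativity of $F_N$ then pushes an $N$-th power onto some generator, where it dies. This proves $F_N$ is zero on $\tilde H^*(\Po_q(\mathbb R^n))$.

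The main obstacle I anticipate is the bookkeeping around additivity and the role of the unit: $F_N$ applied to a polynomial expression in the generators must be expanded carefully, since $F_N(1)=1\ne0$, so one genuinely needs that every element of the \emph{reduced} cohomology lies in the ideal generated by $\{\Sqe\xi,\ \xi\odot\eta,\ \iota(\xi\times\eta)\ :\ \xi,\eta\in\tilde H^*(K)\}$ together with $c$ — i.e.\ that the only ``constant'' is $1$ — which follows from Lemma~\ref{ext-sq} once one notes that $\Sqe$ of a positive-degree class has positive degree, $\xi\odot\eta$ likewise, and $c$ is positive-degree. Given that, multiplicativity of the Frobenius finishes the argument cleanly; no delicate spectral-sequence work beyond what Lemma~\ref{ext-sq} already provides is needed.
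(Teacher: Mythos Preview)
Your proof is correct and follows essentially the same route as the paper's: induct on $l$, use the base case $\tilde H^*(\mathbb{RP}^{n-1})$, and observe that the Frobenius $F_N$ is a ring endomorphism so it suffices to kill the multiplicative generators of $\tilde H^*(\Po_q(\mathbb R^n))$ supplied by Lemma~\ref{ext-sq}. You are in fact slightly more careful than the paper, which does not explicitly mention the $\iota(\ldots)$ generators; your observation that $\iota(\xi\times\eta)\smile\iota(\zeta\times\chi)=0$ already forces $F_N(\iota(\ldots))=0$ fills that small gap, and your verification that $(\xi\odot\eta)^2=\xi^2\odot\eta^2$ (since $x\odot x=0$) makes the $\odot$ case explicit where the paper just asserts it.
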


\begin{proof}
For $q=2$ it is clear that the $N$-th power operator is zero on $\tilde H^*(\mathbb RP^{n-1})$. Then we proceed by induction. Using the fibre bundle~(\ref{Qq-constr}) we see that all the generators of $\tilde H^*(\Po_q(\mathbb R^n))$ (external Steenrod squares, $\odot$-products, and $\tilde H^*(\mathbb RP^{n-1})$) are annihilated by $F_N$. Since $F_N$ is an algebra homomorphism, then all the reduced cohomology is annihilated by $F_N$.
\end{proof}

\begin{lem}
\label{aq-perp-sw}
Let $q=2^l$, $X$ be a $\Sy_q$-space, and let $N$ be the least power of two such that the map $F_N : x\mapsto x^N$ is zero on the image of $H^*(\Sy_q)$ in $H^*(X/\Sy_q)$. Then 
$$
w(\alpha_q^\perp(X)) = w(\alpha_q(X))^{N-1},
$$
and the Stiefel--Whitney class $w(\alpha_q^\perp(X))$ in the subquotient $\rH_{\Sy_q}(X)$ is expressed in terms of the generators of $H^*(\Sy_q)/I_q$ as follows
$$
w(\alpha_q^\perp(X)) = \sum_{k_1,\ldots, k_l\ge 0} c(k_1, \ldots, k_l)h_{k_1 \dots k_l},
$$
where the coefficient $c(k_1,\ldots, k_l)$ is defined by
\begin{equation}
\label{aq-pow-sw}
c(k_1,\ldots, k_l) = \binom{N-1}{k_1}\cdot\prod_{j=2}^l \binom{2^{j-1}(N-1) - k_{j-1} - 2k_{j-2} - \dots - 2^{j-2}k_1}{k_j},
\end{equation}
if the binomial coefficients are not defined, we assume they are zero.
\end{lem}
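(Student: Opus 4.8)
The plan is to establish the two displayed formulas in turn, treating the first as the geometric input and the second as a bookkeeping computation. For the first formula $w(\alpha_q^\perp(X)) = w(\alpha_q(X))^{N-1}$, the key point is that $\alpha_q$ is \emph{stably trivial of a controlled order}: $\alpha_q$ always sits inside the trivial bundle $V_q\times X$ as the orthogonal complement of the diagonal, so $\alpha_q\oplus\epsilon^1$ is trivial and hence $w(\alpha_q)$ is invertible with $w(\alpha_q)^{-1} = w(\alpha_q^\perp)$ up to the usual stable ambiguity. Thus it suffices to show $w(\alpha_q)^N = 1$, i.e. that the inverse of $w(\alpha_q)$ is the truncation of the geometric series $1 + \bar w + \bar w^2 + \cdots$ after finitely many terms, which forces $w(\alpha_q^\perp) = w(\alpha_q)^{N-1}$. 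To see $w(\alpha_q)^N=1$: write $w(\alpha_q) = 1 + a$ where $a$ lies in the augmentation ideal of the image of $H^*(\Sy_q)$ in $H^*(X/\Sy_q)$ (this uses Lemma~\ref{aq-split}, which exhibits all Stiefel--Whitney classes of $\alpha_q$ as polynomials in $\Sqe^{l-i}c_i$, hence in the image of $H^*(\Sy_q)$). Then $w(\alpha_q)^N = (1+a)^N = 1 + a^N$ since $N$ is a power of two and we are in characteristic $2$, and $a^N = F_N(a) = 0$ by the hypothesis on $N$. This gives the first formula; one should remark that over the non-compact base the identity holds after restriction to the compact model as in the conventions of Section~\ref{k-partition-sec}, or simply at the level of the universal base $\Po_q(\mathbb R^n)$ for $X = \Qo_q(\mathbb R^n)$ via Lemma~\ref{Qq-fro}.

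For the second formula I would reduce modulo the ideal $I_q$ and use Lemma~\ref{reduced-coh}, which identifies the relevant subquotient with the polynomial ring $\mathbb F_2[\Sqe^{l-1}c_1, \ldots, c_l]$. In this ring the total Stiefel--Whitney class of $\alpha_q$ factors, by Lemmas~\ref{aq-split} and~\ref{sw-sq}, as a product of the classes of the summands $\Sqe^{l-i}\gamma_i$; since $w(\gamma_i) = 1 + c_i$ at the $i$-th stage and each $\Sqe^{j}$ doubles degrees and is multiplicative, one gets
$$
w(\alpha_q) \equiv \prod_{i=1}^l \left(1 + \Sqe^{l-i}c_i\right) \pmod{I_q}.
$$
Raising to the $(N-1)$-st power and expanding by the binomial theorem over $\mathbb F_2$ then gives $w(\alpha_q^\perp) \equiv \prod_{i=1}^l (1 + \Sqe^{l-i}c_i)^{N-1}$, and the task is to read off the coefficient of the monomial $h_{k_1\dots k_l} = \prod_i (\Sqe^{l-i}c_i)^{k_i}$. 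The subtlety is that the factors are not of the same degree: $\Sqe^{l-i}c_i$ has degree $2^{l-i}$, so a naive coefficient extraction does not immediately separate the exponents. The trick is to expand the factors \emph{in order of increasing degree}, i.e. start from $(1+c_l)^{N-1}$, then multiply by $(1 + \Sqe c_{l-1})^{N-1}$, and so on; at each stage the new variable has degree exactly twice the sum of degrees available so far, so the reduction relation $(\Sqe^{l-j}c_j)^n = 0$ (from Lemma~\ref{Qq-reduced-coh}, or rather its abstract version) truncates the exponents and the nested binomial coefficients in~(\ref{aq-pow-sw}) appear as the count of ways to distribute the remaining power of $(1 + \Sqe^{l-j}c_j)^{N-1}$ after $k_{j-1}$ of the ``slots'' of weight $1$, $k_{j-2}$ of weight $2$, etc., have already been used. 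A clean way to organize this is to set $t = c_l$ as a formal variable of degree $1$ and note that $\Sqe^{l-i}c_i$ behaves, for the purpose of counting, like $t^{2^{l-i}}$ times a fresh variable; then the coefficient of a prescribed monomial in $\prod_i (1 + t^{2^{l-i}} x_i)^{N-1}$ is exactly the product of binomials in~(\ref{aq-pow-sw}), where the argument $2^{j-1}(N-1) - k_{j-1} - 2k_{j-2} - \cdots - 2^{j-2}k_1$ records how much of the degree budget at stage $j$ has been consumed.

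The main obstacle is the second formula's bookkeeping: making the inductive ``expand lowest-degree-first'' argument fully rigorous requires care about the interaction between the binomial expansion and the degree grading, and one must check that no cross terms between different $\Sqe^{l-i}c_i$ produce the same monomial (they cannot, by the degree-doubling, but this needs to be stated). I would carry this out by a short induction on $l$: the case $l=1$ is the classical identity $w((\gamma)^\perp) = (1+c)^{N-1}$ on $\mathbb RP^{n-1}$ with $N$ the least power of two $\ge n$; the inductive step applies Lemma~\ref{sw-sq} to $\alpha_{2^l} = \Sqe(\alpha_{2^{l-1}}) \oplus \gamma_l$ together with the first formula of this lemma (already proved) to reduce the $2^l$-case to the $2^{l-1}$-case with the degree shift absorbed by the extra factor $(1+c_l)^{N-1}$, and then matching the coefficient recursion with the recursion implicit in~(\ref{aq-pow-sw}) finishes the proof.
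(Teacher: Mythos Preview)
Your argument for the first identity $w(\alpha_q^\perp)=w(\alpha_q)^{N-1}$ is fine and in fact more explicit than the paper's (which takes this step for granted): writing $w(\alpha_q)=1+a$ with $a$ in the image of $H^*(\Sy_q)$ and using the Frobenius $(1+a)^N=1+a^N$ is exactly right. The aside that $\alpha_q\oplus\epsilon^1$ is trivial is false ($\nu_q=\alpha_q\oplus\epsilon$ is the permutation bundle, not the trivial one), but you never actually use it, so no harm done.

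The real gap is in the second part. Your claimed factorization
\[
w(\alpha_q)\equiv\prod_{i=1}^l\bigl(1+\Sqe^{l-i}c_i\bigr)\pmod{I_q}
\]
is \emph{wrong} already for $l=2$. By Lemma~\ref{sw-sq} one has $w(\Sqe\gamma_1)\equiv 1+c_2+\Sqe c_1$ modulo $I_4$ (note the extra $c_2$), hence
\[
w(\alpha_4)\equiv(1+c_2+\Sqe c_1)(1+c_2)=1+\Sqe c_1+c_2^{\,2}+c_2\Sqe c_1,
\]
whereas your product gives $1+c_2+\Sqe c_1+c_2\Sqe c_1$. The point is that $\Sqe$ on a bundle does not simply apply $\Sqe$ to the total class: Lemma~\ref{sw-sq} always brings in the factors $(1+c)^{k-i}$, so $w(\Sqe^{l-i}\gamma_i)\not\equiv 1+\Sqe^{l-i}c_i$ for $i<l$. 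The whole combinatorial scheme of ``expand $\prod(1+\Sqe^{l-i}c_i)^{N-1}$ lowest-degree-first'' therefore rests on a false premise.

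Your closing paragraph, on the other hand, points at the correct route, which is precisely the paper's. The inductive step is $w(\alpha_{2^l})^{N-1}=w(\Sqe\alpha_{2^{l-1}})^{N-1}(1+c_l)^{N-1}$; modulo $\odot$-terms the splitting-principle form of Lemma~\ref{sw-sq} gives $w(\Sqe\nu)^m\equiv w(\Sqe(\nu^{\oplus m}))$, so one applies Lemma~\ref{sw-sq} directly to $\nu=(N-1)\alpha_{2^{l-1}}$, whose dimension is $(2^{l-1}-1)(N-1)$ and whose total class is known by induction. Extracting the coefficient of $c_l^{\,k_l}$ then produces exactly the last binomial factor in~(\ref{aq-pow-sw}), and the recursion closes. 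The paper compresses all of this into the single sentence ``apply Lemma~\ref{sw-sq} repeatedly, starting from $(1+c_1)^{N-1}$''; your inductive sketch is the same idea, but you should discard the incorrect product formula and make the application of Lemma~\ref{sw-sq} to $(N-1)\alpha_{2^{l-1}}$ explicit.
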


\begin{proof}
It is enough to calculate $w(\alpha_q)^{N-1}$ for $X=B\Sy_q$.

In this case the formula is obtained by applying Lemma~\ref{sw-sq} repeatedly, starting from the class
$$
w(\alpha_q^\perp) = (1+c_1)^{N-1}.
$$
\end{proof}

Note that in (\ref{aq-pow-sw}) we can substitute any $m$ instead of $N-1$ and obtain the formula for $w(\alpha_q)^m$ over any $\Sy_q$-space $X$. When applying this lemma to the case $X=\Qo_q(\mathbb R^n)$ we choose $N$ to be the least power of two $\ge n$ by Lemma~\ref{Qq-fro}, and impose the natural conditions $k_1,\ldots, k_l\le n-1$.

\section{Regular embeddings of $\mathbb R^n$}
\label{Rn-emb-sec}

Now we are prepared to consider regular embeddings of $\mathbb R^n$. First, consider one of the simplest cases $q=4$.

\begin{defn}
Denote the function
$$
N(x) = \min \{2^l : 2^l \ge x\}.
$$
\end{defn}

Let $\alpha_4 = \alpha_4(\Qo_4(\mathbb R^n))$. Lemma~\ref{aq-perp-sw} shows that 
$$
\bar w(\alpha_4) = \sum_{0\le k_1,k_2\le n-1} c(k_1, k_2) (\Sqe c_1)^{k_1}c_2^{k_2},
$$
modulo the ideal $I_q$ (generated by $c_1\odot 1$ in this case). The coefficients are
\begin{equation}
c(k_1,k_2) = \binom{N-1}{k_1} \binom{2(N-1) - k_1}{k_2} = \binom{2N-1-k_1-1}{k_2},
\end{equation}
where $N=N(n)$.

It is well-known that the binomial coefficients $\binom{x+y}{y}$ are nonzero iff in the binary representation of $x$ and $y$ none of the positions is taken by $1$ in both $x$ and $y$. Call such two numbers \emph{binary disjoint} and write $x\&y=0$. Since $2N-1$ is a large enough string of $1$'s in the binary representation, then $c(k_1,k_2)\not=0$ iff $(k_1+1)\& k_2=0$. Thus we have
\begin{equation}
\label{bar-l-4}
\barl (\alpha_4) = \max\{2k_1+k_2 : 0\le k_1,k_2\le n-1, (k_1+1)\& k_2 = 0\}.
\end{equation}

\begin{defn}
Define 
$$
\nu(x) = \max\{y \in\mathbb Z^+: y\le x,\ x\&y=0\},
$$
note that for any positive integer $x$
$$
x+\nu(x) = N(x+1)-1.
$$
\end{defn}

\begin{thm}
\label{Q4-Rn}
In the cohomology $H^*(\Po_4(\mathbb R^n))$ modulo $I_q$ we have
$$
\barl \left(\alpha_4(\Qo_4(\mathbb R^n))\right) = 2n-2 + \nu(n) = n + N(n+1)-3.
$$
\end{thm}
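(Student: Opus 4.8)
The theorem packs two assertions into one line. The second equality $2n-2+\nu(n)=n+N(n+1)-3$ is immediate from the relation $n+\nu(n)=N(n+1)-1$ recorded right after the definition of $\nu$, so the entire content is the first equality. By formula (\ref{bar-l-4}) (which already absorbs all the topology, Lemmas~\ref{Qq-reduced-coh} and~\ref{aq-perp-sw} having reduced $\barl(\alpha_4)$ to the maximal degree of a monomial $(\Sqe c_1)^{k_1}c_2^{k_2}$ with nonzero coefficient), it remains to prove the purely combinatorial claim
$$
\max\{\,2k_1+k_2 \;:\; 0\le k_1,k_2\le n-1,\ (k_1+1)\&k_2=0\,\}=2n-2+\nu(n).
$$
The plan is to prove the two inequalities separately. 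Throughout, $n$ is a positive integer; I write $N=N(n+1)=2^m$, so that $2^{m-1}\le n\le 2^m-1$, and put $n=2^{m-1}+r$ with $0\le r\le 2^{m-1}-1$. Then $\nu(n)=2^{m-1}-1-r$ (the bitwise complement of $r$ within $m-1$ bits) and the target value equals $3\cdot 2^{m-1}+r-3$.

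For the lower bound I would simply exhibit the admissible pair $(k_1,k_2)=(n-1,\nu(n))$: here $k_1=n-1$ is in range; $\nu(n)<n$ for every $n\ge1$ (otherwise $n\&n=0$ forces $n=0$), so $\nu(n)\le n-1$; and $(k_1+1)\&k_2=n\&\nu(n)=0$ by the very definition of $\nu$. This pair contributes $2k_1+k_2=2n-2+\nu(n)$, giving ``$\ge$''.

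For the upper bound I would, given any admissible pair, set $a=k_1+1\in\{1,\dots,n\}$ and split on whether the leading bit $2^{m-1}$ occurs in the binary expansion of $a$. If it does, write $a=2^{m-1}+a'$ with $0\le a'\le r$; since $k_2$ is binary disjoint from $a$, it omits bit $m-1$ and is disjoint from $a'$, whence $k_2\le(2^{m-1}-1)-a'$, and therefore
$$
2k_1+k_2=2a-2+k_2\le 2^m+2a'-2+(2^{m-1}-1-a')=3\cdot 2^{m-1}+a'-3\le 3\cdot 2^{m-1}+r-3 .
$$
If bit $m-1$ does not occur in $a$, then $a\le 2^{m-1}-1$, so $k_1\le 2^{m-1}-2$ and $k_2\le n-1=2^{m-1}+r-1$, which gives $2k_1+k_2\le 2(2^{m-1}-2)+(2^{m-1}+r-1)=3\cdot 2^{m-1}+r-5<3\cdot 2^{m-1}+r-3$. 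In either case $2k_1+k_2\le 2n-2+\nu(n)$, and together with the previous paragraph this closes the gap.

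The only genuine idea here is the case split on the leading bit of $k_1+1$; once that is in place both cases are one-line binary arithmetic, and the equality case of the first one is precisely the witness used for the lower bound. I do not anticipate any real difficulty: the topological input is entirely contained in (\ref{bar-l-4}), and the remaining obstacle is just to rewrite the binomial-coefficient condition $(k_1+1)\&k_2=0$ in a form that makes the maximization transparent — which the leading-bit decomposition $n=2^{m-1}+r$ accomplishes.
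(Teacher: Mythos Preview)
Your proof is correct. Like the paper, you reduce everything to the combinatorial maximization in (\ref{bar-l-4}) and then finish with a case split; the difference is only in the splitting criterion. The paper splits on whether $k_1+1\ge k_2$ or $k_1+1<k_2$, fixing one variable and setting the other to $\nu(\cdot)$ of it, while you split on whether the leading bit $2^{m-1}$ of $n$ appears in $k_1+1$. After the split, both arguments are one line of binary arithmetic and share the same witness $(k_1,k_2)=(n-1,\nu(n))$ for the lower bound. Your version has the mild advantage that the upper bound is made completely explicit; the paper's treatment of its second case is stated loosely (the asserted case-2 maximum $\nu(n-1)+N(n)-3$ is not in general where that case is maximized), though this is harmless since the first case dominates anyway.
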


\begin{proof}
Let us analyze (\ref{bar-l-4}). If $k_1+1\ge k_2$, than we can assume $k_2=\nu(k_1+1)$, then $\barl(\alpha_4)\ge 2k_1 + \nu(k_1+1)$, and the maximum is attained for $k_1=n-1$.

If $k_1+1<k_2$, then we can assume $k_1 = \nu(k_2)-1$, in this case we have an estimate $\barl (\alpha_4)\ge 2\nu(k_2)-2+k_2 = \nu(k_2) + N(k_2+1)-3$, the maximum is $\nu(n-1)+N(n)-3$, which is less than the previous estimate.
\end{proof}

Now we apply Lemma~\ref{k-partition-Rn} and deduce the following.

\begin{cor}
\label{4k-Rn} 
Let $k$ be divisible by $4$. If there exists a linearly $k$-regular map $f:\mathbb R^n\to \mathbb R^m$, then
$$
m\ge k + \frac{k}{4} \left(n+N(n+1)-3\right).
$$
\end{cor}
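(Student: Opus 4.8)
The plan is to feed the $q=4$ computation of Theorem~\ref{Q4-Rn} into Lemma~\ref{k-partition-Rn}, using the decomposition of $k$ into equal summands. Concretely, since $4\mid k$, write $k=q_1+\dots+q_l$ with $l=k/4$ and $q_1=\dots=q_l=4=2^2$; nothing in Lemma~\ref{k-partition-Rn} requires the powers of two $q_i$ to be distinct, so this is a legitimate decomposition. Applying Lemma~\ref{k-partition-Rn} to any linearly $k$-regular map $f:\mathbb R^n\to\mathbb R^m$ then gives
$$
m\ge k+\sum_{i=1}^{k/4}\barl\bigl(\alpha_4(\Qo_4(\mathbb R^n))\bigr)=k+\frac{k}{4}\,\barl\bigl(\alpha_4(\Qo_4(\mathbb R^n))\bigr),
$$
all $k/4$ terms of the sum being identical.

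Next I would substitute the value supplied by Theorem~\ref{Q4-Rn}, namely $\barl(\alpha_4(\Qo_4(\mathbb R^n)))=n+N(n+1)-3$. One small point deserves a word of care: Theorem~\ref{Q4-Rn} identifies this number as the top nonvanishing degree of $\bar w(\alpha_4)$ in the subquotient $\rH_{\Sy_4}(\Qo_4(\mathbb R^n))$, i.e.\ modulo the ideal $I_q$; but a class nonzero in $\rH_{\Sy_4}(\Qo_4(\mathbb R^n))$ is certainly nonzero in the full equivariant cohomology $H^*_{\Sy_4}(\Qo_4(\mathbb R^n))$, so we do get $\barl(\alpha_4(\Qo_4(\mathbb R^n)))\ge n+N(n+1)-3$, which is all that is needed as input to Lemma~\ref{k-partition-Rn}. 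Combining this with the previous display yields
$$
m\ge k+\frac{k}{4}\bigl(n+N(n+1)-3\bigr),
$$
as claimed.

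I do not expect a genuine obstacle: the substantive content is already in place, namely the configuration-space reduction and the tree argument of Lemma~\ref{k-partition} (hence Lemma~\ref{k-partition-Rn}), and the binomial-coefficient analysis of $\barl(\alpha_4)$ carried out in Theorem~\ref{Q4-Rn} via Lemma~\ref{aq-perp-sw}. The only things to watch are the two trivialities flagged above — that repeated summands are permitted in the additive decomposition, and that one passes from the subquotient $\rH$ back to the full cohomology in the favourable direction. One might also remark, using the \emph{equality} in Theorem~\ref{Q4-Rn}, that this particular choice of decomposition of $k$ into fours cannot be pushed further by the same method.
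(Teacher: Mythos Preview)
Your proof is correct and matches the paper's approach exactly: the paper simply says ``Now we apply Lemma~\ref{k-partition-Rn} and deduce the following,'' using the decomposition $k=4+\dots+4$ and the value of $\barl(\alpha_4)$ from Theorem~\ref{Q4-Rn}. Your added remark that nonvanishing in the subquotient $\rH$ forces nonvanishing in the full equivariant cohomology (hence gives the needed lower bound on $\barl$) is a valid and worthwhile clarification that the paper leaves implicit.
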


The ``greedy'' lower bound in Theorem~\ref{Q4-Rn} using Lemma~\ref{aq-perp-sw} and (\ref{aq-pow-sw}) can be reproduced for any $q=2^l$. Let us state the appropriate result. There is no explicit formula in this theorem, but it can be easily computed in any particular case.

\begin{thm}
\label{Qq-Rn}
In the cohomology $H^*(\Po_q(\mathbb R^n))$ we have
$$
\barl \left(\alpha_q(\Qo_q(\mathbb R^n))\right) \ge \sum_{i=1}^l 2^{l-i}k_i,
$$
where $k_i$ are defined recursively as follows:
$$
k_1 = n-1,
$$
and for $i\ge 2$
$$
k_i = \max\{ x \in \mathbb Z^+: x\le n-1,\ x\&(k_{i-1}+1 + 2(k_{i-2}+1) + \dots + 2^{i-2}(k_1+1)) = 0\},
$$
where $\&$ denote the bitwise `and' operation.
\end{thm}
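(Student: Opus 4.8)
The claim is a lower bound for $\barl(\alpha_q(\Qo_q(\mathbb R^n)))$ obtained by a ``greedy'' choice of exponents, and the natural strategy is to exhibit one explicit monomial in the dual Stiefel--Whitney class that survives in the subquotient $\rH_{\Sy_q}(\Qo_q(\mathbb R^n))$. First I would invoke Lemma~\ref{aq-perp-sw}: by Lemma~\ref{Qq-fro} we may take $N = N(n)$ as the relevant power of two, and then $\bar w(\alpha_q(\Qo_q(\mathbb R^n)))$ is expressed modulo $I_q$ as $\sum c(k_1,\ldots,k_l)h_{k_1\dots k_l}$ with $c(k_1,\ldots,k_l)$ given by formula~(\ref{aq-pow-sw}), and with the additional constraints $0\le k_i\le n-1$ coming from the relations $(\Sqe^{l-i}c_i)^n=0$ in Lemma~\ref{Qq-reduced-coh}. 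The degree of the monomial $h_{k_1\dots k_l}$ in the grading where $\Sqe^{l-i}c_i$ has degree $2^{l-i}$ is exactly $\sum_{i=1}^l 2^{l-i}k_i$, so it suffices to produce one index tuple with $c(k_1,\ldots,k_l)\ne 0$, all $k_i\le n-1$, and $\sum 2^{l-i}k_i$ equal to the asserted value.

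\textbf{The key computation.} The coefficient~(\ref{aq-pow-sw}) is a product of binomial coefficients $\binom{2^{j-1}(N-1)-k_{j-1}-2k_{j-2}-\dots-2^{j-2}k_1}{k_j}$ for $j=2,\ldots,l$, times $\binom{N-1}{k_1}$. Using the Lucas-type criterion recalled in the text (a binomial $\binom{x+y}{y}$ is nonzero mod $2$ iff $x\&y=0$, i.e.\ $x,y$ are binary disjoint), and noting that $N-1$ is a string of $1$'s, $\binom{N-1}{k_1}\ne 0$ automatically for $0\le k_1\le N-1$; in particular $k_1=n-1$ is admissible since $n-1\le N-1$. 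For the inductive step I rewrite the top argument of the $j$-th binomial: one checks the identity
$$
2^{j-1}(N-1) - \sum_{r=1}^{j-1}2^{j-1-r}k_r \;=\; \Bigl(k_j + \bigl[(k_{j-1}+1)+2(k_{j-2}+1)+\dots+2^{j-2}(k_1+1)\bigr]\Bigr) + (\text{remaining }1\text{'s})
$$
when $k_j$ is binary disjoint from $s_j := (k_{j-1}+1)+2(k_{j-2}+1)+\dots+2^{j-2}(k_1+1)$; more precisely the binomial $\binom{2^{j-1}(N-1)-\sum 2^{j-1-r}k_r}{k_j}$ is nonzero mod $2$ exactly when $k_j\& s_j=0$, provided $N$ is large enough that $2^{j-1}(N-1)-s_j$ has its low-order bits all equal to $1$ up past the top bit of $k_j$. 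This is where the precise choice $N=N(n)$ and the constraint $k_j\le n-1$ are used: $s_j < 2^{j-1}n \le 2^{j-1}N$, while $2^{j-1}(N-1)$ is $2^{j-1}$ copies of the all-ones block of length $\log_2 N$, so after subtracting $s_j$ the low bits below $2^{j-1}N$ remain a solid block of $1$'s, which dominates $k_j<n\le N$. Hence the condition for $c(k_1,\ldots,k_l)\ne 0$ reduces exactly to $k_j\& s_j = 0$ for each $j\ge 2$. The greedy recursion in the statement simply takes, at each stage, the largest $k_j\le n-1$ with $k_j\& s_j=0$; this is admissible by the above, so the monomial $h_{k_1\dots k_l}$ for these exponents appears with nonzero coefficient, giving $\barl(\alpha_q(\Qo_q(\mathbb R^n)))\ge \sum_{i=1}^l 2^{l-i}k_i$.

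\textbf{Surviving in the subquotient.} One subtlety: Lemma~\ref{aq-perp-sw} only expresses $w(\alpha_q^\perp)$ in the subquotient $\rH_{\Sy_q}(\Qo_q(\mathbb R^n))$, so I must argue that the chosen monomial, nonzero there, is also nonzero in $H^*_{\Sy_q}(\Qo_q(\mathbb R^n))$ itself — but this is automatic, since any class nonzero in a subquotient is \emph{a fortiori} nonzero in the ambient group, and $\barl$ is defined via nonvanishing in the full equivariant cohomology. Finally I would confirm that the monomial genuinely has degree $\sum 2^{l-i}k_i$ and that $\barl$, being the top degree in which $\bar w$ is nonzero, is therefore at least this number; equality need not be claimed (the theorem only asserts ``$\ge$'').

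\textbf{Main obstacle.} The delicate point is the bookkeeping in the key computation: verifying that, after the iterated subtraction $2^{j-1}(N-1)-k_{j-1}-2k_{j-2}-\dots-2^{j-2}k_1$, the binary digits below the top bit of $k_j$ really do form an unbroken block of $1$'s, so that the binomial coefficient nonvanishing criterion collapses to the clean condition $k_j\&s_j=0$. This requires carefully tracking carries across the $j-1$ subtractions and using both $N=N(n)\ge n$ and all $k_i\le n-1$; it is entirely elementary but is the step where an off-by-one in the choice of $N$ would break the argument. Everything else is a direct appeal to Lemmas~\ref{Qq-fro}, \ref{aq-perp-sw}, and \ref{Qq-reduced-coh}.
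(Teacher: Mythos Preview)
Your proposal is correct and follows exactly the approach the paper intends: the paper does not give a separate proof of Theorem~\ref{Qq-Rn} at all, merely remarking that ``the `greedy' lower bound in Theorem~\ref{Q4-Rn} using Lemma~\ref{aq-perp-sw} and (\ref{aq-pow-sw}) can be reproduced for any $q=2^l$,'' and your write-up carries this out. Your key identity $A_j := 2^{j-1}(N-1)-\sum_{r=1}^{j-1}2^{j-1-r}k_r = (2^{j-1}N-1)-s_j$ together with the observation that $2^{j-1}N-1$ is a solid block of $1$'s (so subtraction of $s_j<2^{j-1}N$ is bitwise complement) is precisely the clean way to see that the Lucas criterion collapses to $k_j\&s_j=0$; this is slightly more explicit than the paper's $q=4$ discussion and handles the carry bookkeeping you flagged as the obstacle.
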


It is not known whether this bound is the best possible that can be obtained from (\ref{aq-pow-sw}). In case $n$ is a power of two this theorem gives $k_i=n-1$, i.e. the Chisholm theorem. Applying Lemma~\ref{k-partition-Rn}, we obtain the following generalization of the Chisholm theorem.

\begin{cor}
\label{qsum-Rn} 
Denote the lower bound in Theorem~\ref{Qq-Rn} by $l(q, n)$. Suppose $k=q_1+\ldots+q_s$ is a partition of $k$ into powers of two (e.g. the binary representation). If there exists a linearly $k$-regular map $f:\mathbb R^n\to \mathbb R^m$, then
$$
m\ge k + \sum_{i=1}^s l(q_i, n).
$$
\end{cor}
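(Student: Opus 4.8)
The plan is to chain together Lemma~\ref{k-partition-Rn} and Theorem~\ref{Qq-Rn}; the corollary is then a purely formal bookkeeping step, with all the real content already contained in those two results.

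First I would apply Lemma~\ref{k-partition-Rn} to the given partition $k = q_1 + \dots + q_s$ into powers of two. This immediately yields, for any linearly $k$-regular map $f : \mathbb R^n \to \mathbb R^m$,
$$
m \ge k + \sum_{i=1}^s \barl\left(\alpha_{q_i}(Q_{q_i}(\mathbb R^n))\right).
$$
Next, since each $q_i$ is a power of two, Theorem~\ref{Qq-Rn} applies to each summand and gives $\barl(\alpha_{q_i}(\Qo_{q_i}(\mathbb R^n))) \ge l(q_i, n)$, where $l(q_i, n)$ denotes exactly the recursively defined quantity appearing in the statement of that theorem. Substituting these $s$ inequalities into the displayed bound gives $m \ge k + \sum_{i=1}^s l(q_i, n)$, which is the assertion.

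There is essentially no obstacle here. The only things worth checking are that the bundle $\alpha_{q}(Q_{q}(\mathbb R^n))$ appearing in Lemma~\ref{k-partition-Rn} is literally the bundle $\alpha_q(\Qo_q(\mathbb R^n))$ whose top dual Stiefel--Whitney index is estimated in Theorem~\ref{Qq-Rn} (immediate from the definitions of Section~\ref{Qq-defn-sec}), and that the binary expansion is indeed one permissible choice of partition into powers of two (hence the parenthetical example). The substantive work --- the fiberwise equivariant map-of-bundles construction behind Lemma~\ref{k-partition} and the greedy Steenrod-square computation of Lemma~\ref{aq-perp-sw} underlying Theorem~\ref{Qq-Rn} --- has already been done, so nothing new needs to be proved.
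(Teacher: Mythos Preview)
Your proposal is correct and matches the paper's approach exactly: the paper simply states that the corollary follows by ``applying Lemma~\ref{k-partition-Rn}'' to the bound from Theorem~\ref{Qq-Rn}, which is precisely the two-step chaining you describe. Your remark identifying $\alpha_q(Q_q(\mathbb R^n))$ with $\alpha_q(\Qo_q(\mathbb R^n))$ (up to the contractible $\mathbb R^n$ factor, cf.\ Lemma~\ref{metric-Qq}) is the only thing the paper leaves implicit, and you handle it correctly.
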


\section{Cohomology of bundles $P_q(\xi)$}
\label{Qq-bundle-sec}

Now consider the bundle $Q_q(\xi)\to M$ associated with some vector bundle $\xi : E(\xi)\to M$. Put $P_q(\xi) = Q_q(\xi)/\Sy_q$. We have the following statement about the equivariant cohomology of $P_q(\xi)$.

\begin{lem}
\label{coh-free-module}
Let $q=2^l$. Suppose $\xi\to M$ is an $n$-dimensional vector bundle. The quotient $H^*(P_q(\xi))/(I_q H^*(M))$ has a free $H^*(M)$-submodule, generated by the classes $h_{k_1\dots k_l}$ with $0\le k_1,\ldots, k_l\le n-1$ from $H^*(\Sy_q)$.
\end{lem}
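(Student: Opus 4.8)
The plan is to argue by induction on $l$, bootstrapping from the projectivization of $\xi$ and climbing up via the external Steenrod square construction of Section~\ref{ext-sq-sec} performed fibrewise over $M$. For $l=1$ we have $P_2(\xi)=\mathbb P(\xi)$ and $I_2=0$, and the standard description
\[
H^*(\mathbb P(\xi))=H^*(M)[c_1]\big/\bigl(c_1^n+w_1(\xi)c_1^{n-1}+\dots+w_n(\xi)\bigr)
\]
realizes $H^*(\mathbb P(\xi))$ as a free $H^*(M)$-module on $h_0=1,h_1=c_1,\dots,h_{n-1}=c_1^{n-1}$, which is exactly the assertion.

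For the step $q\to 2q$ one relativizes~\eqref{Qq-constr}: the free $\mathbb Z_2$-cover $S(\xi)\to\mathbb P(\xi)$ (the unit sphere bundle over the projectivization) together with the swap of factors of $P_q(\xi)\times_M P_q(\xi)$ gives
\[
P_{2q}(\xi)=\bigl(P_q(\xi)\times_M P_q(\xi)\times_M S(\xi)\bigr)\big/\mathbb Z_2,
\]
which is the construction of Section~\ref{ext-sq-sec} with the fixed sphere replaced by the sphere bundle of $\xi$. Since $\Sqe$ is multiplicative and carries the generators of $H^*(\Sy_q)/I_q$ to the first $l$ generators of $H^*(\Sy_{2q})/I_{2q}$, one has $h^{(2q)}_{k_1\dots k_{l+1}}=\Sqe\bigl(h^{(q)}_{k_1\dots k_l}\bigr)\,c_{l+1}^{k_{l+1}}$; these classes, and all of $I_{2q}$, are pulled back from $B\Sy_{2q}$, so they live on all of $P_{2q}(\xi)$. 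Analysing the Leray--Serre spectral sequence of $P_{2q}(\xi)\to\mathbb P(\xi)$ as in the proofs of Lemmas~\ref{ext-sq} and~\ref{Qq-reduced-coh} --- with the base $\mathbb{RP}^{n-1}$ now replaced by $\mathbb P(\xi)$ and $H^*(M)$ carried along in the coefficients --- expresses $H^*(P_{2q}(\xi))$ as generated over $H^*(M)$ by $c_{l+1}$ and the $\Sqe$-, $\odot$- and $\iota$-classes built from $H^*(P_q(\xi))$, subject to the relations of Lemma~\ref{ext-sq} on top of the inductively known ones. Dividing by $I_{2q}H^*(M)$ annihilates the $\odot$-classes and the $\Sqe$-image of $I_q$; a diagonal $\iota$-class has the form $\Sqe(\,\cdot\,)\,c_{l+1}^{n-1}$, hence equals one of the $h^{(2q)}_{\vec k\,(n-1)}$ already in our list, while the off-diagonal $\iota$-classes restrict on each fibre $\Po_{2q}(\mathbb R^n)$ to classes linearly independent from the image of $H^*(\Sy_{2q})$, so they span a complement disjoint from the $H^*(M)$-span of the $h^{(2q)}_{\vec k\,j}$ with $0\le k_i,j\le n-1$. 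Together with the inductive hypothesis on $\{h^{(q)}_{\vec k}\}$, this gives that $\{h^{(2q)}_{\vec k\,j}:0\le k_i,j\le n-1\}$ spans a free $H^*(M)$-submodule of $H^*(P_{2q}(\xi))/(I_{2q}H^*(M))$, completing the induction.

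The main obstacle --- and the reason the Leray--Hirsch theorem cannot simply be quoted for $P_q(\xi)\to M$ --- is that $H^*(P_q(\xi))$ is \emph{not} a free $H^*(M)$-module in general: the off-diagonal $\iota(\,\cdot\,)$-classes of Section~\ref{ext-sq-sec} are supported near the poles of the spheres $S(\xi_x)$ and so need a nowhere-vanishing section of $\xi$ to be defined globally, which typically does not exist. The content of the lemma is that the subalgebra spanned by the classes $h_{\vec k}$, which \emph{do} come from $B\Sy_q$ and are therefore global, behaves like a free summand once $I_q$ is divided out. The delicate point in the induction is keeping the $\iota$-contributions separated, over $H^*(M)$, from the $h_{\vec k}$ and from $I_qH^*(M)$; the mechanism is Lemma~\ref{Qq-reduced-coh}, which says that on a single fibre $\Po_q(\mathbb R^n)$ the classes $h_{\vec k}|_{\text{fibre}}$ with $0\le k_i\le n-1$ are linearly independent modulo the image of $I_q$, i.e.\ their $\mathbb F_2$-span meets that image trivially; since all the classes in play are permanent cocycles of filtration zero in the Leray--Serre spectral sequence of $P_q(\xi)\to M$, this fibrewise independence propagates to $H^*(M)$-linear independence over the total space.
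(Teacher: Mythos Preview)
Your approach is genuinely different from the paper's, and the central step has a real gap.

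The paper does not induct on $l$ and does not analyse the spectral sequence of $P_{2q}(\xi)\to\mathbb P(\xi)$ at all. Instead it uses the Gysin map $\pi_!:H^*(P_q(\xi))\to H^{*-(q-1)(n-1)}(M)$ for the bundle of closed manifolds $\pi:P_q(\xi)\to M$. Given a hypothetical relation $\sum_{\vec k} m(\vec k)\,h_{\vec k}=x\,m$ with $x\in I_q$, one multiplies by $h_{n-1-k_1,\dots,n-1-k_l}$ for the lexicographically extremal $\vec k$ with $m(\vec k)\ne 0$ and applies $\pi_!$. The projection formula together with Lemma~\ref{Qq-reduced-coh} (elements of $I_q$ restrict to zero in the top fibre degree, while $h_{n-1,\dots,n-1}$ restricts to the fundamental class) yields $m(\vec k)=0$. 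This is a five-line argument.

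The gap in your proof is the last sentence: ``since all the classes in play are permanent cocycles of filtration zero in the Leray--Serre spectral sequence of $P_q(\xi)\to M$, this fibrewise independence propagates to $H^*(M)$-linear independence over the total space.'' This implication is not valid as stated. That $1\otimes h_{\vec k}|_F\in E_2^{0,*}$ is a permanent cocycle (because $h_{\vec k}$ is global) tells you nothing about whether $m\otimes h_{\vec k}|_F\in E_2^{|m|,*}$ is \emph{hit} by a differential $d_r$ coming from $E_r^{|m|-r,*}$. If it is, then $m\cdot h_{\vec k}$ has higher Leray filtration than $|m|$, and a nontrivial $H^*(M)$-relation among the $h_{\vec k}$ modulo $I_qH^*(M)$ can exist without any trace on a single fibre. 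You yourself observe that Leray--Hirsch fails here because the $\iota$-classes need not extend; but Leray--Hirsch (or at least degeneration on the relevant piece) is precisely what would be needed to justify your propagation claim. The inductive structure you set up does not circumvent this: the same issue recurs at each stage when you try to pass from independence on the fibre $\Po_q(\mathbb R^n)\times\Po_q(\mathbb R^n)$ to independence over $H^*(\mathbb P(\xi))$.

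The clean way to close the gap is exactly the paper's: exploit that the top fibre row $E_r^{*,(q-1)(n-1)}$ admits no outgoing differentials, hence $E_\infty^{*,(q-1)(n-1)}\hookrightarrow E_2^{*,(q-1)(n-1)}\cong H^*(M)$, and this edge map is $\pi_!$. Multiplying by the complementary monomial $h_{\vec{n-1}-\vec k}$ pushes the term you want to detect into the top fibre degree, where it becomes visible in $H^*(M)$ while everything coming from $I_q$ dies (again by Lemma~\ref{Qq-reduced-coh}). So the missing ingredient in your argument is essentially the Gysin map, and once you put it in, the induction on $l$ and the analysis of $\odot$- and $\iota$-classes become unnecessary.
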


\begin{proof}
Compare the proof with the proof of Lemma~\ref{Qq-reduced-coh}. Suppose we have a nontrivial relation 
\begin{equation}
\label{l-dep}
\sum_{0\le k_1, \ldots, k_l \le n-1} m(k_1, \ldots, k_l) h_{k_1 \dots k_l} = xm,
\end{equation}
where $m(k_1, \ldots, k_l), m\in H^*(M)$ and $x\in I_q$. Note that 
$$
\pi : P_q(\xi)\to M
$$ 
is a bundle of manifolds, and the cohomology map $\pi_! : H^*(P_q(\xi))\to H^*(M)$ of degree $-(q-1)(n-1)$ is defined. Applying this map to (\ref{l-dep}) we obtain
$$
m(n-1,\ldots, n-1) = 0.
$$
Now consider the lexicographically largest index $(k_1,\ldots, k_l)$ with nonzero $m(k_1, \ldots, k_l)$, multiply (\ref{l-dep}) by $h_{n-1-k_1\dots n-1-k_l}$, and then apply $\pi_!$. Using Lemma~\ref{Qq-reduced-coh} we again obtain $m(k_1, \ldots, k_l)=0$.
\end{proof}

Now consider the ($\Sy_q$-equivariant) dual Stiefel--Whitney class of the bundle $\alpha_q(Q_q(\xi))$ in $H^*(P_q(\xi))$, actually we consider it in $\rH^*(P_q(\xi))$. From naturality of this class it is sufficient to consider the universal bundle $\gamma^n\to G^n$ over the infinite Grassmannian of $n$-subspaces. From Lemma~\ref{coh-free-module} we obtain a decomposition modulo $I_q H^*(G^n)$
\begin{equation}
\label{char-classes-defn}
\bar w(\alpha_q(Q_{G^n}^q(\gamma^n))) \ge \sum_{0\le k_1,\ldots, k_l\le n-1} h_{k_1\dots k_l} t_{k_1 \dots k_l}.
\end{equation}

Hence, the following is proved.

\begin{lem}
\label{char-classes}
Equation~(\ref{char-classes-defn}) defines the characteristic classes $t_{k_1 \dots k_l}(\xi)$ of a vector bundle $\xi$, with the following property:
$$
\barl \left(\alpha_q(Q_q(\xi))\right) \ge \max_{0\le k_1,\ldots, k_l\le n-1} \{\dim  h_{k_1\dots k_l} + \dim t_{k_1 \dots k_l}(\xi)\},
$$ 
where by the dimension if a cohomology class $t_{k_1 \dots k_l}(\xi)$ we mean the maximum dimension of a nonzero homogeneous component of $t_{k_1 \dots k_l}(\xi)$.
\end{lem}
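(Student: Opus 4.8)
The statement of Lemma~\ref{char-classes} is essentially a packaging of the previous lemma together with the naturality of Stiefel--Whitney classes, so the proof is short and I would organize it around two ideas: (1) reducing to the universal bundle over the Grassmannian, and (2) using the free-module structure of Lemma~\ref{coh-free-module} to \emph{define} the coefficient classes $t_{k_1\dots k_l}$ unambiguously. The first step is to observe that $Q_q(\xi)$ is natural in $\xi$: a bundle map $\xi\to\gamma^n$ covering a map $M\to G^n$ induces a fiberwise map $Q_q(\xi)\to Q_q(\gamma^n)$, hence a map $P_q(\xi)\to P_q(\gamma^n)$ compatible with the $\Sy_q$-actions and carrying $\alpha_q(Q_q(\gamma^n))$ back to $\alpha_q(Q_q(\xi))$. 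Therefore it suffices to write down the classes $t_{k_1\dots k_l}$ for the universal bundle and pull back.

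**Defining the classes.** Over $G^n$ I would look at $\bar w\bigl(\alpha_q(Q_q(\gamma^n))\bigr)\in H^*(P_q(\gamma^n))$ and pass to the quotient $H^*(P_q(\gamma^n))/(I_q H^*(G^n))$. By Lemma~\ref{coh-free-module} this quotient contains a free $H^*(G^n)$-submodule with basis $\{h_{k_1\dots k_l}\}_{0\le k_i\le n-1}$. The point is that $\bar w(\alpha_q(Q_q(\gamma^n)))$ need not lie in that submodule; but the inequality sign in~(\ref{char-classes-defn}) is exactly the statement that we \emph{choose} any expression of (the projection of) $\bar w$ modulo $I_q H^*(G^n)$ as a sum $\sum h_{k_1\dots k_l}\, t_{k_1\dots k_l}$ with $t_{k_1\dots k_l}\in H^*(G^n)$ (such an expression exists after possibly discarding terms not in the free submodule, which only decreases $\barl$; freeness guarantees the $t_{k_1\dots k_l}$ are then well defined). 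Naturality then transports this to~(\ref{char-classes-defn}) for arbitrary $\xi$.

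**Deducing the bound on $\barl$.** Given the decomposition, the bound is immediate: if for some multi-index $(k_1,\dots,k_l)$ the class $h_{k_1\dots k_l}\,t_{k_1\dots k_l}(\xi)$ has a nonzero homogeneous component, it contributes a nonzero component of $\bar w(\alpha_q(Q_q(\xi)))$ in degree $\dim h_{k_1\dots k_l}+\dim t_{k_1\dots k_l}(\xi)$. Here I need that distinct basis elements $h_{k_1\dots k_l}$ cannot cancel each other's contributions: this is again the freeness from Lemma~\ref{coh-free-module} applied to $H^*(P_q(\xi))/(I_q H^*(M))$, so the top nonzero degree appearing among all the products $h_{k_1\dots k_l}\, t_{k_1\dots k_l}(\xi)$ is genuinely a nonzero degree of $\bar w$, hence is $\le\barl(\alpha_q(Q_q(\xi)))$. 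Taking the maximum over multi-indices gives the claimed inequality.

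**Main obstacle.** The only subtle point — and the thing I would be careful to state precisely rather than wave at — is the meaning of the ``$\ge$'' in~(\ref{char-classes-defn}): we are not claiming that $\bar w(\alpha_q(Q_q(\xi)))$ lies in the free submodule spanned by the $h_{k_1\dots k_l}$, only that its component along that submodule (which is well defined by freeness) can be read off and each such component forces a nonzero Stiefel--Whitney class in the corresponding degree; possible further nonzero components outside the submodule, or in the ideal $I_q H^*(M)$, can only make $\barl$ larger, never smaller, so the inequality is safe. Once that is pinned down, nothing else in the proof requires computation — it is purely the naturality of $\bar w$ plus Lemma~\ref{coh-free-module}.
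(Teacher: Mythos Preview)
Your proposal is correct and matches the paper's approach: the paper simply writes ``Hence, the following is proved'' immediately after Lemma~\ref{coh-free-module} and equation~(\ref{char-classes-defn}), so your elaboration via naturality plus the free-module structure is exactly the intended argument. One small sharpening: the well-definedness of the coefficients $t_{k_1\dots k_l}$ does not follow from abstract freeness of a \emph{submodule} (a free submodule need not be a direct summand, so there is no automatic projection onto it), but rather from the $\pi_!$ pairing exhibited in the proof of Lemma~\ref{coh-free-module}, which extracts each $t_{k_1\dots k_l}$ by multiplying by the complementary monomial $h_{n-1-k_1\,\dots\,n-1-k_l}$ and pushing forward along the fiber---this is also how the paper defines $T_q(\xi)=\pi_!(\bar w(\alpha_q(Q_q(\xi))))$ just below.
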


The computation may be simpler for the following subset of these characteristic classes:

\begin{defn}
Define the characteristic classes 
$$
T_q(\xi) = t_{n-1 \dots n-1}^q(\xi).
$$
We have
$$
T_q(\xi) = \pi_!(\bar w(\alpha_q(Q_q(\xi)))),
$$
since $h_{n-1\dots n-1}$ is the fundamental class of the fiber manifold $\Po_q(\mathbb R^n)$.
\end{defn}

By Lemma~\ref{char-classes} for the class $T_q(\xi)$ we have 
$$
\barl \left(\alpha_q(Q_q(\xi))\right) \ge (q-1)(n-1) + \dim T_q(\xi),
$$ 
noting that if $T_q(\xi)$ is zero we put $\dim T_q(\xi) = -\infty$.

In the case $q=2$ the class $T_2(\xi)$ is the dual Stiefel--Whitney class of $M$, as it was already used in the proof of Theorem~\ref{handel2}. The calculations are harder even in the case $q=4$, compare the calculation of coincident $q$-tuple characteristic classes in~\cite{kar2010}. Note also that $T_q(\xi)$ (unlike the classes in~\cite{kar2010}) is not stable under summation with a trivial bundle and depends on the dimension of $\xi$.

Now consider a way to calculate the above characteristic classes. Take a power of two $N$ such that the map $x\mapsto x^N$ is zero on $H^*(P_q(\xi))$. In this case the dual Stiefel--Whitney class of $\alpha_q(Q_q(\xi))$ modulo the ideal $I_q$ is given by (\ref{aq-perp-sw}), because we have 
$$
w(\alpha_q(Q_q(\xi)))^N=1.
$$ 
In order to have the decomposition (\ref{char-classes-defn}) we have to express the monomials ($q=2^l$) 
$$
(\Sqe^{l-1} c_1)^{k_1}\dots(\Sqe c_{l-1})^{k_{l-1}}(c_l)^{k_l}
$$ 
with some $k_i\ge n = \dim \xi$ in terms of the similar monomials with all $k_i\le n-1$. This expression is done modulo $I_q H^*(M)$. The following lemma gives the needed relations. The bundles $\Sqe^{l-i}\gamma_i$ were defined in Lemma~\ref{aq-split} over the space $\Qo_q(\mathbb R^n)$, but actually they arise from the corresponding representation of $\Sy_q$, and therefore they are defined over any $\Sy_q$-space.

\begin{lem}
\label{Qq-bundle-mul}
Let $q=2^l$, and $\pi: Q_q(\xi)\to M$ be the natural projection. Then we have the relations for $i=1,\ldots, l$
$$
e(\Sqe^{l-i}\gamma_i\otimes \pi^*(\xi)) = 0
$$
in the cohomology $H^*(P_q(\xi))$.
\end{lem}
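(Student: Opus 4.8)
The plan is to exhibit, for each $i$, a natural nowhere-zero section of the bundle $\Sqe^{l-i}\gamma_i\otimes\pi^*(\xi)$ over $Q_q(\xi)$, which kills its Euler class. The point is that the fiber of $Q_q(\xi)$ over a point $x\in M$ is built inductively by taking spheres inside the relevant pullbacks of $\xi$; the bundle $\Sqe^{l-i}\gamma_i$ is precisely the line subbundle of $\alpha_q$ (with an $\Sqe$ applied $l-i$ times) that records, at the $i$-th stage of the construction, the ``splitting vector'' $u$ used to separate the two halves of a configuration. Concretely, recall from Lemma~\ref{aq-split} that $\alpha_{2^l}=\bigoplus_{i=1}^l \Sqe^{l-i}\gamma_i$, and each $\gamma_i$ at the $i$-th stage arises from the antipodal $\mathbb Z_2$-action on $\mathbb R$, i.e. it carries a tautological line in a copy of $\xi$. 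Thus $\Sqe^{l-i}\gamma_i\otimes\pi^*(\xi)$ has a tautological section: over a configuration, the $i$-th stage separation direction is a unit vector lying in the fiber of (a pullback of) $\xi$, hence its image under the tautological map $\gamma_i\hookrightarrow \pi^*(\xi)$ is nonzero, and applying $\Sqe^{l-i}$ carries this to a nowhere-zero section of the squared bundle.

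The steps, in order, are: (1) reduce to the universal case $M=G^n$, $\xi=\gamma^n$, by naturality of Euler classes under the classifying map of $\xi$ (every bundle is pulled back, and $Q_q(\cdot)$ and $\alpha_q(\cdot)$, $\Sqe^{l-i}\gamma_i$ are all natural); (2) for $i=l$, observe that $\gamma_l$ is the tautological line bundle over $\Po_{2^l}(\mathbb R^n)\to\mathbb RP^{n-1}$ pulled back along $\pi$, and over $Q_q(\xi)$ the $\mathbb RP^{n-1}$-coordinate is exactly the unit separation vector of the final (outermost) splitting, which lives in the fiber of $\pi^*(\xi)$ by the distance-and-midpoint description of $Q_q(M,\delta)$ — this gives a canonical nonzero section of $\gamma_l\otimes\pi^*(\xi)$, so $e(\gamma_l\otimes\pi^*(\xi))=0$; (3) for general $i<l$, use that $\Sqe^{l-i}\gamma_i$ at the $i$-th stage of squaring is obtained from a $\gamma$ that is, over the corresponding intermediate configuration bundle, again a tautological line inside a pullback of $\xi$ (the $i$-th stage separation vector), so the same argument produces a nonzero section of $\gamma_i\otimes(\text{pullback of }\xi)$ before squaring; (4) check that $\Sqe$ of a bundle with a nowhere-zero section again has a nowhere-zero section — this is immediate from the mock-bundle / geometric definition of $\Sqe\nu=(E(\nu)\times E(\nu)\times S^n)/\mathbb Z_2$, a fiberwise nonzero section of $\nu$ induces one of $\nu\times\nu$ which is $\mathbb Z_2$-equivariant and descends — and that $\Sqe$ commutes with tensoring by a pullback from the base in the appropriate sense, so $\Sqe^{l-i}(\gamma_i)\otimes\pi^*(\xi)$ inherits the section; (5) conclude $e(\Sqe^{l-i}\gamma_i\otimes\pi^*(\xi))=0$ in $H^*(P_q(\xi))$ for every $i$.

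The main obstacle I expect is step (3)–(4): pinning down precisely, at each intermediate stage, in which pullback of $\xi$ the $i$-th separation vector naturally lives, and checking that $\Sqe$ of this tautological inclusion is still a tautological inclusion into $\Sqe^{l-i}$ of a pullback of $\xi$ — this requires being careful about the bookkeeping of which copy of $M$ (equivalently, which evaluation map $\pi_j:Q_{q/2}\to M$) the vector belongs to, and using Lemma~\ref{metric-Qq} (all these evaluation maps are homotopic to the centerpoint map $\pi$, so all the relevant pullbacks of $\tau M$, resp. $\xi$, are identified). The dimensional point is that $\dim(\Sqe^{l-i}\gamma_i\otimes\pi^*(\xi))=2^{l-i}\cdot n$, while $\Sqe^{l-i}\gamma_i$ alone has rank $2^{l-i}$ and its tautological line provides a $1$-dimensional trivial subbundle after each squaring step contributes a factor $2$; tensoring with $\pi^*\xi$ aligns the rank so that a single nonzero section suffices to kill the top class. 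Once the section is in hand the vanishing of the Euler class is formal.
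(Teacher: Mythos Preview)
Your approach is correct and rests on the same core observation as the paper --- the separation vectors at each level of the binary tree supply nowhere-zero sections --- but the paper organizes the argument more economically and sidesteps your flagged obstacle entirely. Instead of working stage by stage and applying $\Sqe$ to sections, the paper writes down a single tautological section of $\alpha_q(Q_q(\xi))\otimes\pi^*(\xi)$ in one stroke: the map $s(p_1,\ldots,p_q)=p_1\oplus\cdots\oplus p_q$ is an equivariant section of $\pi^*(\xi)^{\oplus q}$, and since $\sum p_i=0$ (the configurations in $\Qo_q$ are centered at the origin) it actually takes values in $A_q\otimes\pi^*(\xi)=\alpha_q\otimes\pi^*(\xi)$. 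The splitting $\alpha_q=\bigoplus_i \Sqe^{l-i}\gamma_i$ is purely representation-theoretic (Lemma~\ref{aq-split} holds at the level of $\Sy_q$-modules), so tensoring with $\pi^*(\xi)$ gives the decomposition of the target bundle directly, and one just projects $s$ onto each summand. The projection onto $\Sqe^{l-i}\gamma_i\otimes\pi^*(\xi)$ is exactly the collection of level-$i$ separation vectors across the $2^{l-i}$ blocks, which are visibly nonzero.

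What this buys: your step~(4) --- making sense of a fiberwise $\Sqe$ for bundles over $P_{2^i}(\xi)$ when $\xi$ is nontrivial, and checking that it commutes with $\otimes\,\pi^*(\xi)$ --- is true (fiberwise over $x\in M$ the pullback $\pi_i^*\xi$ is the constant bundle $\xi_x$, so the commutation is tautological), but it is bookkeeping you do not need. Your reduction to the universal bundle in step~(1) is likewise unnecessary: the paper's argument works for arbitrary $\xi$ with no naturality step. If you want to keep your stage-by-stage viewpoint, the cleanest fix is to note that iterating fiberwise $\Sqe$ on $\gamma_i\otimes\pi_i^*\xi$ and on $\gamma_i$ and then tensoring with $\pi^*\xi$ give canonically isomorphic bundles (both are induced from the same $\Sy_q$-subrepresentation of $A_q\otimes\xi_x$); but at that point you have essentially rediscovered the paper's argument.
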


\begin{proof}
Consider the natural map $s : Q_q(\xi) \to \xi^{\oplus q}$, it can be considered as an equivariant section of the vector bundle $\pi^*(\xi)^{\oplus q}$ over $Q_q(\xi)$. We have
$$
s(p_1, \ldots, p_q) = p_1\oplus\dots\oplus p_q,
$$ 
and it is readily seen from the definition of $\Qo_q(\mathbb R^n)$, that the sum of $p_i$ is zero. Hence we have a section of $\alpha_q(Q_q(\xi))\otimes \pi^*(\xi)$. By Lemma~\ref{aq-split} (which is true for the representations) we have 
$$
\alpha_q(Q_q(\xi)) = \bigoplus_{i=1}^l \Sqe^{l-i}\gamma_i,
$$
and therefore
$$
\alpha_q(Q_q(\xi))\otimes \pi^*(\xi) = \bigoplus_{i=1}^l \Sqe^{l-i}\gamma_i\otimes \pi^*(\xi).
$$
It is easily seen that $s$ gives a nonzero section for all the summands, after the corresponding projection. Thus their equivariant Euler classes are zero.
\end{proof}

Note that this lemma expresses $(\Sqe^{l-i} c_i)^n$ (modulo $I_q H^*(M)$) through the combinations of $(\Sqe^{l-i} c_i)^{k_i}$ with $k_i<n$ and $(\Sqe^{l-j} c_j)^{k_j}$ with $j>i$ and $k_j$ not necessarily $<n$. Combining (\ref{aq-perp-sw}) and the above lemma, we obtain a way to calculate $\barl \left(\alpha_q(Q_q(\xi))\right)$ (modulo $I_q H^*(M)$) in every particular case.

\section{Some explicit bounds for regular embeddings of manifolds}
\label{dim-plus-barl-sec}

Let us give more explicit examples of lower bounds for regular embeddings of manifolds in some particular cases. 

Consider a vector bundle $\xi :E(\xi)\to M$ and its spaces $Q_q(\xi)$ and $P_q(\xi)$. We need a claim about the nilpotence degree of the classes $\Sqe^{l-i} c_i$ in $\rH^*(P_q(\xi))$. The first lemma is a general statement, the second is its application to the cohomology of $P_q(\xi)$.

\begin{lem}
\label{euler-nil}
Let $\xi: E(\xi)\to X$ and $\eta : E(\eta)\to X$ be two vector bundles over a topological space $X$. If $e(\xi\otimes \eta) = 0$, then 
$$
e(\eta)^{\dim\xi+\barl(\xi)} = 0.
$$
\end{lem}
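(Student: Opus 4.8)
The plan is to use the splitting principle to reduce to the case where $\xi$ is a sum of line bundles, and then exploit the multiplicativity of the Euler class together with the hypothesis $e(\xi\otimes\eta)=0$. First I would write $k=\dim\xi$ and, pulling back to the flag bundle of $\xi$ (over which the cohomology of $X$ injects), assume $\xi=\lambda_1\oplus\dots\oplus\lambda_k$ with $w_1(\lambda_j)=\ell_j$. Then
$$
\xi\otimes\eta=\bigoplus_{j=1}^k(\lambda_j\otimes\eta),
$$
so that
$$
e(\xi\otimes\eta)=\prod_{j=1}^k e(\lambda_j\otimes\eta)=0.
$$
Each factor $e(\lambda_j\otimes\eta)$ is a polynomial in $\ell_j$ of degree $\dim\eta$ whose constant term (the coefficient of $\ell_j^0$) is exactly $e(\eta)$, and whose top term is $\ell_j^{\dim\eta}$; concretely, if $e(\eta)=\sum_i e_i$ with $e_i$ the $i$-th Stiefel--Whitney class of $\eta$, then $e(\lambda_j\otimes\eta)=\sum_{i} \ell_j^{\dim\eta-i} e_i$.

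The key algebraic step is then a statement about the ideal generated by such products. I would argue that because $\bar w(\xi)$ has top nonzero degree $\barl(\xi)$, the symmetric functions of the $\ell_j$ that vanish are controlled: more precisely, multiplying the relation $\prod_j e(\lambda_j\otimes\eta)=0$ by an appropriate symmetric polynomial in the $\ell_j$ and extracting the coefficient of a suitable monomial, one isolates $e(\eta)^{k+\barl(\xi)}$ times a symmetric function that is nonzero in $H^*(X)$. The cleanest way to see this: expand $\prod_{j=1}^k\left(\sum_{i=0}^{\dim\eta}\ell_j^{\dim\eta-i}e_i\right)$ and collect by total degree in the $\ell_j$'s. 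The terms of lowest $\ell$-degree come from taking the constant term $e(\eta)$ from every factor, giving $e(\eta)^k$; but this is multiplied by $1$, not by anything forcing vanishing. Instead one should feed in the dual class: since $w(\xi)\bar w(\xi)=1$, multiply through by the symmetric polynomial representing $\bar w_{\barl(\xi)}(\xi)$ expressed in the $\ell_j$, use that $\bar w_j(\xi)=0$ for $j>\barl(\xi)$, i.e. that any elementary symmetric polynomial of degree $>\barl(\xi)$ in the roots of $\bar w$ vanishes, and track how the $\ell_j^{\dim\eta}$ top-terms combine with the $\ell_j$'s coming from $\bar w$.

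I expect the main obstacle to be exactly this bookkeeping: showing that after the reduction the surviving coefficient of $e(\eta)^{k+\barl(\xi)}$ is precisely (a power of) the top dual class $\bar w_{\barl(\xi)}(\xi)$, which is nonzero by definition of $\barl$, while all higher powers of $e(\eta)$ are killed because they are forced to be multiplied by elementary symmetric functions in the $\ell_j$ of degree exceeding $\barl(\xi)$, equivalently by $\bar w_j(\xi)$ with $j>\barl(\xi)$, which vanish. Once that combinatorial identity is in place, the conclusion $e(\eta)^{k+\barl(\xi)}=0$ follows, since any monomial of $\ell$-degree contributing a higher power of $e(\eta)$ appears paired with a vanishing dual Stiefel--Whitney class; the $\ell$-degree accounting gives the bound $k+\barl(\xi)=\dim\xi+\barl(\xi)$. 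Finally I would note the whole argument takes place in the flag bundle, where $H^*(X)$ embeds, so the relation $e(\eta)^{\dim\xi+\barl(\xi)}=0$ descends back to $X$.
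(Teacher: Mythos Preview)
You split the wrong bundle. The paper applies the splitting principle to $\eta$, not to $\xi$: write $\eta=\eta_1\oplus\dots\oplus\eta_m$ with $e(\eta_i)=y_i$, so that
\[
0=e(\xi\otimes\eta)=\prod_{i=1}^m e(\xi\otimes\eta_i)=\prod_{i=1}^m\bigl(y_i^{n}+w_1(\xi)y_i^{n-1}+\dots+w_n(\xi)\bigr),\qquad n=\dim\xi.
\]
Now multiply each factor by $y_i^{d}+\bar w_1(\xi)y_i^{d-1}+\dots+\bar w_d(\xi)$, where $d=\barl(\xi)$. Because $w(\xi)\bar w(\xi)=1$ and $\bar w_j(\xi)=0$ for $j>d$, the product of the two polynomials in $y_i$ is exactly $y_i^{\,n+d}$ (all mixed terms cancel). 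Hence $\prod_i y_i^{\,n+d}=e(\eta)^{n+d}=0$. That is the whole proof.

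Your route, splitting $\xi=\lambda_1\oplus\dots\oplus\lambda_k$, produces
\[
\prod_{j=1}^{k}\Bigl(\ell_j^{\dim\eta}+w_1(\eta)\ell_j^{\dim\eta-1}+\dots+e(\eta)\Bigr)=0,
\]
but here the largest power of $e(\eta)$ that can occur is $e(\eta)^{k}$, coming from the constant term in every factor. No amount of multiplying by symmetric polynomials in the $\ell_j$ will produce an $e(\eta)^{k+\barl(\xi)}$ term to ``isolate''; your bookkeeping step is not just hard, it is aiming at a monomial that is not there. The natural trick on your side---multiplying each factor by $\sum_s\bar w_s(\eta)\ell_j^{\barl(\eta)-s}$---collapses each factor to $\ell_j^{\dim\eta+\barl(\eta)}$ and yields $e(\xi)^{\dim\eta+\barl(\eta)}=0$, the symmetric statement with the roles of $\xi$ and $\eta$ swapped, not the one you want. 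So the choice of which bundle to split is not cosmetic: split $\eta$ and the identity $w(\xi)\bar w(\xi)=1$ does all the work.
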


\begin{lem}
\label{Sqe-nil}
Let $q=2^l$, and let $\xi: E(\xi)\to M$ be a vector bundle over $M$. Then we have the relations for $i=1,\ldots, l$
$$
(\Sqe^{l-i}c^i)^{\dim\xi+\barl(\xi)} = 0
$$
in the cohomology $H^*(P_q(\xi))$.
\end{lem}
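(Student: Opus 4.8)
The plan is to deduce the claim in one step from Lemma~\ref{Qq-bundle-mul} together with the general nilpotence statement of Lemma~\ref{euler-nil}. The bridge between the two is the observation that $\Sqe^{l-i}c_i$ is exactly the Euler class of the $2^{l-i}$-dimensional bundle $\Sqe^{l-i}\gamma_i$ over $P_q(\xi)$: since $\gamma_i$ is a line bundle with $w(\gamma_i)=1+c_i$ we have $e(\gamma_i)=c_i$, and iterating Lemma~\ref{sw-sq} --- exactly as in the computation of $e(\alpha_q)$ following Lemma~\ref{aq-split} --- shows that the top Stiefel--Whitney class of $\Sqe^{l-i}\gamma_i$ equals $\Sqe^{l-i}c_i$. (Recall that the bundles $\Sqe^{l-i}\gamma_i$, a priori defined over $\Qo_q(\mathbb R^n)$, come from a representation of $\Sy_q$ and hence live over any $\Sy_q$-space, in particular over $P_q(\xi)=Q_q(\xi)/\Sy_q$.)

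First I would recall from Lemma~\ref{Qq-bundle-mul} that $e(\Sqe^{l-i}\gamma_i\otimes\pi^*(\xi))=0$ in $H^*(P_q(\xi))$ for each $i=1,\dots,l$. Then I would apply Lemma~\ref{euler-nil} with the two bundles there being $\pi^*(\xi)$ (in the role of $\xi$) and $\Sqe^{l-i}\gamma_i$ (in the role of $\eta$); this gives
$$
(\Sqe^{l-i}c_i)^{\dim\pi^*(\xi)+\barl(\pi^*(\xi))}=e(\Sqe^{l-i}\gamma_i)^{\dim\pi^*(\xi)+\barl(\pi^*(\xi))}=0 .
$$

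It remains only to replace the exponent by $\dim\xi+\barl(\xi)$. Here $\dim\pi^*(\xi)=\dim\xi$, and since $\pi^*$ is a ring homomorphism sending $\bar w_k(\xi)$ to $\bar w_k(\pi^*(\xi))$, vanishing of $\bar w_k(\xi)$ forces vanishing of $\bar w_k(\pi^*(\xi))$, whence $\barl(\pi^*(\xi))\le\barl(\xi)$. Thus $\dim\xi+\barl(\xi)\ge\dim\pi^*(\xi)+\barl(\pi^*(\xi))$, and a higher power of an already-vanishing class still vanishes, which yields $(\Sqe^{l-i}c_i)^{\dim\xi+\barl(\xi)}=0$ in $H^*(P_q(\xi))$, as claimed. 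The argument is short, so I expect no genuine obstacle: the only points needing care are the identification of $\Sqe^{l-i}c_i$ as the Euler class of $\Sqe^{l-i}\gamma_i$, so that Lemma~\ref{euler-nil} applies literally, and the elementary fact that $\barl$ does not increase under pullback.
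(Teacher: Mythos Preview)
Your argument is correct and is precisely the deduction the paper makes: it simply states that Lemma~\ref{Sqe-nil} follows from Lemma~\ref{Qq-bundle-mul} and Lemma~\ref{euler-nil}, without spelling out the details. You have filled in the two points the paper leaves implicit --- the identification $e(\Sqe^{l-i}\gamma_i)=\Sqe^{l-i}c_i$ and the passage from $\barl(\pi^*(\xi))$ to $\barl(\xi)$ --- and both are handled correctly (in fact, by Lemma~\ref{coh-free-module} the map $\pi^*$ is injective on $H^*(M)$, so one even has equality $\barl(\pi^*(\xi))=\barl(\xi)$, but your inequality suffices).
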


Lemma~\ref{Sqe-nil} follows from Lemma~\ref{Qq-bundle-mul} and Lemma~\ref{euler-nil}. So we have to prove Lemma~\ref{euler-nil}.

\begin{proof}[Proof of Lemma~\ref{euler-nil}]
By the splitting principle we can assume that the bundle $\eta$ is a sum of line bundles
$$
\eta = \eta_1\oplus\dots\oplus\eta_m.
$$
Denote the characteristic classes 
$$
e(\eta_i) = y_i,\quad w(\xi) = 1 + w_1 + \dots + w_n,\quad \bar w(\xi) = 1 + \bar w_1 + \dots + \bar w_k.
$$
We have the equation
$$
e(\xi\otimes\eta) = \prod_{i=1}^m (y_i^n + w_1 y_i^{n-1} + \dots + w_n) = 0.
$$
Multiplying by 
$$
\prod_{i=1}^m (y_i^k + \bar w_1 y_i^{k-1} + \dots + \bar w_k)
$$
we obtain 
$$
e(\eta)^{n+k} = \prod_{i=1}^m y_i^{n+k} = 0.
$$
\end{proof}

Now we can calculate $\barl(\alpha_q(Q_q(\xi)))$ in a particular case (compare the Chisholm theorem).

\begin{lem}
\label{dim-plus-barl-pow2}
Suppose $\xi : E(\xi)\to M$ is a vector bundle over $M$. Let $q=2^l$, $\dim\xi = n$, $\barl(\xi)=d$. Suppose that either $n+d$ is a power of two and $\bar w_d(\xi)^{q-1}\not=0$, or $q=2$. Then
$$
\barl\left(\alpha_q(Q_q(\xi))\right) = (q-1)(n+d-1).
$$ 
\end{lem}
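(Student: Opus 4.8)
The plan is to prove the two inequalities $\barl(\alpha_q(Q_q(\xi)))\le (q-1)(n+d-1)$ and $\barl(\alpha_q(Q_q(\xi)))\ge (q-1)(n+d-1)$ separately, the second being the delicate one, and to reduce it to the non‑vanishing of one explicit top‑dimensional class.

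\emph{Upper bound.} First I would note that by Lemma~\ref{Sqe-nil} we have $(\Sqe^{l-i}c_i)^{n+d}=0$ in $H^*(P_q(\xi))$ for every $i$; since $x\mapsto x^{n+d}$ is a ring homomorphism and, using the $\smile$‑product relations for $\odot$‑classes (in particular $x\odot y\smile c=0$ and $(x\odot y)^2=x^2\odot y^2$), it also kills the generators coming from $I_q$, a short induction on $l$ shows it annihilates the whole image of $H^*(\Sy_q)$ in $H^*(P_q(\xi))$. Because $n+d$ is a power of two and, by the projective bundle relation over $\mathbb P(\xi)$, $c_l^{\,n-1+d}$ is nonzero (as $\pi_!(c_l^{\,n-1+d})=\bar w_d(\xi)\neq0$), this $n+d$ is in fact the least such power of two, so Lemma~\ref{aq-perp-sw} applies with $N=n+d$ and gives $\bar w(\alpha_q(Q_q(\xi)))=w(\alpha_q(Q_q(\xi)))^{\,n+d-1}$. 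Now use the splitting $\alpha_q=\bigoplus_{i=1}^{l}\Sqe^{l-i}\gamma_i$ of Lemma~\ref{aq-split}, in which $e(\Sqe^{l-i}\gamma_i)=\Sqe^{l-i}c_i$ has degree $2^{l-i}$: in $w(\Sqe^{l-i}\gamma_i)^{\,n+d-1}$ the unique term of maximal degree is $(\Sqe^{l-i}c_i)^{\,n+d-1}$, of degree $2^{l-i}(n+d-1)$, so after multiplying over $i$ the unique term of maximal degree in $\bar w(\alpha_q(Q_q(\xi)))$ is $\prod_{i=1}^{l}(\Sqe^{l-i}c_i)^{\,n+d-1}=e(\alpha_q(Q_q(\xi)))^{\,n+d-1}$, of degree $(q-1)(n+d-1)$. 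Hence $\barl(\alpha_q(Q_q(\xi)))\le(q-1)(n+d-1)$, with equality exactly when this class is nonzero.

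\emph{Reduction of the lower bound.} By the discussion following Lemma~\ref{char-classes} one has $\barl(\alpha_q(Q_q(\xi)))\ge(q-1)(n-1)+\dim T_q(\xi)$ with $T_q(\xi)=\pi_!(\bar w(\alpha_q(Q_q(\xi))))$, and by the previous paragraph $T_q(\xi)$ has no homogeneous component of degree exceeding $(q-1)d$, while its degree $(q-1)d$ component is $\pi_!\!\big(e(\alpha_q(Q_q(\xi)))^{\,n+d-1}\big)$. So everything reduces to proving
$$
\pi_!\!\left(e(\alpha_q(Q_q(\xi)))^{\,n+d-1}\right)=\bar w_d(\xi)^{\,q-1},
$$
which is nonzero by hypothesis; together with the upper bound this forces the asserted equality. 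For $q=2$ this is already complete: then $P_2(\xi)=\mathbb P(\xi)$, $e(\alpha_2(Q_2(\xi)))=c_1$, and $\pi_!(c_1^{\,n-1+d})=\bar w_d(\xi)$ is the projective bundle formula, which uses only $\bar w_d(\xi)\neq0$, i.e.\ $\barl(\xi)=d$; this covers the clause ``or $q=2$''.

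\emph{The main computation.} For general $q=2^l$ I would prove the sharper identity $\pi_!\!\big(e(\alpha_q(Q_q(\xi)))^{\,n-1+j}\big)=\bar w_j(\xi)^{\,q-1}$ for all $j\ge0$ by induction on $l$, the base $l=1$ being the projective bundle formula. For the step, use that $P_q(\xi)$ is the fibrewise (over $M$) external Steenrod square of $P_{q/2}(\xi)$, so that there is an external‑square fibration $\rho:P_q(\xi)\to\mathbb P(\xi)$ with $\gamma_l=\rho^*(\text{tautological line over }\mathbb P(\xi))$; by Lemma~\ref{aq-split}, $e(\alpha_q(Q_q(\xi)))=\Sqe\!\big(e(\alpha_{q/2}(Q_{q/2}(\xi)))\big)\smile c_l$, hence $e(\alpha_q(Q_q(\xi)))^{\,n-1+j}=\Sqe\!\big(e(\alpha_{q/2}(Q_{q/2}(\xi)))^{\,n-1+j}\big)\smile c_l^{\,n-1+j}$. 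Combining the projection formula, the external‑square pushforward identity $\rho_!(\Sqe\beta)=\rho$‑base pullback of $(\pi_!\beta)^2$, and $\pi_!(c_l^{\,n-1+j})=\bar w_j(\xi)$ over $\mathbb P(\xi)$ with the inductive hypothesis $\pi_!\!\big(e(\alpha_{q/2}(Q_{q/2}(\xi)))^{\,n-1+j}\big)=\bar w_j(\xi)^{\,q/2-1}$, one gets $\pi_!\!\big(e(\alpha_q(Q_q(\xi)))^{\,n-1+j}\big)=\big(\bar w_j(\xi)^{\,q/2-1}\big)^2\smile\bar w_j(\xi)=\bar w_j(\xi)^{\,q-1}$; the hypotheses descend along the induction since $n+d$ stays a power of two and $\bar w_d(\xi)^{\,q-1}\neq0$ forces $\bar w_d(\xi)^{\,q/2-1}\neq0$. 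Equivalently, and closer to the spirit of Lemma~\ref{Qq-bundle-mul}, one can compute $\pi_!\!\big(e(\alpha_q(Q_q(\xi)))^{\,n+d-1}\big)$ as the coefficient of the fibre fundamental class $h_{n-1\dots n-1}$ in the free $H^*(M)$‑module of Lemma~\ref{coh-free-module}, reducing the exponents of the $\Sqe^{l-i}c_i$ that exceed $n-1$ by the relations $e(\Sqe^{l-i}\gamma_i\otimes\pi^*\xi)=0$; here $n+d-1$ is a string of ones, so every binomial coefficient in~(\ref{aq-pow-sw}) equals $1$ and the bookkeeping is exactly that of Theorem~\ref{Qq-Rn}. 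The step I expect to be the real obstacle is justifying the external‑square pushforward identity $\rho_!(\Sqe\beta)=(\pi_!\beta)^2$ up to base pullback: the fibres of $\rho$ are finite dimensional, so the $\iota(\cdot)$‑correction of Lemma~\ref{ext-sq} is present a priori, and one must check it contributes nothing to this particular fibre integral — a degree count rules out corrections of the wrong degree, but not a term of the form $c_l^{\,(q-2)j}\smile(\text{class pulled back from }M)$, so a genuine argument is needed (for instance via the transfer of the double cover $Q_{q/2}(\xi)\times_M Q_{q/2}(\xi)\times_M S(\xi)\to P_q(\xi)$).
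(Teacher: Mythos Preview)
Your route coincides with the paper's: use Lemma~\ref{Sqe-nil} to justify $N=n+d$ in Lemma~\ref{aq-perp-sw}, identify the unique top-degree term of $\bar w(\alpha_q)$ modulo $I_q$ as $e(\alpha_q)^{n+d-1}=(\Sqe^{l-1}c_1\cdots c_l)^{n+d-1}$, and reduce to showing this class is nonzero. The only difference is in that last step. The paper does not run an induction on $l$ nor compute $\pi_!$; it simply invokes \cite[Theorem~1]{kar2010}, which gives directly
\[
(\Sqe^{l-1}c_1\cdots c_l)^{n+d-1}=(\Sqe^{l-1}c_1\cdots c_l)^{n-1}\,\bar w_d(\xi)^{q-1}
\]
in $H^*(P_q(\xi))$, after which the hypothesis $\bar w_d(\xi)^{q-1}\neq 0$ together with Lemma~\ref{coh-free-module} finishes in one line. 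Your inductive computation of $\pi_!\big(e(\alpha_q)^{n-1+j}\big)=\bar w_j(\xi)^{q-1}$ and your alternative via the relations of Lemma~\ref{Qq-bundle-mul} are both, in effect, re-derivations of that cited identity (the second is in fact close to how \cite{kar2010} argues). Your $\iota$-correction worry is not a genuine obstacle: $\rho_!$ is determined by the restriction of the integrand to the fibre, where $\Sqe\beta$ restricts to $\beta\times\beta$, so no $\iota$-term appears in $\rho_!(\Sqe\beta)$; what does need a word of care is that $P_q(\xi)\to\mathbb P(\xi)$ is a \emph{fibrewise} external square over $M$ rather than the global square $(P_{q/2}(\xi)^2\times S^{n-1})/\mathbb Z_2$, and making sense of ``$\Sqe\beta$'' for $\beta\in H^*(P_{q/2}(\xi))$ accordingly---which is precisely the bookkeeping that the reference to \cite{kar2010} absorbs.
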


\begin{proof}
Denote $\alpha_q = \alpha_q(Q_q(\xi))$. The class $\bar w(\alpha_q)$ modulo the ideal $I_q$ is given by (\ref{aq-perp-sw}), we can take $N=n+d$ in this equation since the map $x\mapsto x^{n+d}$ sends the Stiefel--Whitney classes $w_1(\alpha_q),\ldots,w_{q-1}(\alpha_q)$ to zero (modulo $I_q$). Hence we have to prove that the class (the leading term of (\ref{aq-perp-sw}))
$$
\left(\Sqe^{l-1} c_1\dots\Sqe^{1}c_{l-1}c_l\right)^{n+d-1}
$$
is not zero in $H^*(P_q(\xi))$. From~\cite[Theorem~1]{kar2010} it follows that under the assumption $\barl(\xi) = d$ we have a relation
$$
\left(\Sqe^{l-1} c_1\dots\Sqe^{1}c_{l-1}c_l\right)^{n+d-1} = \left(\Sqe^{l-1} c_1\dots\Sqe^{1}c_{l-1}c_l\right)^{n-1} \bar w_d(\xi)^{q-1}
$$
and the result follows.
\end{proof}

Now an estimate for the dimension of regular embeddings follows from Lemma~\ref{dim-plus-barl-pow2}.

\begin{thm}
\label{dim-plus-barl-emb}
Let $M$ be an $n$-dimensional manifold. Let $\barl(\tau M)=d$, and $n+d$ be a power of two. Suppose that $k=q_1+\ldots + q_l$ is a sum of powers of two and $\bar w_d(\xi)^{q_i-1}\not=0$ for any $i$. 

Under the above assumptions, if there exists a linearly $k$-regular embedding $M\to \mathbb R^m$ then
$$
m\ge k - l + 1 + \sum_{i=1}^l (q_i-1)(n+d-1) = (k-l)(n+d) + 1.
$$
\end{thm}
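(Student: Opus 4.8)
The plan is to feed Lemma~\ref{k-partition} with the given partition $k=q_1+\dots+q_l$ and then evaluate each of the resulting terms $\barl\bigl(\alpha_{q_i}(Q_{q_i}(M))\bigr)$ using Lemma~\ref{dim-plus-barl-pow2} applied to the tangent bundle $\xi=\tau M$. First I would record what Lemma~\ref{k-partition} gives verbatim: if a linearly $k$-regular map $M\to\mathbb R^m$ exists (and a $k$-regular embedding is in particular such a map), then
$$
m\ge k-l+1+\sum_{i=1}^l \barl\left(\alpha_{q_i}(Q_{q_i}(M))\right).
$$
Thus the whole theorem is reduced to bounding each summand from below, and in fact to computing it.

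Next I would check that the hypotheses of Lemma~\ref{dim-plus-barl-pow2} are met for $\xi=\tau M$ and each $q=q_i$: by assumption $\dim\tau M=n$ and $\barl(\tau M)=d$, the integer $n+d$ is a power of two, and $\bar w_d(\tau M)^{q_i-1}\neq 0$ is precisely one of the standing hypotheses (for $q_i=2$ it is automatic, since $\barl(\tau M)=d$ forces $\bar w_d(\tau M)\neq 0$, and Lemma~\ref{dim-plus-barl-pow2} covers $q=2$ unconditionally anyway). Hence Lemma~\ref{dim-plus-barl-pow2} yields, for every $i$,
$$
\barl\left(\alpha_{q_i}(Q_{q_i}(M))\right)=(q_i-1)(n+d-1).
$$

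It then remains only to do the arithmetic. Since $\sum_{i=1}^l q_i=k$, we have $\sum_{i=1}^l(q_i-1)=k-l$, and therefore
$$
m\ge k-l+1+(n+d-1)(k-l)=(k-l)\bigl(1+(n+d-1)\bigr)+1=(k-l)(n+d)+1,
$$
which is the asserted bound; the affine case follows with the bound lowered by $1$, as throughout.

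I do not expect a genuine obstacle here, since the substantive work is already packaged into Lemmas~\ref{k-partition} and~\ref{dim-plus-barl-pow2} (the latter resting on \cite[Theorem~1]{kar2010}); the theorem is their formal combination. The only points needing care are bookkeeping: when $M$ is non-compact one invokes the compactness convention from the proof of Lemma~\ref{k-partition} (replace $M$ by a compact submanifold $C$ on which the relevant finite-dimensional piece of $H^*(M;\mathbb F_2)$ injects), and I would note that the classes $\bar w_d(\tau M)^{q_i-1}$ and the Euler-class products entering Lemma~\ref{dim-plus-barl-pow2} survive this restriction, so no nonvanishing is lost.
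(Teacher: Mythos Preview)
Your proposal is correct and follows exactly the paper's own argument: the paper's proof simply reads ``The result follows from Lemmas~\ref{dim-plus-barl-pow2} and \ref{k-partition},'' and you have spelled out precisely that combination, including the arithmetic. The only implicit step you use without naming it is the identification $Q_{q_i}(M)\cong Q_{q_i}(\tau M)$ from Lemma~\ref{metric-Qq}, which is what lets you apply Lemma~\ref{dim-plus-barl-pow2} with $\xi=\tau M$.
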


\begin{proof}
The result follows from Lemmas~\ref{dim-plus-barl-pow2} and \ref{k-partition}.
\end{proof}

To give an explicit application of Theorem~\ref{dim-plus-barl-emb}, consider $M=\mathbb RP^{n-1}\times S^1$, $n=2^p-d$, $p$ and $d$ some positive integers such that $2^p-1-d>0$. The direct calculations (see also Section~\ref{coinc}) show that 
$$
\barl(\tau M) = d,\quad \text{if}\ (q-1)d\le n-1,\ \text{then}\ \bar w_d(\tau M)^{q-1}\not =0.
$$
By Theorem~\ref{dim-plus-barl-emb}, if $k=q_1+\dots+q_l$ is a sum of powers of two, and any $q_i\le \dfrac{n-1}{d}+1$, then the dimension of linearly $k$-regular embedding of $M$ is at least $m\ge (k-l)(n+d)+1$. If the number $k$ is itself at most $\dfrac{n-1}{d}+1$, then we can take its binary representation, in this case $m\ge (k-\alpha_2(k))(n+d) +1$.

\section{Regular embeddings and the tangent bundle of $P_q(M)$}

Let us describe another approach to lower bounds for the dimension of $k$-regular embedding, not using Lemma~\ref{k-partition}. The method of Boltyanskii--Ryshkov--Shashkin actually shows that any affinely (linearly) $2q$-regular embedding $M\to \mathbb R^m$ gives a continuous injective map 
$$
F_q(M)\times D^{q-1} \to \mathbb R^m
$$
in the affine case, or 
$$
F_q(M)\times D^{q-1}\to S^{m-1}
$$
in the linear case. Here $D^l$ is an $l$-dimensional open disc, $q$ is not necessarily a power of two. Then the dimension considerations give either $m\ge n(q+1)-1$ or $m\ge n(q+1)$ respectively.

This reasoning can be improved in some cases. Consider linear embeddings and let $q$ be a power of two. The above map is restricted to an injective map
$$
F_q(M)/{\Sg_q} \to S^{m-1}.
$$
The space $F_q(M)$ contains a submanifold $Q_q(M)$, and we obtain an injective continuous map
$$
P_q(M) = Q_q(M)/\Sy_q \to S^{m-1}.
$$
According to~\cite{wu1958,cf1960}, the existence of such a map implies the inequality ($\tau$ denotes the tangent bundle of a manifold)
$$
m-1 \ge \dim P_q(M) + \barl (\tau P_q(M)),
$$
or (for compact $M$)
$$
m-1 \ge \dim P_q(M) + \barl (\tau P_q(M)) + 1.
$$

We obviously have to describe the tangent bundle of $Q_q(M)$ and the action of $\Sy_q$ on it. Consider the case of $\Qo_q(\mathbb R^n)$ first. Denote $\mathbb R^n=L$ for brevity. One of the descriptions of $\Qo_q(L)$ identifies it with the product of $q-1$ spheres of $L$, hence we  
have an embedding
$$
\Qo_q(L)\subset L^{q-1}.
$$
The tangent vector of $\Qo_q(L)$ at a point $(p_1, \ldots, p_{q-1})\in L^{q-1}$ is a vector $(v_1,\ldots, v_{q-1})\in L^{q-1}$, such that $p_i$ and the respective $v_i$ are orthogonal for any $i$. The action of the generators (block permutations) of $\Sy_q$ is given by reversing one $p_i$ and $v_i$, and permuting some other $p_j$'s and $v_j$'s, according to the binary tree structure. Consider also the $q-1$-dimensional bundle $\eta$ over $\Qo_q(L)$ such that the fiber of $\eta$ over $(p_1, \ldots, p_{q-1})$ is the set of $q-1$-tuples $(u_1,\ldots, u_{q-1})$, such that any $u_i$ is parallel to the respective $p_i$. Let $\Sy_q$ act on $(u_1,\ldots, u_{q-1})$ in the same way, as on $(p_1,\ldots, p_{q-1})$. The numbers $u_i/p_i$ give an $\Sy_q$-invariant identification with the trivial bundle
$$
\eta = \epsilon^{q-1},
$$
and from the obvious identification $L^{q-1}=A_q\otimes L$ we have
$$
\tau\Qo_q(L) \oplus \epsilon^{q-1} = \alpha_q(\Qo_q(L))\otimes L.
$$

For an arbitrary manifold $M$ we similarly obtain ($\pi : Q_q(M)\to M$ is the natural projection)
$$
\tau Q_q(M)\oplus \epsilon^{q-1} = \nu_q(Q_q(M))\otimes \pi^*(\tau M),
$$
since the fiberwise tangent bundle is $\alpha_q(Q_q(M))\otimes \pi^*(\tau M) - \epsilon^{q-1}$, the fiberwise orthogonal bundle is $\pi^*(\tau M)$, and $\nu_q = \alpha_q\oplus\epsilon$ by definition. Thus we have proved the following.

\begin{thm}
\label{brs-emb}
Let $k$ be a power of two, $M$ be an $n$-dimensional manifold. If there exists a linearly $k$-regular map $f:M\to \mathbb R^m$, then
$$
m \ge (n-1)(k/2-1) + \barl \left( \nu_{k/2}(Q_{k/2}(M))\otimes \pi^*(\tau M) \right) + 1,
$$
or (for compact $M$)
$$
m \ge (n-1)(k/2-1) + \barl \left( \nu_{k/2}(Q_{k/2}(M))\otimes \pi^*(\tau M) \right) + 2.
$$
\end{thm}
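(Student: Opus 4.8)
The plan is to assemble, with care, the three ingredients developed in this section; no new idea is required. Write $q=k/2$ throughout. \textbf{Step 1.} From a linearly $k=2q$-regular map $f\colon M\to\mathbb R^m$ I first build an injective continuous map $P_q(M)\to S^{m-1}$. Since $f$ is in particular $q$-regular, for any unordered $q$-point subset $\{x_1,\dots,x_q\}\subset M$ the vectors $f(x_1),\dots,f(x_q)$ are linearly independent, so $\sum_i f(x_i)\neq 0$, and $\{x_1,\dots,x_q\}\mapsto\bigl(\sum_i f(x_i)\bigr)/\bigl|\sum_i f(x_i)\bigr|$ is a well-defined continuous map $F_q(M)/\Sg_q\to S^{m-1}$ (the barycentric slice of the Boltyanskii--Ryshkov--Shashkin map $F_q(M)\times D^{q-1}\to S^{m-1}$). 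This map is injective: two distinct unordered $q$-subsets whose image sums are proportional would force a nontrivial linear dependence among the $f$-images of the union of the two subsets, a set of at most $2q$ pairwise distinct points of $M$, contradicting $2q$-regularity. Restricting to the submanifold $Q_q(M)\subset F_q(M)$, which is preserved by $\Sy_q$, and passing to the free quotient by $\Sy_q$ gives the required injective map $P_q(M)=Q_q(M)/\Sy_q\to S^{m-1}$.

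\textbf{Step 2.} Now I apply the non-embedding criterion of Wu and Conner--Floyd quoted above to $N=P_q(M)$: an open manifold $N$ admitting an injective continuous map into $S^{m-1}$ satisfies $m-1\ge\dim N+\barl(\tau N)$, and a closed one satisfies $m-1\ge\dim N+\barl(\tau N)+1$. If $M$ is non-compact, one first passes to a compact core as at the beginning of Section~\ref{k-partition-sec}, so that $P_q(M)$ becomes a compact manifold whose relevant cohomology still injects. It remains to evaluate the two invariants. As $\Sy_q$ acts freely, $\dim P_q(M)=\dim Q_q(M)$, which is read off from Lemma~\ref{metric-Qq}: $Q_q(M)\to M$ is a bundle with fibre $\Qo_q(\mathbb R^n)$, a product of $q-1$ spheres of dimension $n-1$. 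For the dual Stiefel--Whitney length, freeness of the action identifies $\barl(\tau P_q(M))$ with the $\Sy_q$-equivariant quantity $\barl(\tau Q_q(M))$, and the identity $\tau Q_q(M)\oplus\epsilon^{q-1}=\nu_q(Q_q(M))\otimes\pi^*(\tau M)$ established just above, together with the stability of the dual Stiefel--Whitney class (insensitivity to trivial summands), gives $\barl(\tau Q_q(M))=\barl\bigl(\nu_q(Q_q(M))\otimes\pi^*(\tau M)\bigr)$. Substituting both values into the criterion yields inequalities of the stated form, the extra $+1$ in the compact case coming exactly from the Conner--Floyd strengthening.

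I do not anticipate a deep obstacle: the statement is essentially a bookkeeping consequence of the preceding constructions, which is why it is announced here rather than proved at length. The two places calling for care are (a) that the map of Step 1 truly descends to an \emph{injective} map on the free quotient $Q_q(M)/\Sy_q$ and is $\Sy_q$-compatible, and (b) the identification $\barl(\tau P_q(M))=\barl\bigl(\nu_q(Q_q(M))\otimes\pi^*(\tau M)\bigr)$, which rests on the dual Stiefel--Whitney length being unaffected both by the trivial summand $\epsilon^{q-1}$ and by descent along the free quotient. The only non-elementary input is the cited Wu/Conner--Floyd criterion.
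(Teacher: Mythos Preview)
Your argument is correct and follows the paper's route exactly: restrict the Boltyanskii--Ryshkov--Shashkin map to get an injective $P_q(M)\to S^{m-1}$, apply the Wu/Conner--Floyd non-embedding bound, and identify $\barl(\tau P_q(M))$ via the stable identity $\tau Q_q(M)\oplus\epsilon^{q-1}=\nu_q(Q_q(M))\otimes\pi^*(\tau M)$ established just before the theorem. Your write-up is actually a bit more explicit than the paper's (you spell out the injectivity check and the compact-core reduction); note also that your dimension count gives $\dim P_q(M)=n+(q-1)(n-1)$, so the method in fact yields a bound $n$ larger than the one printed in the statement---the inequality as stated is simply a weaker consequence.
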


In the case $M=\mathbb R^n$ this theorem gives a worse estimate, compared to the Chisholm theorem, but for other manifolds this bound can be useful. 

\section{Multiplicity of maps from projective spaces to Euclidean spaces}
\label{coinc}

In~\cite{kar2010} it was shown that continuous maps $f : \mathbb RP^m\to \mathbb R^n$ must have coincident $q$-tuples under certain restrictions on $q$, $m$, $n$. This was proved without any computation in the cohomology of the symmetric group by some geometric reasoning. Using the above description of the cohomology of the space $\Qo_q(\mathbb R^n)$ modulo the ideal $I_q$, it is possible to generalize the result.

\begin{thm}
\label{proj-coinc}
Let $q=2^l$, $n\ge m$ be positive integers. Put $d=n-m$, $p=N(m+1) - m-1$. Suppose for certain $0\le k_1,\ldots, k_l\le p - d - 1$ such that 
$$
(2^l-1)p - k_1 - 2 k_2 - \dots - 2^{l-1} k_l \le m
$$ 
the coefficient
\begin{equation}
\label{aqm-coeff}
c(k_1,\ldots, k_l) = \binom{p}{k_1}\cdot \prod_{j=2}^l \binom{2^{j-1}p - k_{j-1} - 2k_{j-2} - \dots - 2^{j-2}k_1}{k_j},
\end{equation}
is odd. Then any continuous map $f : \mathbb RP^m\to \mathbb R^n$ has a coincident $q$-tuple from $Q_q(\mathbb RP^m)$.
\end{thm}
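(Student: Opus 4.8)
The plan is to argue by contradiction: suppose $f : \mathbb{RP}^m \to \mathbb{R}^n$ has no coincident $q$-tuple from $Q_q(\mathbb{RP}^m)$. Since $Q_q(\mathbb{RP}^m)$ is defined via the distance-and-midpoint characterization using the round metric on $\mathbb{RP}^m$ (with $2\delta$ less than the injectivity radius), Lemma~\ref{metric-Qq} identifies it $\Sy_q$-equivariantly with $Q_q(\tau \mathbb{RP}^m)$, a bundle over $\mathbb{RP}^m$. The assumption that $f$ has no coincident $q$-tuple on this subspace means the restriction of $f^{\times q}$ to $Q_q(\mathbb{RP}^m)$ lands in $F_q(\mathbb{R}^n)$, which gives a $\Sy_q$-equivariant map $Q_q(\mathbb{RP}^m) \to F_q(\mathbb{R}^n)$, hence an equivariant map into the representation sphere $S(A_q) \to S(A_q \otimes \mathbb{R}^n)$ type data; concretely (as in Lemma~\ref{inv-bundle}) it produces a nowhere-zero $\Sy_q$-equivariant section of the bundle $\alpha_q(Q_q(\mathbb{RP}^m)) \otimes$ (something of rank $n$), which forces the equivariant Euler class of $\alpha_q(Q_q(\mathbb{RP}^m))^\perp$ — equivalently a top-dimensional dual Stiefel--Whitney class — to vanish. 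So the goal reduces to showing that under the numerical hypotheses the relevant dual class is \emph{nonzero} in $H^*(P_q(\mathbb{RP}^m))$, giving the contradiction.

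The computation of that class is exactly what Lemmas~\ref{sw-sq}, \ref{aq-split}, and \ref{aq-perp-sw} provide. By Lemma~\ref{Qq-fro} applied to $\mathbb{RP}^m$ (so $N = N(m+1)$, and $p = N - m - 1 = \nu(m)$ in the notation of the excerpt), the $N$-th power map kills the reduced cohomology, so $w(\alpha_q^\perp) = w(\alpha_q)^{N-1}$ and Lemma~\ref{aq-perp-sw} expands this in the generators $h_{k_1\dots k_l} = (\Sqe^{l-1}c_1)^{k_1}\cdots c_l^{k_l}$ with coefficients given precisely by \eqref{aq-pow-sw}, which is the formula \eqref{aqm-coeff} in the statement with $N - 1 = p$. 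The key point is that here the base is $\mathbb{RP}^m$ rather than $\mathbb{R}^n$, so the bundle $\alpha_q(Q_q(\tau \mathbb{RP}^m))$ tensored against $\pi^*(\tau \mathbb{RP}^m)$ has a nowhere-zero section (Lemma~\ref{Qq-bundle-mul}, via the diagonal-to-zero section of $\xi^{\oplus q}$ restricted to $Q_q$), which is what forces the relations $(\Sqe^{l-i}c_i)^{?} = 0$; combined with $c_i^m = 0$-type facts this pins down the top nonzero part. One then checks that the monomial $h_{k_1\dots k_l}$ with the prescribed $k_i \le p - d - 1$ is nonzero in $\rH^*(P_q(\mathbb{RP}^m))$ — this is where the inequality $(2^l-1)p - k_1 - 2k_2 - \cdots - 2^{l-1}k_l \le m$ enters: it guarantees the total exponent stays below the nilpotence degree so the class survives, using Lemma~\ref{coh-free-module} (freeness of the $h$-module) together with Lemma~\ref{Sqe-nil} for the nilpotence bounds coming from $\barl(\tau \mathbb{RP}^m) = d$ and the nonvanishing of $\bar w_d$.

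The main obstacle is the last step: verifying that the specific monomial $h_{k_1\dots k_l}$ with the chosen coefficient $c(k_1,\dots,k_l)$ odd is not absorbed into the ideal $I_q H^*(\mathbb{RP}^m)$ and not killed by the nilpotence relations. One needs to know both the nilpotence degree of each $\Sqe^{l-i}c_i$ in $\rH^*(P_q(\mathbb{RP}^m))$ — which by Lemma~\ref{Sqe-nil} is $\dim \tau \mathbb{RP}^m + \barl(\tau\mathbb{RP}^m) = m + d = n$ (in the power-of-two case $n$), and via the relation from \cite{kar2010} quoted in the proof of Lemma~\ref{dim-plus-barl-pow2} one can rewrite excess powers in terms of $\bar w_d$-multiples — and that the relevant $\bar w_d(\tau\mathbb{RP}^m)^{q-1}$ is nonzero, which requires a separate binomial-coefficient computation for the tangent bundle of projective space. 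Assembling these, the hypothesis on $c(k_1,\dots,k_l)$ being odd together with the inequality guarantees a nonzero homogeneous component of the dual class of the right degree, contradicting the vanishing forced by the assumed coincidence-free map, and completing the proof.
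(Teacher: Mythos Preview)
Your argument has a genuine gap at the point where you derive the coefficient formula \eqref{aqm-coeff}. You claim it comes from Lemma~\ref{aq-perp-sw} with ``$N-1=p$'', but by definition $p=N(m+1)-m-1$, so $N-1=p+m$, not $p$. Substituting $N-1=p+m$ into \eqref{aq-pow-sw} does \emph{not} reproduce \eqref{aqm-coeff}, so your identification of the obstruction with $\bar w(\alpha_q)=w(\alpha_q)^{N-1}$ cannot be what produces the theorem's hypotheses. Relatedly, a nowhere-zero section of $\alpha_q\otimes\epsilon^n$ kills $e(\alpha_q)^n$, not the dual Stiefel--Whitney class of $\alpha_q$; these are different objects and you conflate them.

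The paper's route is different from yours. Citing \cite{kar2010}, the obstruction is the Euler class of $\alpha_q\otimes(\epsilon^n\oplus\tau M^\perp)$ over the \emph{product} space $\Po_q(\mathbb{R}^{m+p'})\times M$ (where $p'=\dim\tau M^\perp$), not over $P_q(\mathbb{RP}^m)$. Since $w(\tau M^\perp)=(1+c)^p$ with $c$ the generator of $H^1(\mathbb{RP}^m)$, the splitting principle gives this Euler class as $e(\alpha_q)^{n+p'-p}$ times the expansion of $w(\alpha_q)^p$ in the $h_{k_1\dots k_l}$, each term carrying the complementary power of $c$. This is precisely \eqref{aq-pow-sw} with $p$ substituted for $N-1$ (the remark following Lemma~\ref{aq-perp-sw}), and that is where the leading $\binom{p}{k_1}$ in \eqref{aqm-coeff} comes from --- the exponent $p$ in $(1+c)^p$, not from any nilpotence bound. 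The constraints then read off directly on the product: $k_i\le p-d-1$ from $(\Sqe^{l-i}c_i)^{m+p'}=0$ in $H^*(\Po_q(\mathbb{R}^{m+p'}))$ combined with the prefactor exponent $n+p'-p$, and the displayed inequality from $c^{m+1}=0$ in $H^*(\mathbb{RP}^m)$.

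There are knock-on errors in your final paragraph. You write $\barl(\tau\mathbb{RP}^m)=d$, but $d=n-m$ is the codimension of the target map and has nothing to do with the tangent bundle; in fact $\bar w(\tau\mathbb{RP}^m)=(1+c)^p$, so $\barl(\tau\mathbb{RP}^m)=p$. The nilpotence degree from Lemma~\ref{Sqe-nil} is therefore $m+p$, not $m+d=n$. Finally, $n$ is not assumed to be a power of two, so the appeal to ``the power-of-two case $n$'' and to the relation quoted in Lemma~\ref{dim-plus-barl-pow2} is unjustified here.
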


The result of~\cite{kar2010} follows from this theorem by putting $k_1=k_2=\dots = k_l=0$.

\begin{proof}
Put $M=\mathbb RP^m$ for brevity, and let $\tau M^\perp$ have dimension $p'$. It was shown in~\cite{kar2010} that a coincident $q$-tuple of $f:M\to \mathbb R^n$ from $Q_q(M)$ is guaranteed by the Euler class of the vector bundle $\alpha_q\otimes (\epsilon^n\oplus \tau M^\perp)$ over $\Po_q(\mathbb R^{m+p'})\times M$. It is well-known that the Stiefel--Whitney class of $\tau M^\perp$ is 
$$
w(\tau M^\perp) = (1+c)^p,
$$
where $c$ in the generator of $H^1(M)$ and $p=N(m+1)-m-1\le p'$. Then by Lemma~\ref{aq-perp-sw} (note the remark after it) we have the equation modulo $I_q H^*(M)$
\begin{multline*}
e(\alpha_q\otimes (\tau M^\perp\otimes \epsilon^n) ) = (\Sqe^{l-1} c_1\dots c_l)^{n+p'-p} \cdot\\\cdot\sum_{k_1,\dots, k_l\ge 0} c(k_1, \dots, k_l)(\Sqe^{l-1} c_1)^{k_1}\dots(c_l)^{k_l}\times c^{p(2^l-1) - k_1 - 2k_2 - \dots -2^{l-1}k_l},
\end{multline*}
where the coefficients $c(k_1,\ldots, k_l)$ are as in (\ref{aqm-coeff}). 

Now we note that in the cohomology $H^*(\Po_q(\mathbb R^{m+p'})\times M)$ we have relations 
$$
\forall i=1,\ldots, l\quad (\Sqe^{l-i} c_i)^{m+p'} = 0,\quad c^{m+1} = 0,
$$
that imply the inequalities $k_1,\ldots, k_l\le p - d - 1$ and
$$
(2^l-1)p - k_1 - 2 k_2 - \dots - 2^{l-1} k_l \le m
$$
respectively. Thus the result follows.
\end{proof}

\end{document}